 \numberwithin{equation}{section}
 \definecolor{db}{rgb}{0.0,0.0,0.8} 
\definecolor{dg}{rgb}{0.0,0.55,0.14}
\definecolor{dr}{rgb}{0.5,0,0.07}
\def\be{\begin{equation}}
\def\ee{\end{equation}}
\newcounter{step}
\def\ben{\begin{enumerate}}
\def\bena{\begin{enumerate}[a)]}
\def\een{\end{enumerate}}
\def\bit{\begin{itemize}}
\def\iit{\end{itemize}}
\def\dist{\operatorname{dist}}
\def\Lip{\operatorname{Lip}}
\def\tr{\operatorname{tr}}
\DeclareMathAlphabet{\mathonebb}{U}{bbold}{m}{n}
\def\R{{\mathbb R}}
\def\N{{\mathbb N}}
\def\Z{{\mathbb Z}}
\def\fo{\forall\, }
\def\va{\varphi}
\def\d{\displaystyle}
\def\im{\imath}
\def\ve{\varepsilon}
\def\p{\partial}
\def\l{\label}
\def\O{\Omega}
\def\na{\nabla}
\def\so{{\mathbb S}^1}
\theoremstyle{definition}
\newtheorem{theo}{Theorem}
\newtheorem{prop}{Proposition}[section]
\newtheorem{coro}[prop]{Corollary}
\newtheorem{lemm}[prop]{Lemma}
\theoremstyle{definition}
\theoremstyle{definition}
\newtheorem{rema}[prop]{Remark}
\theoremstyle{definition}
\def\beq{\begin{equation}}
\def\eeq{\end{equation}}
\def\be*{\begin{equation*}}
\def\ee*{\end{equation*}}
\def\epr{\end{proof}}
\def\bpr{\begin{proof}}
\def\wsp{W^{s,p}}
\def\O{\Omega}
\def\s1{{\mathbb S}^1}
\def\wsps1{\wsp (\O\, ; \s1)}
\def\R{{\mathbb R}}
\def\calr{\mathscr{R}}
\def\cinf{C^\infty(\overline\O\, ; \s1)}
\def\cinn{C^\infty(\overline\O\, ; N)}
\def\p{\partial}
\def\d{\displaystyle}
\def\wspn{W^{s, p}(\Omega\, ; N)}
\def\Z{\mathbb Z}
\date{\today }
\title{Density in $\wsp (\O ; N) $}
\author{Ha\"\i m Brezis$^{(1), (2)}$, Petru Mironescu$^{(3)}$\Envelope }
\begin{document}
\maketitle

\begin{abstract}
Let $\O$ be a smooth bounded domain in $\R^n$, $0<s<\infty$ and $1\le p<\infty$. We prove that $C^\infty(\overline\O\, ; \s1)$ is dense in $\wsps1$ except when $1\le sp<2$ and $n\ge 2$. The main ingredient is a new approximation method for $\wsp$-maps when $s<1$. With $0<s<1$, $1\le p<\infty$ and $sp<n$, $\O$ a ball, and $N$ a general compact connected manifold, we prove that $C^\infty(\overline\O\, ; N)$ is dense in $W^{s,p}(\O\, ; N)$   if and only if $\pi_{[sp]}(N)=0$. This supplements analogous results obtained by Bethuel when $s=1$, and by Bousquet, Ponce and Van Schaftingen when $s=2,3,\ldots$ [General domains $\O$ have been treated by Hang and Lin when  $s=1$; our approach allows to extend their result to $s<1$.] The case where $s>1$, $s\not\in\N$, is still open.
\end{abstract}

\section{Introduction}
\l{sec1}

Let $\O$ be a smooth bounded domain in $\R^n$, $n\ge 2$. [The questions we will consider are already interesting when $\O$ is a cube or a ball.] The first topic that we will address is whether  $\cinf$ is dense in $\wsps1$.
Here, $s>0$ and $1\le p<\infty$, and we let
\be*
W^{s,p}(\O ; \so)=\{ u\in W^{s,p}(\O ; \R^2);\, |u(x)|=1\text{ a.e.}\};
\ee*
for a set $N\subset\R^m$, we define  $W^{s,p}(\O ; N)$ similarly.

Of special interest to us is the case where $0<s<1$. Recall that in this case a standard norm on $W^{s,p}(\O)$ is $u\mapsto \|u\|_{L^p}+|u|_{\wsp}$, where
\be*
|u|_{\wsp}^p=\int_\O\int_\O \frac{|u(x)-u(y)|^p}{|x-y|^{N+sp}}\, dxdy.
\ee*

When $s>1$ is not an integer, we write $s=m+\sigma$, $m\in\N$, $0<\sigma<1$, and then a standard norm on $\wsp$ is $u\mapsto \|u\|_{L^p}+\|D^mu\|_{W^{\sigma, p}}$.

In this direction, our main result is the following.

\begin{theo}\label{thma}$\cinf$ is dense in $\wsps1$ when $sp<1$ or $sp\ge 2$.\\
If $1\le sp<2$, then $\cinf$ is not dense in $\wsps1$.
\end{theo}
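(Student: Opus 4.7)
The proof splits by the value of $sp$ into three regimes of rather different flavors. For $sp\ge 2$, I would reduce to the unconstrained case via lifting: every $u\in \wsps1$ can be written $u=e^{\im\va}$ with $\va\in\wsp(\O\, ;\R)$ (Bourgain--Brezis--Mironescu when $s<1$; classical lifting when $s\ge 1$). Since $C^\infty(\ov\O;\R)$ is dense in $\wsp(\O;\R)$ without constraint, pick $\va_k\to\va$ smooth and real-valued, and set $u_k:=e^{\im\va_k}$. Continuity of the Nemytskii operator $\va\mapsto e^{\im\va}$ on $\wsp$ (a standard composition estimate) yields $u_k\to u$ in $\wsp$, with each $u_k\in\cinf$.

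\textbf{Regime $sp<1$.} This is where the genuinely new work lies. After extending $u$ to $\R^n$, I would mollify to $u_\ve:=u*\rho_\ve$, smooth and $\R^2$-valued, and set $v_\ve:=u_\ve/|u_\ve|$ on the ``good set'' where $|u_\ve|\ge 1/2$. The bad set $E_\ve:=\{|u_\ve|<1/2\}$ has vanishing measure as $\ve\to 0$, and I would fill in $v_\ve$ over $E_\ve$ by a smooth $\s1$-valued map. The obstruction is topological: the trace of $v_\ve$ on a component of $\p E_\ve$ may carry nontrivial winding, and continuous $\s1$-valued extensions into the enclosed region then do not exist. The key new ingredient is a quantitative \emph{small dipole}: between two nearby points $a,b$ one constructs a smooth $\s1$-valued map on $\O\setminus\{a,b\}$ with prescribed winding around each point and $\wsp$-energy shrinking to $0$ as $|a-b|\to 0$, precisely because $sp<1$. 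Chaining such dipoles across $E_\ve$ cancels the boundary windings of $v_\ve$ at arbitrarily small cost and produces the desired smooth $\s1$-valued approximation.

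\textbf{Regime $1\le sp<2$.} Here I would produce an explicit obstruction. Set $u_0(x):=x_\bot/|x_\bot|$ where $x_\bot=(x_1,x_2)$; a direct Gagliardo calculation shows $u_0\in\wsps1$ since $sp<2$. Assume for contradiction that $u_k\in\cinf$ with $u_k\to u_0$ in $\wsp$. By Fubini applied to the Gagliardo seminorm (or trace theory when $s\ge 1$), the restrictions satisfy $u_k|_\Sigma\to u_0|_\Sigma$ in $\wsp(\Sigma;\s1)$ for a.e. two-plane $\Sigma$ parallel to the $(x_1,x_2)$-plane. Choose such a $\Sigma$ meeting the singular axis at one interior point; a small circle around it is traced by $u_0|_\Sigma$ with degree $1$, while the smooth $u_k|_\Sigma$ trace it with degree $0$. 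Since $sp\ge 1$, degree is stable under $\wsp$-convergence on the two-dimensional slice (Brezis--Nirenberg degree theory for VMO/Sobolev maps), yielding the required contradiction.

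The principal difficulty is the quantitative dipole in the $sp<1$ regime: one must exhibit explicit smooth $\s1$-valued bridges carrying a prescribed winding with $\wsp$-energy shrinking to $0$ with the bridge length. This is essentially the announced ``new approximation method for $\wsp$-maps when $s<1$,'' and it is the technical heart of the argument; the remaining two regimes reduce, modulo bookkeeping, to prior lifting and degree-theoretic results.
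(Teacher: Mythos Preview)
Your argument has a genuine gap, and it stems from swapping the roles of the two ``easy'' and ``hard'' regimes.

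\textbf{The $sp\ge 2$ regime.} Your reduction via lifting fails precisely in the case that is new in this paper: when $0<s<1$ and $2\le sp<n$, it is \emph{not} true that every $u\in\wsps1$ can be written as $u=e^{\im\va}$ with $\va\in\wsp(\O;\R)$. This failure of lifting is established in Bourgain--Brezis--Mironescu and is recalled explicitly in the introduction. (Lifting in $\wsp$ with $0<s<1$ holds iff $sp<1$ or $sp\ge n$.) So your proposed argument covers $s\ge 1$, $sp\ge 2$ (where the paper indeed uses a factorization $u=ve^{\im\va}$) and $sp\ge n$ (via embeddings), but leaves the central case $0<s<1$, $2\le sp<n$ completely open. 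The paper's route here is entirely different: it approximates $u$ in $\wsp$ by piecewise $j$-homogeneous maps $f_k$ (Theorem~\ref{thme}) which land in $W^{1,q}(\O;\so)$ for $q=sp$, and then invokes Bethuel--Zheng density of $\cinf$ in $W^{1,q}(\O;\so)$ together with the Gagliardo--Nirenberg embedding $W^{1,q}\cap L^\infty\subset\wsp$.

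\textbf{The $sp<1$ regime.} Conversely, you overcomplicate this case. Here lifting \emph{does} hold: every $u\in\wsps1$ is $e^{\im\va}$ with $\va\in\wsp$, and the approximation is immediate by mollifying $\va$. No dipole construction is needed. The ``new approximation method for $\wsp$-maps when $s<1$'' announced in the abstract is not a dipole argument for $sp<1$; it is the piecewise homogeneous approximation (Theorems~\ref{thmd} and \ref{thme}) used in the $2\le sp<n$ range.

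\textbf{The $1\le sp<2$ regime.} Your obstruction argument is essentially correct and matches the paper's: the map $(x_1,x_2)/|(x_1,x_2)|$ is in $\wsp$, and degree stability under $\wsp$ (hence VMO) convergence on a generic circle yields the contradiction.
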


\medskip
\noindent
Many special cases were already known (see the beginning of Section \ref{sec3}), but
the case where $n\ge 3$, $s<1$ and $2\le sp<n$ was left open (see \cite[Conjecture 2]{bm}).
This is an interesting and unusual situation where density holds and lifting fails; more precisely, there exists some $u\in\wsps1$ which cannot be written as $u=e^{\im\va}$ with $\va\in W^{s,p}(\O ; \R)$ \cite{bbm1}. 

The proof of Theorem \ref{thma}, which is presented in Section \ref{sec3}, relies on a new approximation result, valid only when $0<s<1$, which is discussed below (this is the content of Theorems \ref{thmd} and \ref{thme}). This original construction has its own interest and we believe that it might be useful in other contexts. An important  feature of Theorem \ref{thmd} is that it does not use any kind of smoothing or averaging. Hence it is especially appropriate in situations where maps take values into an arbitrary given set -- not necessarily a manifold.

\begin{rema}
\l{rem11}
A completely different proof of Theorem \ref{thma} for the case $n\ge 3$, $s<1$ and $2\le sp<n$ can be found in \cite{bmbook}. The main ingredient is the (non trivial) factorization theorem which asserts that each $u\in\wsps1$ can be written as $u=e^{\im\va}v$, with $\va\in W^{s,p}(\O ; \R)$ and $v\in W^{1,sp}(\O ; \so)$ \cite{mironescucras2}, \cite{bmbook}. 
\end{rema}

\begin{rema}
\label{r1a}
In the case where $1\le sp<2$, the reader may wonder what is the closure of $\cinf$ into $\wsp$. This question is answered in \cite{bmbook}. Roughly speaking, we are able to define a  distributional Jacobian $Ju$ for every $u\in\wsps1$ with $1\le sp<2$, and then
\be*
\overline{\cinf}^{\wsp}=\{ u\in\wsps1;\, Ju=0\}.
\ee*
This is the $\so$ fractional counterpart of a result of Bethuel  for maps in $H^1(B^3 ; {\mathbb S}^2)$ \cite{betdip}.

\end{rema}

\medskip
In the range $1\le sp<2$, the substitute of $\cinf$ for density purposes is the following class, inspired by the important work of Bethuel and Zheng \cite{bz} and Bethuel \cite{bet}:
\be*
\calr_{s,p} =\{ u\in\wsps1;\, u \text{ is smooth outside some finite union of }(n-2)- \text{manifolds}\}.
\ee*

For completeness, we recall the following known result.
\begin{theo}
\label{oldthm}
Let $n \ge 2$ and $s>0$. Assume that $1\le sp<2$. Then $\calr_{s,p}$ is dense in $\wsps1$.
\end{theo}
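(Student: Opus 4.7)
The plan is to adapt the grid-based construction of Bethuel \cite{bet} and Bethuel-Zheng \cite{bz} (who handle the integer case $s=1$) to the fractional setting $0<s<1$; the cases $s\geq 1$ non-integer can be treated by similar arguments or reduced to $s=1$ via standard embeddings. Given $u \in \wsps1$ and $\delta > 0$, the goal is to construct $u_\delta \in \calr_{s,p}$ with $\|u_\delta - u\|_{\wsp} < \delta$.

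First, I would fix a small scale $\eta > 0$ and consider a grid $\mathcal{G}_a$ of closed $n$-cubes of side $\eta$, depending on a translation parameter $a \in [0,\eta]^n$. By a Fubini-type argument in $a$, one selects a good position so that, for each $k \in \{1,\ldots,n-1\}$, the restriction of $u$ to the $k$-skeleton of $\mathcal{G}_a$ lies in the appropriate fractional Sobolev space of that skeleton, with the summed norms controlled (after scaling) by $\|u\|_{\wsp}$. Mollifying at scale $\eta$ in a tubular neighborhood of the $(n-1)$-skeleton and projecting the result onto $\s1$ (which is valid since $|u|=1$ forces the mollification to stay close to $\s1$ off a small exceptional set), I would replace $u$ by a map $\tilde u$ that is smooth and $\s1$-valued near the skeleton, within controlled $\wsp$-error.

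The heart of the construction is the cube-by-cube extension. On each $n$-cube $C$ we have smooth $\s1$-valued boundary data on $\partial C$. Since $\pi_j(\s1)=0$ for $j\neq 1$, the only topological obstruction lies on the $2$-faces of the grid, where the boundary trace on $\partial(2\text{-face})$ carries a $\Z$-valued degree. When the degree is nonzero, the homogeneous extension $x\mapsto \ov{u}(x/|x|)$ along the $2$-disk fibre, product-extended in the orthogonal $(n-2)$ directions, produces a singular $(n-2)$-dimensional affine subspace dual to the obstructed face; when the degree vanishes, the extension is smooth. Gluing these gives a global map in $\calr_{s,p}$.

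The main obstacle will be the norm estimate. The model vortex $x\mapsto x/|x|$ on $B^2_\eta\times(-\eta,\eta)^{n-2}$ has finite $\wsp$-seminorm precisely when $sp<2$, which is our hypothesis, and contributes $O(\eta^{n-sp})$ on each affected cube. Summing over the $O(\eta^{-n})$ cubes and decomposing the nonlocal double integral of the $\wsp$-seminorm into intra-cube, nearest-neighbour, and far-field contributions, one controls the last two by the skeletal regularity from the Fubini step together with the standard off-diagonal Hardy-type estimates intrinsic to $\wsp$ with $0<s<1$. Closing this estimate to obtain $o(1)$ as $\eta\to 0$, uniformly after an averaging in $a$, is the delicate technical core; once done, a diagonal extraction yields the desired $u_\delta$.
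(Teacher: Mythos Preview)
The paper does not prove this theorem; it recalls it as known, attributing $s=1$ to Bethuel and Bethuel--Zheng, $s>1$ to Bousquet \cite{pb}, and $0<s<1$ to the extension in \cite{bmbook} of the argument of \cite{bbm}. For $0<s<1$ the paper's own machinery does in fact provide an independent route: Theorem~\ref{thmb} with $N=\so$ gives density of maps continuous outside an $(n-2)$-skeleton, and the upgrade from ``continuous'' to ``smooth'' is addressed in a remark following that theorem. The engine is Lemma~\ref{lemb1} (piecewise $j$-homogeneous approximation converges in $\wsp$), whose proof fills Sections~\ref{appa}--\ref{appb} and is precisely the substitute for the ``delicate technical core'' you flag but do not supply.

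Two points in your sketch need correction beyond mere incompleteness. First, the vortex-energy heuristic does not close: $O(\eta^{n-sp})$ per affected cube times $O(\eta^{-n})$ cubes gives $O(\eta^{-sp})\to\infty$. What you actually need is a bound on $|u_\eta-u|_{\wsp}$, not on $|u_\eta|_{\wsp}$; the reason this is small after averaging over the grid translate has nothing to do with the size of a single vortex and everything to do with kernel estimates of the type in Lemmas~\ref{lema2}, \ref{lema7}, \ref{lemb2}, \ref{lemb4}. Second, the mollify-then-project step is not safe in this regime: for $sp<2$ the mollification of an $\so$-valued map can vanish (a vortex centered on the skeleton does exactly this), and you have neither a VMO embedding nor a good/bad cube dichotomy to control the exceptional set. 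Bethuel's original method resolves both issues simultaneously via the good/bad decomposition, which your outline omits; the paper's method sidesteps mollification altogether by extending homogeneously from the raw restriction and handling regularity on the skeleton as a separate step (Sections~\ref{appc}--\ref{appe}).
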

 
If $s=1$, Theorem  \ref{oldthm} was obtained by Bethuel and Zheng \cite{bz} when $n=2$ and by Bethuel \cite{bet} when $n\ge 3$. Other special cases were treated by Hardt, Kinderlehrer and Lin \cite{hkl} and by Rivi\`ere \cite{r}. In \cite{bbm}, Theorem \ref{oldthm} was proved for  $\d s=\frac 12$ and $p=2$; the argument in \cite{bbm} extends readily to the full range $0<s<1$, $1\le sp<2$; this is done in \cite{bmbook}. Finally, when $s>1$ Theorem \ref{oldthm}   was established by Bousquet \cite{pb}.

\medskip
We next consider the more general situation where the target space $\s1$ is replaced by a  compact connected manifold $N$ without boundary, embedded in $\R^m$.
To start with, we prove that when $n=1$,   $C^\infty(\overline\O\, ; N)$ is always dense in $\wsp (\O\, ; N)$; see Corollary \ref{n=1}.
Our main result in Section \ref{sec4} is a fractional version of a remarkable result of Bethuel \cite{bet}, which asserts that, when $n\ge 2$ and $1\le p<n$, the class
\be*
\calr_{1,p} =\{ u\in W^{1,p}(\O\, ; N);\, u \text{ is smooth outside some finite union of }(n-[p]-1)- \text{manifolds}\}
\ee*
is dense in $W^{1,p}(\O ; \, N)$ (with $[\ ]$ denoting the integer part).
When $0<s<1$, we prove 
\begin{theo}
\label{thmb}
Assume that $n\ge 2$, $0<s<1$ and $sp<n$. Then 
\be*
{\mathscr R}_{s, p}=\{u\in W^{s,p}(\Omega\, ; N)\ ;\ u\text{ is continuous outside a finite union of }(n-[sp]-1)-\text{ manifolds}\}
\ee*
is dense in $W^{s,p}(\Omega\, ; N)$.
\end{theo}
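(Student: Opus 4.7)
\medskip
\noindent\emph{Proof proposal.} The plan is to adapt Bethuel's grid construction (developed for $s=1$ in \cite{bet}) to the fractional range $0<s<1$, with the new approximation method of Theorems \ref{thmd}--\ref{thme} substituting for the convolution/averaging tools which are unavailable when the target is a general manifold $N$. Fix $\varepsilon>0$. I work on a slightly enlarged domain $\O'\supset\O$ and introduce a grid $\mathcal{K}_\varepsilon$ of closed cubes of side $\varepsilon$. For $j=0,\dots,n$ denote by $K^j_\varepsilon$ the $j$-skeleton of $\mathcal{K}_\varepsilon$, and let $L^{n-j-1}_\varepsilon$ be the dual $(n-j-1)$-skeleton, obtained by connecting the centres of complementary faces. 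The goal is, for each $\varepsilon$, to produce $u_\varepsilon\in \mathscr R_{s,p}$, continuous outside $L^{n-[sp]-1}_\varepsilon\cap\O$, such that $u_\varepsilon\to u$ in $\wspn$.

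\medskip
The construction proceeds cube by cube. First, apply Theorems \ref{thmd}--\ref{thme} at scale $\varepsilon$ to obtain a preliminary approximation $\widetilde u$ of $u$ in $\wspn$ which is well-adapted to the grid (in particular, whose Gagliardo energy is controlled cube-by-cube without any smoothing or averaging, so that values remain in $N$). Classify each cube $Q\in\mathcal{K}_\varepsilon$ as \emph{good} if, in a suitable averaged sense, $\widetilde u\vert_Q$ takes values in a small tubular neighbourhood $\mathcal{N}$ of a single point of $N$, and as \emph{bad} otherwise. A standard Chebyshev-type argument based on the Gagliardo seminorm gives that the number of bad cubes is $o(\varepsilon^{[sp]-n})$, so that the bad region has $(n-[sp])$-Minkowski content tending to zero. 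On good cubes one keeps $\Pi\circ\widetilde u$, where $\Pi\colon\mathcal{N}\to N$ is the smooth nearest-point retraction. On bad cubes, one replaces $\widetilde u$ by the $0$-homogeneous extension from the $[sp]$-skeleton of $Q$ toward the centre of $Q$, which produces at most a $(n-[sp]-1)$-dimensional singularity (the dual skeleton intersected with $Q$) and leaves $\widetilde u$ continuous elsewhere in $Q$.

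\medskip
The compatibility across $(n-1)$-faces is obtained by performing all constructions \emph{starting} from the common $[sp]$-skeleton: the values on $K^{[sp]}_\varepsilon$ are fixed first (using Theorem \ref{thme} to make sense of the restriction of $\widetilde u$ to $K^{[sp]}_\varepsilon$ even though $s<1$ precludes classical traces), then extended dimension by dimension to $K^{[sp]+1}_\varepsilon,\dots,K^n_\varepsilon$, either by smooth projection on good cubes or by $0$-homogeneous extension on bad cubes. Convergence $u_\varepsilon\to u$ is then checked by splitting the Gagliardo double integral into near-diagonal and far-from-diagonal parts, and, in the near-diagonal part, into contributions from pairs of points lying in the same cube versus in distinct cubes; on good cubes one uses the Lipschitz character of $\Pi$, on bad cubes one uses scaling estimates for the $0$-homogeneous extension together with the smallness of the bad set.

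\medskip
The main obstacle is the construction on bad cubes: when $s=1$ one can use the trace of $u$ on the faces of $Q$ to control the homogeneous extension, but for $s<1$ such traces do not exist in general. This is precisely the point where Theorems \ref{thmd}--\ref{thme} are essential, since they allow to replace $u$ by a pre-approximation whose restriction to every face of the grid carries a meaningful $W^{s,p}$-norm and whose full Gagliardo seminorm is recovered as the sum of these face contributions plus a controlled remainder. A secondary, but still delicate, obstacle is obtaining the correct scaling of the singular contributions in terms of $\varepsilon$: this requires a careful Fubini-type choice of the grid translation so as to avoid exceptional configurations, combined with a diagonal extraction $\varepsilon_k\to 0$ at the end to pass from the approximating family $\{u_{\varepsilon_k}\}$ to an element of $\mathscr R_{s,p}$ converging to $u$.
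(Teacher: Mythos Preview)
Your proposal takes a genuinely different route from the paper, and one that is both much harder and, as you yourself acknowledge, not fully worked out. You attempt to redo Bethuel's good/bad cube construction in the fractional setting, using Theorems~\ref{thmd}--\ref{thme} as a tool to ``make sense of restrictions to the $[sp]$-skeleton''. This is not what those theorems provide, and it misses the point of the paper's argument.

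The paper's proof is much simpler and runs as follows. First extend $u$ to $f\in W^{s,p}(\omega;N)$. If $sp<1$, take $j=0$ in Theorem~\ref{thmd}: the approximations $f_k$ are piecewise constant, hence already in $\mathscr R_{s,p}$, and we are done. If $1\le sp<n$, take $j=[sp]$ in Theorem~\ref{thme}: the $f_k$ are Lipschitz on the $[sp]$-skeleton, and the gradient estimate~\eqref{g1,1} then yields $f_k\in W^{1,q}(\Omega;N)$ for every $q<[sp]+1$. Pick any $q$ with $sp<q<[sp]+1$, so that $[q]=[sp]$. Now invoke Bethuel's $W^{1,q}$ result \cite{bet}: $\mathscr R_{1,q}$ is dense in $W^{1,q}(\Omega;N)$. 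Since $\mathscr R_{1,q}\subset\mathscr R_{s,p}$ and the Gagliardo--Nirenberg embedding $W^{1,q}\cap L^\infty\hookrightarrow W^{s,p}$ is continuous in the sense of~\eqref{gn2}, we get
\[
W^{s,p}(\Omega;N)\subset\overline{W^{1,q}(\Omega;N)}^{\,W^{s,p}}\subset\overline{\mathscr R_{s,p}}^{\,W^{s,p}}.
\]
No good/bad classification, no cube-by-cube construction, no Chebyshev counting of bad cubes.

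There is also a concrete inconsistency in your sketch. You write that on good cubes one keeps $\Pi\circ\widetilde u$, with $\Pi$ the nearest-point retraction onto $N$; but the approximations $\widetilde u=f_k$ delivered by Theorems~\ref{thmd}--\ref{thme} are already $N$-valued (this is precisely why those theorems avoid any smoothing or averaging), so there is nothing to project and the good/bad dichotomy as you describe it is vacuous. The good/bad machinery only makes sense when the preliminary approximation has left $N$ (as after mollification in Bethuel's original argument), and the paper explicitly remarks that it is \emph{unclear} whether such a mollification-based approach can be made to work for general $0<s<1$; see the remark concerning Mucci~\cite{mucci}.
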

\begin{rema}
Let $n\ge 2$ and $s>0$. Assume that either $sp<1$ or $sp\ge n$. Then $C^\infty(\overline\O ; N)$ is dense in $\wsp (\O ; N)$. For the case $sp<1$, see Section \ref{sub32}; the case $sp\ge n$ is handled as in \cite{su}, \cite{bn}. On the other hand, given any $s>0$ and $p\ge 1$ such that $1\le sp<n$, there exists some manifold $N$ such that $C^\infty(\overline\O ; N)$ is not dense in $\wsp (\O ; N)$; it suffices to take $N={\mathbb S}^{[sp]}$ and apply Theorem \ref{thmc} below. 
\end{rema}
\begin{rema}
With more work, it is possible to improve the conclusion of Theorem \ref{thmb} by replacing, in the definition of the class ${\mathscr R}_{s, p}$, \enquote{$u$ continuous} by \enquote{$u$ smooth}. This requires a smoothing procedure. Such a procedure with $s=1$ (in the spirit of the proof of the $H=W$ theorem of Meyers and Serrin) is described in \cite{brezisli}. This can be adapted to arbitrary $s$, but  will not be detailed  here.
\end{rema}
\begin{rema}
When $1<p<\infty$ and $s=1-\d\frac 1p$, Theorem \ref{thmb} was proved by Mucci \cite{mucci}, using a method inspired by Bethuel \cite{bet} and completely different from ours. It is not clear whether this kind of method might lead to a proof of Theorem \ref{thmb}. 
\end{rema}

Recall the following result due to Bethuel \cite{bet}: 
Assume that $\O$ is a ball (or a cube). For $p<n$, $\cinn$ is dense in $W^{1,p}(\Omega\, ; N)$ if and only if $\pi_{[p]}(N)=0$. The extension of this result to $s=2,3,\ldots$ can be found in Bousquet, Ponce and Van Schaftingen \cite{bpvs}.
A partial analog in our situation is
\begin{theo}
\label{thmc}
Assume  that $0<s<1$,  $sp<n$ and that $\Omega$ is a ball. Then $\cinn$ is dense in $\wspn$ if and only if $\pi_{[sp]}(N)=0$.
\end{theo}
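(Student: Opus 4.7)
My plan is to prove the two implications separately. The \emph{if} direction reduces via Theorem \ref{thmb} to approximating $\mathscr R_{s,p}$-maps by smooth $N$-valued maps, using a topological extension across small tubular neighborhoods of the singular set. The \emph{only if} direction relies on an explicit radially-extended bad map and a $\mathrm{VMO}$-type homotopy invariant that is preserved under $W^{s,p}$-convergence.

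For density when $\pi_{[sp]}(N)=0$: by Theorem \ref{thmb} it is enough to approximate any $u\in\mathscr R_{s,p}$ by maps in $\cinn$. Such a $u$ is continuous off a finite union $\Sigma$ of $(n-[sp]-1)$-manifolds. Take a $\delta$-tubular neighborhood $U_\delta$ of $\Sigma$ in $\ovo$. A generic transverse slice of $U_\delta$ is a $([sp]+1)$-ball whose boundary $[sp]$-sphere inherits a continuous map into $N$ from $u$. Since $\pi_{[sp]}(N)=0$, this boundary map is null-homotopic fiberwise, and the standard skeleton-by-skeleton obstruction argument (in the spirit of Bethuel \cite{bet}, adapted to fractional regularity) produces a continuous $N$-valued modification $\tilde u_\delta$ agreeing with $u$ outside $U_\delta$. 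To see that $\tilde u_\delta\to u$ in $\wspn$ as $\delta\to 0$, I split $|\tilde u_\delta-u|_{\wsp}^p$ into contributions over $U_\delta\times U_\delta$, $U_\delta\times(\O\setminus U_\delta)$, and $(\O\setminus U_\delta)\times(\O\setminus U_\delta)$: the first tends to $0$ by absolute continuity, the third is identically $0$, and the cross term is controlled by a fractional Hardy-type inequality exploiting that $\dim\Sigma = n-[sp]-1$ gives $\Sigma$ small enough $\wsp$-capacity. Finally, mollification followed by nearest-point projection onto $N$ smooths $\tilde u_\delta$ while keeping values in $N$.

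For non-density when $\pi_{[sp]}(N)\neq 0$: set $k=[sp]$ and choose a continuous $\omega\colon \S^k\to N$ representing a non-trivial class in $\pi_k(N)$. Decompose $x=(x',x'')\in\R^{k+1}\times\R^{n-k-1}$, take $\O$ to be a ball centered at the origin, and set $u(x):=\omega(x'/|x'|)$; a direct computation using $sp<k+1$ shows $u\in\wspn$. Suppose, for contradiction, that $u_j\in\cinn$ converge to $u$ in $\wspn$. By Fubini, for almost every small $r>0$ and almost every $x''$, both $u$ and $u_j$ have traces on the $k$-sphere $\Sigma_{r,x''}=\{|x'|=r\}\times\{x''\}$ satisfying $u_j\to u$ in $W^{s-1/p,p}(\Sigma_{r,x''})$, and (away from the endpoint $sp=k$) this fractional space embeds into $\mathrm{VMO}(\Sigma_{r,x''})$. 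By the Brezis--Nirenberg theory of $\mathrm{VMO}$ maps into $N$, the associated element of $\pi_k(N)$ is continuous under $\mathrm{VMO}$-convergence; each $u_j|_{\Sigma_{r,x''}}$ extends smoothly to the enclosed $(k+1)$-disk and is therefore null-homotopic, while $u|_{\Sigma_{r,x''}}$ is a reparametrization of $\omega$, yielding the contradiction.

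The main obstacle I anticipate is the $\wsp$-control in the \emph{if} direction: the non-locality of the fractional seminorm means that a modification of $u$ on a shrinking neighborhood of $\Sigma$ is not automatically $\wsp$-small, and since $\dim\Sigma = n-[sp]-1$ is exactly at the borderline of positive $\wsp$-capacity, the cross-term estimate requires a delicate two-parameter construction (a thin shell of width $\delta$, plus a smoothing scale) analogous to those in Bousquet--Ponce--Van Schaftingen \cite{bpvs}. A secondary technical point is the endpoint $sp=k$ in the \emph{only if} direction, where traces on $k$-spheres are not given by the classical trace theorem; a preliminary mollification of the approximating $u_j$ together with averaging over radii $r$ should circumvent this.
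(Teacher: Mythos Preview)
Your \emph{only if} argument matches the paper's, with one slip: the Fubini-type slicing onto the $k$-sphere $\Sigma_{r,x''}$ gives convergence in $W^{s,p}(\Sigma_{r,x''})$, not $W^{s-1/p,p}$ (you are restricting, not taking a trace across a hypersurface). Since $k=[sp]\le sp$, the space $W^{s,p}$ on a $k$-manifold embeds into $C^0$ when $sp>k$ and into $\mathrm{VMO}$ when $sp=k$, and the homotopy-stability argument goes through exactly as in the paper's Section~\ref{sec3}, item a).

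Your \emph{if} direction is a genuinely different route, and the gap you flag is real. You start from Theorem~\ref{thmb} and try to fill in the singular set directly at the fractional level; the paper instead uses Theorem~\ref{thme} to produce approximants $f_k$ that are Lipschitz on the $[sp]$-skeleton and hence lie in $W^{1,q}(\Omega;N)$ for any $q<[sp]+1$. Choosing $sp<q<[sp]+1$ gives $[q]=[sp]$, so Bethuel's $W^{1,q}$ density theorem (valid since $\pi_{[q]}(N)=0$ and $\Omega$ is a ball) applies, and the Gagliardo--Nirenberg embedding $W^{1,q}\cap L^\infty\hookrightarrow W^{s,p}$ transports the approximation back. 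This bootstrap completely sidesteps the fractional Hardy/capacity estimate you would need. In your approach, the step ``$|\tilde u_\delta-u|_{W^{s,p}(U_\delta\times U_\delta)}\to 0$ by absolute continuity'' is not justified: absolute continuity controls the energy of $u$ on $U_\delta$, but $\tilde u_\delta$ is a new map on $U_\delta$ whose $W^{s,p}$ energy there is not a priori small---one needs a quantitative extension with controlled oscillation, which is precisely the delicate construction you defer to \cite{bpvs}. Your strategy can be made to work (it is essentially the Bethuel/BPVS scheme), but the paper's reduction to the integer-order case via Theorem~\ref{thme} is substantially shorter. Note also that the paper treats $sp<1$ separately (piecewise constant approximants, values joined by a curve in $N$); your outline does not single this case out.
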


For special target manifolds $N$, Theorem \ref{thmc} was obtained  by Bousquet, Ponce and Van Schaftingen \cite{bpvs2}.

When $\O$ is more complicated, one may still give  necessary and sufficient conditions for the density of $\cinn$ in $\wspn$. Indeed, when $s=1$ such conditions (depending on $[p]$) were discovered by Hang and Lin \cite[Theorem 6.3]{hl}. The proof of Theorem \ref{thmc} shows that the same conditions govern the  case $s<1$, provided we replace $[p]$ by $[sp]$.

Two natural questions remain open:

\noindent
{\bf Open Problem 1.}
Assume that $s>1$ is not an integer and that $sp<n$. Is it true that $\mathscr R_{s,p}$ is dense in $\wspn$?

By Theorem \ref{oldthm}, the answer is positive when $N=\s1$. This is also the case when $N$ is arbitrary and  $s=2,3,\ldots$ (Bousquet, Ponce and Van Schaftingen \cite{bpvs}). 
However, the general case is still open even for simple targets such as $N={\mathbb S}^2$.

\noindent
{\bf Open Problem 2.}
Assume  that $s>1$  is not an integer,  $sp<n$ and that $\Omega$ is a ball. Is it true that $\cinn$ is dense in $\wspn$ if and only if $\pi_{[sp]}(N)=0$?

\bigskip
\noindent
{\bf The main idea for the proof of Theorem \ref{thma}.} 
We describe here, without proof, the basic tool, namely approximation by piecewise $j$-homogeneous maps. 

For simplicity, we explain our construction first in $3$-d.
Let $Q=[-1,1]^3$ and let $g:\p Q\to\R^m$. We may extend $g$ to a map $h:Q\to\R^m$ through the formula  $h(x)=\d g\left(\frac x{|x|}\right)$, where $|\ |$ stands for the sup norm. The map $h$ is the  \enquote{homogeneous} extension of $g$.\\
Let now $K$ be the $1$-dimensional skeleton (=union of edges) of $Q$ and let $g:K\to\R^m$. One may extend $g$ to $Q$ in two steps: first, by homogeneous extension on each face of $\partial Q$, next by homogeneous extension from $\p Q$ to $Q$. This extension will be again called \enquote{homogeneous}.\\
Similarly, given a map defined on the $0$-skeleton (=union of vertices) of $Q$, one may extend it in  three steps  \enquote{homogeneously}  to $Q$.\\
More generally, if $K$ is the $j$-skeleton of the cube $Q=[-1,1]^n$ and $g:K\to\R^m$, then $g$ has a  \enquote{homogeneous} extension $h:Q\to\R^m$, obtained in $(n-j)$ steps. Such a map will be called $j$-homogeneous.\\
One can also consider the more general situation where the cube is replaced by a finite mesh ${\mathscr C}=\d \cup_iQ_i$ and extend maps defined on the $j$-skeleton of ${\mathscr C}$ to \enquote{piecewise $j$-homogeneous} maps on ${\mathscr C}$.

\medskip
We may now state our main approximation result.

Let $F\subset\R^m$ be an arbitrary set, $0<s<1$, $sp<n$.  
Let $\O$, $\omega$ be two smooth open bounded subsets of $\R^n$ such that $\overline\O\subset\omega$ and let $f\in\wsp (\omega\, ; F)$.
\begin{theo}
\label{thmd}
Assume that $0<s<1$ and $sp<n$. Let $j$ be an integer such that $[sp]\le j\le n-1$. Then there exists a sequence $\{ {\mathscr C}^k\}$ of finite meshes, such that $\overline\O$$\subset$${\mathscr C}^k$$\subset\omega$, and a 
sequence  of maps $f_k:{\mathscr C}^k\to F$ such that:
\begin{enumerate}[a)]
\item
Each $f_k$ is piecewise $j$-homogeneous on ${\mathscr C}^k$, i.e., $f_k$ is the  $j$-homogeneous extension of its restriction to the $j$-dimensional skeleton 
${\mathscr S}^k$ 
of ${\mathscr C}^k$.
\item
 Each $f_k$ belongs to $\wsp ({\mathscr C}^k ; F)$.
 \item
$f_k\to f$ in $\wsp (\O)$ as $k\to\infty$.
\end{enumerate}
\end{theo}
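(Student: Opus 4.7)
The plan is to construct $f_k$ as the piecewise $j$-homogeneous extension of the values of $f$ on the $j$-skeleton of a suitably chosen cubic mesh, whose size tends to $0$. The two analytic ingredients are (i) a ``good mesh'' selection via a Fubini-type estimate on the Gagliardo double integral, and (ii) a conical extension lemma asserting that if $K$ is a cube of dimension $r$ and $g\in\wsp(\p K;\R^m)$, then $h(x):=g(\pi(x))$, with $\pi$ the central projection of $K$ onto $\p K$, satisfies $|h|^p_{\wsp(K)}\le C\, |g|^p_{\wsp(\p K)}$ as soon as $sp<r$. The hypothesis $j\ge[sp]$ enters precisely here: it ensures $sp<r$ at each level $r\in\{j{+}1,\ldots,n\}$ of the iterated extension from the $j$-skeleton up to the full cube.

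\textbf{Construction.} Fix a sequence $\ve_k\to 0$ and consider the family of cubic meshes $\mathscr{C}_a^{\ve_k}$ of side $\ve_k$ parameterized by a translation $a\in[0,\ve_k]^n$, with $j$-skeleton $\mathscr{S}_a^{\ve_k}$. A Fubini computation, averaging both $\|f\|^p_{L^p(\mathscr{S}_a^{\ve_k})}$ and $|f|^p_{\wsp(\mathscr{S}_a^{\ve_k})}$ over $a$, shows that for an appropriate choice $a=a_k$ the Lebesgue representative of $f$ is well defined on $\mathscr{S}^k:=\mathscr{S}_{a_k}^{\ve_k}$ and $f|_{\mathscr{S}^k}\in\wsp(\mathscr{S}^k;F)$ with controlled seminorm. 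On each cube $Q$ of $\mathscr{C}^k:=\mathscr{C}_{a_k}^{\ve_k}$ I would then define $f_k|_Q$ by iterated conical extension of $f|_{\mathscr{S}^k\cap\ov Q}$: extend first over each $(j{+}1)$-face by coning from its boundary, then over each $(j{+}2)$-face, and so on up to $Q$ itself. Because every value of $f_k$ is by construction a value of $f$ at some point of $\mathscr{S}^k$, the target constraint $f_k(x)\in F$ is automatic and no smoothing of $f$ is ever performed; this is what makes the construction compatible with an arbitrary target set.

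\textbf{Sobolev regularity and uniform bound.} Applying the conical extension lemma at each level $r\in\{j{+}1,\ldots,n\}$ yields cube-wise estimates of the type $|f_k|^p_{\wsp(Q)}\le C\, |f|^p_{\wsp(\mathscr{S}^k\cap\ov Q)}$, together with an $L^p$ analogue. Summing over the cubes of $\mathscr{C}^k$ and absorbing the cross-cube Gagliardo contributions (pairs $x\in Q_i$, $y\in Q_j$ with $i\ne j$) into the Gagliardo integral of $f$ on a slightly enlarged neighbourhood in $\omega$, I expect the uniform bound $\|f_k\|_{\wsp(\mathscr{C}^k)}\le C\,\|f\|_{\wsp(\omega)}$, together with $f_k\in\wsp(\mathscr{C}^k;F)$, which is part (b).

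\textbf{Convergence and the main obstacle.} $L^p$-convergence $f_k\to f$ on $\O$ follows from Lebesgue differentiation, since each $f_k(x)$ is a value of $f$ at a point of $\mathscr{S}^k\cap\ov Q$ at distance at most $\ve_k$ from $x$. For the Gagliardo seminorm of $f-f_k$ on $\O$, I would split the integrand into the regions $|x-y|\ge\delta$ and $|x-y|<\delta$ for a small parameter $\delta>0$. The large-distance part is bounded by $C\delta^{-n-sp}\|f-f_k\|^p_{L^p(\O)}$, which goes to $0$ as $k\to\infty$ for fixed $\delta$. The small-distance part should be controlled, uniformly in $k$, by $C\iint_{|x-y|<C\delta,\;\omega\times\omega}|f(x)-f(y)|^p\,|x-y|^{-n-sp}\,dxdy$, which tends to $0$ as $\delta\to 0$ by absolute continuity of the Gagliardo integrand of $f$. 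Taking first $k\to\infty$, then $\delta\to 0$, yields (c). The main obstacle is this uniform-in-$k$ small-distance bound: it demands a delicate cube-wise estimate on the iterated cone extension for pairs $(x,y)$ which are close to each other but straddle a face of $\mathscr{S}^k$, and it is in this step that the strict inequality $sp<j{+}1$ is used in an essential way. Since $F$ is only a set, no smoothing or averaging of $f$ is available, so the whole argument must stay inside the class of piecewise $j$-homogeneous $F$-valued maps: this is what makes the final cube-wise estimate the technical heart of the theorem.
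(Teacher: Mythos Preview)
Your construction of $f_k$ as the iterated homogeneous extension of the restriction of $f$ to the $j$-skeleton of a cubic mesh is exactly the paper's construction, and your observation that no smoothing is performed (so values stay in $F$) is the right one. The gap is in the convergence argument.

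The $\delta$-splitting you propose does not work as stated. If you bound the short-range part by
\[
C\iint_{|x-y|<\delta}\frac{|f(x)-f(y)|^p}{|x-y|^{n+sp}}\,dxdy + C\iint_{|x-y|<\delta}\frac{|f_k(x)-f_k(y)|^p}{|x-y|^{n+sp}}\,dxdy,
\]
the second term is \emph{not} controlled, uniformly in $k$, by the $\delta$-truncated Gagliardo energy of $f$: once $\ve_k\ll\delta$, this term is essentially the full Gagliardo seminorm of $f_k$, which converges to $|f|_{W^{s,p}}^p$ and therefore cannot tend to $0$ with $\delta$. So any uniform short-range estimate must use cancellation in $f-f_k$, not the triangle inequality; and carrying this out is precisely the content of the theorem, not a preliminary step.

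The paper's approach is genuinely different from your scheme of ``fix $\ve_k\to 0$, then choose a good translation by Fubini''. One estimates directly
\[
\frac{1}{\ve^n}\int_{Q_\ve}\|f-f_{T,\ve}\|_{W^{s,p}}^p\,dT\le a(\ve)+b(\ve),
\]
where $a(\ve)\to 0$ but $b(\ve)$ does \emph{not}; it only satisfies $\int_0^1 b(\ve)/\ve\,d\ve<\infty$. The terms making up $b(\ve)$ (arising from pairs $X,Y$ in adjacent cubes, straddling a face) genuinely fail to vanish at fixed scale, and one must average over $\ve$ as well to extract a subsequence $\ve_k$ along which $a(\ve_k)+b(\ve_k)\to 0$. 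In other words, a generic sequence of scales will not work, and the good-translation Fubini argument alone is insufficient. Your conical extension lemma (continuity of $g\mapsto h$) is correct and is used in the paper, but only for Theorem~\ref{thme}; it gives a bound with a mesh-dependent constant and does not by itself yield convergence in Theorem~\ref{thmd}.
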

%
\noindent
When $j=n-1$, the main ingredient in the proof of Theorem \ref{thmd} is presented in Section \ref{appa}; Section \ref{appb} treats the case where $j\le n-2$ and contains the proof of Theorem \ref{thmd}.\\
Of special interest to us will be the case where $j=[sp]$. When $sp$ is not an integer,  the restriction of $\d {f_k}$ to ${{\mathscr S}^k}$ is continuous. In particular, each $f_k$  is continuous on ${\cal C}^k$ outside some finite union of $\ell$-dimensional cubes, with $\ell=n-[sp]-1$. This need not be the case when $sp$ is an integer.\\
 When $F$ is a compact manifold and $j=[sp]$, Theorem \ref{thmd} can be considerably improved:
\begin{theo}
\label{thme}
Assume that $0<s<1$, $1\le sp<n$ and that $F$  is  a compact manifold without boundary. Let $j=[sp]$. Then there exist  sequences $\{ {\mathscr C}^k\}$ and $\{f_k\}$ such that a)-c) hold and, in addition,
\begin{enumerate}[a)]
\setcounter{enumi}{3}
\item
For each $k$, the restriction to ${\mathscr S}^k$ of  ${f_k}$ is Lipschitz.
\end{enumerate}
\end{theo}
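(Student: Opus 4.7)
The plan is to obtain $\tilde f_k$ by first applying Theorem \ref{thmd} with $j=[sp]$ to get piecewise $j$-homogeneous approximations $f_k$ of $f$, and then perturbing each $f_k$ by replacing its restriction $g_k := f_k|_{\mathscr S^k}$ with a Lipschitz map $\tilde g_k:\mathscr S^k\to F$ close to $g_k$ in $W^{s,p}(\mathscr S^k)$. The new $\tilde f_k$ is defined as the piecewise $j$-homogeneous extension of $\tilde g_k$, so it automatically inherits a), b), c) and, by construction, satisfies d).

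The key approximation ingredient is face-by-face density of Lipschitz $F$-valued maps in $W^{s,p}(\sigma; F)$ on each $j$-face $\sigma$ of $\mathscr S^k$. Since $sp \ge j = \dim \sigma$, this is classical Schoen--Uhlenbeck-type density in the supercritical ($sp > j$) or critical ($sp = j$) regime: one extends $g_k|_\sigma$ as an $\R^m$-valued $W^{s,p}$ map, mollifies at a small scale, and projects via the nearest-point retraction $\pi$ onto $F$ in a tubular neighborhood. In the supercritical case, the embedding $W^{s,p}(\sigma) \hookrightarrow C^0$ makes $\pi$ well defined on the mollifier; in the critical case one invokes $W^{s,p}\hookrightarrow \mathrm{VMO}$ together with the Brezis--Nirenberg argument to guarantee that the mollifier is $L^\infty$-close to $F$.

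To stitch the face-wise approximations into a map that is globally Lipschitz on $\mathscr S^k$, I would proceed by induction on the dimension of the cells of $\mathscr S^k$: choose $\tilde g_k$ at each vertex as a point of $F$ close to $g_k$, extend to each $1$-cell as a Lipschitz arc in $F$ joining the prescribed endpoints while approximating $g_k|_{\text{edge}}$, and continue dimension by dimension up to $j$. At each step, extending a boundary-Lipschitz map with $W^{s,p}$ interior data to a full $i$-cell Lipschitz map close to $g_k$ is possible via a collar-based Lipschitz extension of the boundary data glued to a mollify-plus-project approximation in the interior, exploiting that $F$ is an absolute neighborhood retract.

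Taking $\tilde f_k$ as the piecewise $j$-homogeneous extension of $\tilde g_k$ completes the construction. Stability of this extension operator in the $W^{s,p}$ topology, a key estimate already developed in Sections \ref{appa}--\ref{appb} for Theorem \ref{thmd}, ensures $\|\tilde f_k - f_k\|_{W^{s,p}(\mathscr C^k)}\to 0$ provided $\tilde g_k\to g_k$ sufficiently fast in $W^{s,p}(\mathscr S^k)$; combined with $f_k\to f$ this yields $\tilde f_k\to f$ in $W^{s,p}(\Omega)$. The principal technical obstacle lies in the inductive matching across shared lower-dimensional faces, particularly in the critical case $sp = j$ where $g_k$ is only VMO rather than continuous on the skeleton, which forces careful, dimension-by-dimension use of the manifold structure of $F$ to obtain the prescribed-boundary Lipschitz extensions at each intermediate level $0,1,\ldots,j-1$.
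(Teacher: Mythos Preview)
Your high-level strategy---approximate the skeleton datum $g_k=f_k|_{\mathscr S^k}$ by Lipschitz $F$-valued maps via an induction on the cell dimension, then pass to the piecewise $j$-homogeneous extension using continuity of that operator---is exactly the paper's route (Corollary \ref{c1aa} + Lemma \ref{ua1} + Lemma \ref{cont1}). However, there is a real gap at the starting point that propagates into the induction.

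You begin from Theorem \ref{thmd}, which only gives $f_k\in W^{s,p}(\mathscr C^k;F)$; it does \emph{not} by itself guarantee that the restriction $g_k$ lies in $W^{s,p}(\mathscr S^k)$, nor that its further restrictions to the $(j-1)$-, $(j-2)$-, \ldots\ skeletons are well-defined $W^{s,p}$ objects with the right compatibility. Your induction (``choose $\tilde g_k$ at each vertex \ldots\ extend to each $1$-cell \ldots'') and your collar-gluing step both presuppose exactly this: a meaningful restriction of $g_k$ to each lower-dimensional subskeleton, together with a trace-type relation between consecutive levels. The paper secures this by replacing Theorem \ref{thmd} with the sharper Corollary \ref{c1aa}: the mesh is selected (via the averaged slicing estimates of Lemmas \ref{l1aa} and \ref{l1ab}) so that $g_k\in\mathcal W^{s,p}_j$, i.e.\ $g_k|_{\mathcal C_\ell}\in W^{s,p}(\mathcal C_\ell)$ for every $\ell\le j$ \emph{and} condition \eqref{e1ad} holds. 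That last condition is what drives the ``filling a hole'' lemma (Lemma \ref{ug1}), which is the engine of the inductive step in Lemma \ref{ua1}; without it---in particular in the critical case $sp=j$, where traces do not exist---your ``inductive matching across shared lower-dimensional faces'' has no mechanism to start. This is precisely the obstacle you flag as ``principal'', and it is not resolvable from Theorem \ref{thmd} alone.

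A minor correction: the continuity of the homogeneous extension $g\mapsto h$ in $W^{s,p}$ is Lemma \ref{cont1} in Section \ref{appe}; Sections \ref{appa}--\ref{appb} supply the kernel estimates (Lemmas \ref{lemb2}, \ref{lemb4}) on which it rests, but the operator bound itself is stated and proved there.
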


\noindent
The proof, presented in Section \ref{appe}, uses tools developed in Sections \ref{appc} and \ref{appd}.
\begin{rema}
We emphasize the fact that these approximation results are specific to the case where $0<s<1$. For example, the map $u(x_1,x_2)=x_1$ cannot be approximated in $W^{1,1}((0,1)^2)$ by piecewise $1$-homogeneous maps associated  to meshes contained in $(-1,2)^2$; see Lemma \ref{lema9} in Section \ref{appa}. One may extend the argument given there in order to prove that, for any $p$ and $j$,  non constant smooth maps cannot be approximated in $W^{1,p}(\O)$ by piecewise $j$-homogeneous maps associated  to meshes contained in $\omega$.

The technique of homogeneous extensions has roots in  White \cite{white}, who used it in the study of topological invariants of $W^{1,p}$ maps between manifolds. Homogeneous extensions were also used by Bethuel \cite{bet} in his proof of the $W^{1,p}$ versions of Theorems \ref{thmb} and \ref{thmc}. We point out that our method is different from Bethuel's one. His method involves smoothing of $u$ on a set $A\subset\O$ such that $\O\setminus A$ is small.  Homogeneous extensions are used only in $\O\setminus A$. In our approach, homogeneous extensions are used in all of $\O$.
\end{rema}


The main results of this paper have been mentioned in personal communications starting in 2003 and a sketch of proof can be found in \cite{pm2004} and \cite{pm2007}. Since then, several papers have addressed related questions.

\subsubsection*{Acknowledgments} 
HB was partially supported by NSF grant DMS-1207793 and also by grant number 238702 of the European Commission (ITN, project FIRST).
PM was partially  supported by the ANR project \enquote{Harmonic Analysis at its Boundaries},   ANR-12-BS01-0013-03, and by the LABEX MILYON (ANR-10-LABX-0070) of Universit\'e de Lyon,
within the program \enquote{Investissements d'Avenir} (ANR-11-IDEX-0007) operated 
by the French National Research Agency (ANR). Part of this work was done while PM was visiting the Mathematics Department at Rutgers University. He warmly thanks HB and the department for their hospitality.  We are grateful to Pierre Bousquet, Augusto Ponce and Jean Van Schaftingen for useful discussions.
We are particularly indebted to Pierre Bousquet for his extremely careful and constructive reading of the manuscript.

\tableofcontents

\section{Proof of Theorem \ref{thma} using Theorem \ref{thme}}
\l{sec3}
We start by presenting more details about the cases already known.
\begin{enumerate}[a)]
\item Assume that $1\le sp<2$ and that $0\in\O\subset\R^2$. Let $\d u(x)=\frac x{|x|}$; here, $|\ |$  stands for the Euclidean norm. One may check that $u\in\wsps1$.
Indeed, assume first that $s<1$. We have $u\in W^{1,q}\cap L^\infty$, for each $q<2$. To obtain that $u\in\wsp$, we take $sp<q<2$ and use the Gagliardo-Nirenberg-Sobolev embedding $ W^{1,q}\cap L^\infty\subset \wsp$. Assume next that $s>1$. Since $\nabla u$ is homogeneous of degree $-1$ and smooth outside the origin, we have $\nabla u\in W^{\sigma,q}$ whenever $(1+\sigma)q<2$; this is obtained by arguing as in  \cite[proof of Lemma 1 (ii), p. 44]{rs}. In particular, $\nabla u\in W^{s-1,p}$, so that $u\in\wsp$.

We claim that there is no sequence $\{u_k\}\subset C^\infty(\overline\O\, ; \s1)$ such that $u_k\to u$ in $\wsp$. Argue by contradiction as in \cite{su}. Then there is some small $r>0$ such that, possibly after passing to a subsequence, $u_k\to u$ in $\wsp (C(0 , r))$; here, $C(0 , r)$ is the circle of radius $r$ centered at the origin.

If $sp>1$, this implies uniform convergence of $u_k$ to $u$ on $C(0 , r)$. Therefore, deg$(u_k, C(0 , r))\to$ deg$(u, C(0 , r))=1$. However, deg$(u_k, C(0 , r))=0$ since $u_k$ is smooth in $\O$.

When $sp=1$, convergence need not be uniform anymore. However, we know that $\wsp ( C(0 , r))$$\subset$
VMO with continuous embedding, see  e. g. \cite{bn}. We conclude as above using the continuity of the degree under BMO convergence \cite{bn}.

When $\O\subset\R^n$, with $n\ge 3$, one argues similarly using the map $u(x)=\d\frac {(x_1,x_2)}{|(x_1,x_2)|}$, $x=(x_1,\ldots, x_n)$.
\item Assume that $sp<1$. Let $u\in\wsps1$. By \cite{bbm1},  one may write $u=e^{\imath\varphi}$, with $\varphi\in\wsp (\O\, ;\R)$. If $\{\varphi_k\}\subset C^\infty(\overline\O\, ; \R)$ converges to $\varphi$ in $\wsp$, it is immediate that $u_k:=e^{\imath\varphi_k}\to u$ in $\wsp$ (see e.g. \cite[proof of (5.43)]{bbm}).
\item Assume that $s\ge 1$ and $sp\ge 2$. Then we may write $u=ve^{\imath\varphi}$, with $v\in  C^\infty(\overline\O\, ; \s1)$ and $\varphi\in \wsp\cap W^{1, sp}(\O\, ; \R)$ \cite[Cases 2 and 3, pp. 128-129]{bm}. Let now $\{\varphi_k\}\subset C^\infty(\overline\O\, ; \R)$ converge to $\varphi$ in $\wsp\cap W^{1, sp}$. Then $e^{\imath\varphi_k}\to e^{\imath\varphi}$ in $\wsp$ \cite[Theorem 1.1']{bm1}, which immediately implies that $ve^{\imath\varphi_k}\to v e^{\imath\varphi}=u$ in $\wsp$.
\item Assume that $sp\ge n$. Then density of $C^\infty(\overline\O\, ; \s1)$ in $\wsps1$ is well-known \cite{su} via the Sobolev embeddings $\wsp\subset C^0$ when $sp>n$ and $\wsp\subset$VMO when $sp=n$. For further use, we note that density holds also when $\s1$ is replaced by an arbitrary compact manifold.
\end{enumerate}
We next turn to the case  $0<s<1$ and $2\le sp<n$, which is the only one really new.

\medskip
\noindent
{\it Proof of Theorem \ref{thma}.} 
We assume that $0<s<1$ and $2\le sp<n$. 
Let $q=sp$. Recall
the Gagliardo-Nirenberg type embedding \cite[Appendix D]{bbm1}
\begin{equation}
\label{gn1}
W^{1,q}\cap L^\infty\subset W^{s,p}
\eeq
(valid since $q>1$). This embedding is continuous in the sense that
\beq
\label{gn2}
\text{ if } f_k\to f \text{ in }W^{1,q}\text{ and }\|f_k\|_{L^\infty}\le C, \text{ then }f_k\to f \text{  in }\wsp.
\eeq
On the other hand, since $q\ge 2$, a result of Bethuel and Zheng  \cite{bz} asserts that $\cinf$ is dense in $W^{1,q}(\O ; \so)$. Combining this with \eqref{gn1}-\eqref{gn2}, we find that
\be*
W^{1,q}(\Omega\, ; \s1)=\overline{\cinf}^{W^{1,q}}\subset\overline{\cinf}^{\wsp}.
\ee*
Let now $u\in\wsps1$. We start by extending  $u$ to a neighborhood $\omega$ of $\overline\Omega$; this is achieved via reflections and yields a map $f\in\wsp (\omega\, ; \s1)$.\\
We next  claim that 
the maps $f_k$ given by Theorem \ref{thme}  are in $W^{1,r}$ for each $r<[sp]+1$.  In particular, we have $f_k\in  W^{1,q}$. To establish this fact we rely on the following
\begin{lemm}
\label{yb5}
Assume that $n\ge 2$ and let $U\subset\R^n$ be an open set. Let $K$ be a closed subset of $U$ such that ${\cal H}^{n-1}(K)=0$. Let $u\in W^{1,1}_{loc}(U\setminus K)$ be such that $\d \int_{U\setminus K}|\na u|<\infty$. Then $u\in W^{1,1}_{loc}(U)$ and the Sobolev gradient of $u$ is the Sobolev gradient of $u_{|U\setminus K}$. 
\end{lemm}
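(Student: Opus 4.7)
The plan is to combine a cutoff construction adapted to the hypothesis $\mathcal{H}^{n-1}(K)=0$ with a truncation argument handling the potential unboundedness of $u$ near $K$. First I build, for each $\epsilon>0$, a Lipschitz cutoff $\chi_\epsilon:U\to[0,1]$ vanishing in a neighborhood of $K$, equal to $1$ outside another neighborhood of $K$, with $\int_U|\nabla\chi_\epsilon|\le C\epsilon$. Cover $K$ by balls $B(x_i,r_i)$ with $r_i<\epsilon$ and $\sum_i r_i^{n-1}<\epsilon$ (possible since $\mathcal{H}^{n-1}(K)=0$), set $\chi_\epsilon=0$ on $\bigcup B(x_i,r_i/2)$ and $\chi_\epsilon=1$ off $\bigcup B(x_i,r_i)$, and interpolate linearly in each annulus. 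For every $x\in U\setminus K$ one has $\chi_\epsilon(x)=1$ once $\epsilon<\dist(x,K)$, hence $\chi_\epsilon\to 1$ a.e.\ on $U$.

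I next treat the bounded case via truncation. Setting $u_N:=\max(-N,\min(u,N))$, each $u_N$ lies in $W^{1,1}_{loc}(U\setminus K)\cap L^\infty(U)$ with $|\nabla u_N|\le|\nabla u|$. Given $\varphi\in C_c^\infty(U)$, the product $\chi_\epsilon\varphi$ is Lipschitz with compact support in $U\setminus K$, so the defining identity for the weak derivative of $u_N$ on $U\setminus K$ reads
\beq
\int_{U\setminus K}u_N\,\chi_\epsilon\,\nabla\varphi+\int_{U\setminus K}u_N\,\varphi\,\nabla\chi_\epsilon=-\int_{U\setminus K}\chi_\epsilon\,\varphi\,\nabla u_N.
\eeq
Sending $\epsilon\to 0$, the middle integral vanishes since $|u_N\varphi|\le N\|\varphi\|_\infty$ and $\int|\nabla\chi_\epsilon|\to 0$; the other two converge by dominated convergence (using $|\nabla\varphi|$ bounded with compact support, and $\nabla u_N\in L^1(U\setminus K)$). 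Hence $u_N\in W^{1,1}_{loc}(U)$, with weak gradient equal a.e.\ to the $W^{1,1}_{loc}(U\setminus K)$ gradient of $u_N$.

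Finally, I remove the truncation. Since $K$ is closed in $U$ with $\mathcal{H}^{n-1}(K)=0$, it has empty interior, so $U\setminus K$ is open and dense; fix a ball $B_0\Subset U\setminus K$, on which $u\in L^1(B_0)$. For any compact $\Omega\Subset U$ (in a fixed connected component of $U$), pick a smooth connected open $V$ with $\Omega\cup B_0\subset V\Subset U$. The Poincaré inequality on $V$ gives $\|u_N-(u_N)_V\|_{L^1(V)}\le C(V)\|\nabla u\|_{L^1(U\setminus K)}$, while $|(u_N)_V|$ is bounded in terms of $(u_N)_{B_0}$ and $\|\nabla u\|_{L^1}$, so that $\|u_N\|_{L^1(V)}$ is uniform in $N$. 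Fatou then yields $u\in L^1_{loc}(U)$, and dominated convergence with dominants $|u|$ for $u_N$ and $|\nabla u|$ for $\nabla u_N$ delivers $u_N\to u$ in $W^{1,1}_{loc}(U)$, which simultaneously identifies the Sobolev gradient of $u$ with that of $u_{|U\setminus K}$.

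The main obstacle is the boundary-type term $\int u_N\,\varphi\,\nabla\chi_\epsilon$: to force it to $0$ as $\epsilon\to 0$, one must control $u$ near $K$, which is precisely the role of the truncation. The global bound $\int_{U\setminus K}|\nabla u|<\infty$ then re-enters at the last step, through the Poincaré estimate, to push the truncation to infinity and recover $u$ itself in $L^1_{loc}(U)$.
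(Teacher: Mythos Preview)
The paper does not prove this lemma; it simply cites \cite[Lemma 2.15]{bm} (with pointers to \cite{koskela} and \cite{dupaigneponce} for related results). Your argument is correct and follows the standard route for removability results of this type: a capacitary cutoff exploiting $\mathcal{H}^{n-1}(K)=0$, combined with truncation to $u_N$ so that the boundary term $\int u_N\varphi\,\nabla\chi_\epsilon$ can be forced to zero, followed by Poincar\'e to get the uniform $L^1_{loc}$ bound and pass $N\to\infty$.

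One imprecision worth cleaning up: ``interpolate linearly in each annulus'' does not define $\chi_\epsilon$ when the covering balls overlap. The clean construction is to take, for each ball, a bump $\psi_i$ with $\psi_i=1$ on $B(x_i,r_i/2)$, $\operatorname{supp}\psi_i\subset B(x_i,r_i)$ and $|\nabla\psi_i|\le C/r_i$, and set $\chi_\epsilon=\prod_i(1-\psi_i)$; this still gives $\int|\nabla\chi_\epsilon|\le C\sum_i r_i^{n-1}$. Also, since $K$ is only closed in $U$ (not compact), a finite cover is available only for $K\cap\operatorname{supp}\varphi$; but this is exactly what you need, since $\chi_\epsilon\varphi$ must have compact support in $U\setminus K$. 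With these cosmetic adjustments the proof goes through as written.
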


This result is proved in \cite[Lemma 2.15]{bm}. 
[For similar results, see e.g. \cite[Lemma 3]{koskela}, \cite[Introduction]{dupaigneponce}.] We apply this lemma with $U={\cal C}^k$ and
$K=\Sigma^k$, the set of discontinuity points of $f_k$; this is the \enquote{dual skeleton} of ${\cal S}^k$. Then $\Sigma^k$ is a finite union of $(n-j-1)$-dimensional cubes, and thus ${\cal H}^{n-1}(K)=0$. On the other hand, a straightforward calculation yields 
\beq
\label{g1,1}
|\na\, f_k(x)|\le \frac{C_k}{\dist (x,\Sigma^k)},\ \fo x\in{\cal C}^k\setminus \Sigma^k,
\eeq
and thus $\na\, f_k\in L^{r}$ when $r<j+1$. 
We find that $f_k\in W^{1,1}_{loc}({\cal C}^k)$, and actually $f_k\in W^{1,r}({\cal C}^k)$ when $r<j+1$.

 Thus
\be* \wsps1
\subset\overline{W^{1,q}(\Omega\, ; \s1)}^{\wsp}\subset \overline{\cinf}^{\wsp}.\hfill\square
\ee*

\section{The case of a general target manifold}
\l{sec4}
Here we will address several questions related to the space $\wsp (\Omega\, ; N)$, where   $\O$ is a smooth bounded domain in $\R^n$ and $N$ is a compact manifold without boundary embedded in $\R^m$.

\subsection{Proof of Theorem \ref{thmb} using Theorems \ref{thmd} and \ref{thme}}

We start by extending a map $u\in\wspn$ to a map $f\in\wsp (\omega\, ; N)$.\\
If $sp<1$, then the maps $f_k$ given by Theorem \ref{thmd} are piecewise constant, and thus in ${\mathscr R}_{s, p}$, and we are done.\\
Assume next that $sp\ge 1$. Let $q$ be such that $sp<q<[sp]+1$. Note that $1<q<n$ and that $[q]=[sp]$. As in the proof of Theorem \ref{thma},  \eqref{gn1} and \eqref{gn2} hold  (since $q>1$ and $q\ge sp$). Combining \eqref{gn1} and \eqref{gn2} with Bethuel's density result for the class ${\cal R}_{1,q}$ (valid since $q<n$), we find that 
 ${\mathscr R}_{1,q}\subset  {\mathscr R}_{s, p}$ and 
\be*
 W^{1,q}(\Omega\, ; N)=\overline{{\mathscr R}_{1,q}}^{W^{1,q}}\subset\overline{{\mathscr R}_{1,q}}^{\wsp} \subset \overline{{\mathscr R}_{s,p}}^{\wsp}.
\ee*
Since
the maps $f_k$ given by Theorem \ref{thme} are in $W^{1,q}$ (this uses the fact that $q<[sp]+1$), we obtain
\be*\wspn\subset \overline{W^{1,q}(\Omega\, ; N)}^{\wsp}\subset \overline{{\mathscr R}_{s,p}}^{\wsp}.\hfill\square
\ee*

\subsection{Proof of Theorem \ref{thmc} using Theorems \ref{thmd} and \ref{thme}}
\label{sub32}

We start with the case $sp<1$; here, the topological condition is that $N$ is connected, which is satisfied by assumption. As we will see, in this case  $\O$ could be any smooth domain.

If $N$ is a curve, then $N$ is diffeomorphic to $\s1$, and a straightforward argument reduces the problem to the one of the density of $\cinf$ in $\wsps1$, which follows from Theorem \ref{thma}.\\
Assume next that dim $N\ge 2$. Let $u\in\wspn$. We first extend it near $\overline\O$, next we consider a map $f_k$ as in Theorem \ref{thmd}. It suffices to prove that such a map, which is piecewise constant, can be approximated by smooth $N$-valued maps. Now $f_k$ assumes only finitely many values, say $a_1,\ldots, a_l$. Let $\Gamma\subset N$ be a smooth simple curve that contains $a_1,\ldots, a_l$. Then $f_k\in\wsp (\O\, ; \Gamma)$. By our discussion on curves, $f_k$ may be approximated by $\Gamma$-valued (thus $N$-valued) smooth maps.

\medskip
We now turn to the case $1\le sp<n$.

\medskip
\noindent
{\bf Condition  $\pi_{[sp]}(N)=0$ is necessary.}  Let $j=[sp]$. Argue by contradiction and let $v\in C^\infty ({\mathbb S}^{j}\, ; N)$ such that $v$ is not homotopic to a constant. Assume that $\Omega$ is the unit ball and let $u:\O\to N$, $\d u(x)=v\left(
\frac {(x_1,\ldots, x_{j+1})} {|(x_1,\ldots, x_{j+1})|}\right)$; here, $|\ |$  stands for the Euclidean norm. It is easy to see that $u\in W^{1,q}$ for each $q<j+1$, and thus $u\in \wsp$. As in the proof of  a) in Section \ref{sec3}, the stability of the homotopy class under uniform (or BMO) convergence implies that there is no sequence $\{u_k\}$ of smooth $N$-valued maps such that $u_k\to u$ in $\wsp$.

\medskip
\noindent
{\bf Condition  $\pi_{[sp]}(N)=0$ is sufficient.} It suffices to prove that each map $f_k$ given by Theorem \ref{thme} can be approximated by smooth maps. 
Let $q$ be such that $sp<q<[sp]+1$, so that $[q]=[sp]$. Then $\cinn$ is dense in $W^{1,q}(\O\, ; N)$, since  $\pi_{[q]}(N)=0$
and $\O$ is a ball \cite{bet}, \cite{hl}. The proof of Theorem \ref{thmb} implies that $\cinn$ is dense in $\wspn$.
\hfill$\square$
\begin{coro}
\label{n=1}
If $I$ is a bounded interval, then $C^\infty(\overline I ;\, N)$ is dense in $\wsp (I ; \, N)$ for each $s$ and $p$.
\end{coro}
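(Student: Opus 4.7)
The plan is to split on whether $sp<1$ or $sp\ge 1$, since $n=1$ forces the second case to coincide with the already-settled regime $sp\ge n$.

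Suppose first that $sp<1$. Here no topological obstruction is present (indeed $\pi_0(N)=0$ by connectedness). The strategy is exactly the $sp<1$ part of the proof of Theorem \ref{thmc}, and it works for any smooth domain, in particular for $I$. I first extend $u\in\wsp(I;N)$ to a map $f\in\wsp(\omega;N)$ on a slightly larger interval $\omega\supset\overline I$. I then apply Theorem \ref{thmd} with $j=[sp]=0$ to obtain a sequence $f_k\to f$ in $\wsp(I)$, where each $f_k$ is piecewise $0$-homogeneous on a mesh ${\mathscr C}^k$, i.e.\ piecewise constant. Each $f_k$ therefore takes only finitely many values $a_1,\ldots,a_{l_k}\in N$; since $N$ is connected, I pick a smooth simple curve $\Gamma_k\subset N$ through $a_1,\ldots,a_{l_k}$, so that $f_k\in\wsp(I;\Gamma_k)$. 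The curve $\Gamma_k$ is diffeomorphic either to a closed interval or to $\s1$; in both cases piecewise constant maps are approximable by smooth $\Gamma_k$-valued maps, the interval case being immediate by standard $\R$-valued smoothing and the $\s1$ case being the content of Theorem \ref{thma} (which is already known with $n=1$, $sp<1$, e.g.\ via lifting).

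Suppose next that $sp\ge 1$. Since $n=1$, this is exactly the regime $sp\ge n$ covered by the remark following Theorem \ref{thmb}. If $sp>1$ the Sobolev embedding $\wsp(I)\incl C^0(\overline I)$ gives $u\in C^0(\overline I;N)$; mollification of an $\R^m$-valued extension of $u$ produces smooth approximants which, by uniform convergence, eventually take values in a tubular neighborhood of $N$, and composing with the nearest-point projection $\Pi_N$ yields smooth $N$-valued approximants converging in $\wsp$. If $sp=1$, the embedding into $C^0$ is replaced by $\wsp(I)\incl\mathrm{VMO}(I)$; the same projection argument then applies since $\Pi_N$ is continuous from VMO convergence into VMO convergence on a neighborhood of $N$, in the spirit of \cite{su}, \cite{bn}.

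No step presents a genuine obstacle: the tubular projection argument is classical, and the $sp<1$ construction is essentially a one-dimensional specialization of the proof of Theorem \ref{thmc}, whose only ingredient beyond Theorem \ref{thmd} is that a finite subset of a connected manifold lies on a smooth simple curve. Combining the two cases gives density of $C^\infty(\overline I;N)$ in $\wsp(I;N)$ for every $s>0$ and $1\le p<\infty$. \hfill$\square$
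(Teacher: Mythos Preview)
Your proof is correct and follows essentially the same route as the paper: the paper's own proof simply invokes Theorem~\ref{thmc} for $sp<1$ and case d) in Section~\ref{sec3} for $sp\ge 1=n$, while you have unpacked those two references explicitly (the piecewise-constant-plus-curve argument for $sp<1$, and the $C^0$/VMO embedding plus nearest-point projection for $sp\ge 1$). There is no substantive difference.
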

\begin{proof}
When $sp<1$, density follows from Theorem \ref{thmc}. When $sp\ge 1$, we are in case d) discussed in Section \ref{sec3} and we still have density.
\end{proof}

\def\ve{\varepsilon}
\section{Approximation by homogeneous extensions}
\l{appa}

At the end of Section \ref{appb}, we will present two proofs of Theorem \ref{thmd}. The first one is quite long, but covers all the possible cases and has the advantage of introducing several  calculations which will prove useful in Sections \ref{appc}-\ref{appe}. 

The second proof, much shorter,  is valid under the additional assumption $j\ge 1$.   It  relies on two rather short calculations and on interpolation. While the same strategy could serve to prove some of the auxiliary results in later sections, e.g. Lemma \ref{cont1}, it is unclear 
whether this approach could be used in obtaining Lemmas \ref{l1ab} and \ref{ug1}, which are at the heart of the proof of Theorem \ref{thme}. If interpolation could help in obtaining Lemmas \ref{l1ab} and \ref{ug1}, then this approach  would lead to significantly shorter proofs of Theorems \ref{thmd} and \ref{thme}. 

For the convenience of the reader, the \enquote{long proof} of Theorem \ref{thmd} is split into two parts: this section  is devoted to approximation by piecewise $(n-1)$-homogeneous maps. Section \ref{appb} treats the case of piecewise $j$-homogeneous maps, with $j\le n-2$. The proofs of Theorem \ref{thmd} are presented at the end of Section \ref{appb}.

Throughout the remaining sections, $C$ will denote a constant depending only on $n$, $s$ and $p$. If necessary, we will enhance the dependence on the parameters by denoting $C=C(n,s,p)$, etc.

If $f:\R^n\to\R^m$, one may associate to $f$ a family $\{ f_{T,\ve}\}_{T\in\R^n,\, \ve>0}$ of piecewise $(n-1)$-homogeneous maps as follows: for each $T\in\R^n$, there exists exactly one horizontal (=with faces parallel to the coordinate hyperplanes) mesh of size $2\ve$ having $T$ as one of its centers. [The mesh consists of the cubes $T+2\ve\, K+(-\ve,\ve)^n$, with $K\in\Z^n$.] We restrict $f$ to the boundary of this mesh, next extend homogeneously this restriction to the cubes of the mesh. The map obtained by this procedure will be denoted $f_{T,\ve}$ or simply $f_T$ when $\ve$ is fixed. \\
Analytically, $f_{T,\ve}$ is defined as follows: let $ |\ |$ denote the $\sup$ norm in $\mathbb R^n$.  For $\varepsilon > 0$,
let 
\be*
Q_\varepsilon(X) = \{Y \in \mathbb R^n\ ;\ |Y - X| < \ve\},\  Q_\ve =
Q_\varepsilon (0).
\ee*

For a.e. $X\in \mathbb R^n$,
there exists a unique $K\in \mathbb Z^n$ such that $X \in Q_\varepsilon
(T+2\varepsilon K)$.  Then 
\be*
f_T(X) = f_{T,\varepsilon}(X) = f\left(T+2\varepsilon K + \varepsilon \frac{X-T - 2
\varepsilon K}{|X-T-2\varepsilon K|}\right).
\ee*

\medskip

This section is essentially devoted to the proof of
\begin{lemm}
\l{lema1}  Let $0< s< 1, 1 \leq p < \infty$ be
such that $sp < n$.  For each $f\in W^{s,p}(\mathbb R^n ; \mathbb R^m)$ there
are sequences $\varepsilon_k \to 0$ and $\{T_k\} \subset \mathbb R^n$ such
that $f_{T_k, \varepsilon_k}\to f$ in $W^{s,p}(\mathbb R^n)$.
\end{lemm}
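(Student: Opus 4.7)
The plan is to use an averaging argument over the translation parameter $T$. For fixed $\varepsilon>0$, consider the average
\be*
\Phi(\varepsilon)\coloneqq\frac{1}{|Q_\varepsilon|}\int_{Q_\varepsilon}\lv f_{T,\varepsilon}-f\rv_{\wsp(\R^n)}^{p}\,dT;
\ee*
once we show $\Phi(\varepsilon)\to 0$ as $\varepsilon\to 0$, for any sequence $\varepsilon_k\to 0$ a Chebyshev-type selection in $Q_{\varepsilon_k}$ produces $T_k$ with $\lv f_{T_k,\varepsilon_k}-f\rv_{\wsp}^{p}\le 2\Phi(\varepsilon_k)\to 0$, which is the desired conclusion. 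The convergence $\Phi(\varepsilon)\to 0$ itself will follow from two ingredients: an averaged uniform estimate
\be*
\frac{1}{|Q_\varepsilon|}\int_{Q_\varepsilon}\lv f_{T,\varepsilon}\rv_{\wsp(\R^n)}^{p}\,dT\le C\,\lv f\rv_{\wsp(\R^n)}^{p},
\ee*
with $C=C(n,s,p)$ independent of $\varepsilon$, together with direct convergence for $f\in C_c^\infty(\R^n)$. The smooth case is routine (use $|f_{T,\varepsilon}(x)-f(x)|\le C\varepsilon\lv\na f\rv_{L^\infty}$ on each cube and split the Gagliardo double integral near and far from the mesh skeleton); the uniform bound then reduces the general case to the smooth one via the splitting $f=g+r$ with $g\in C_c^\infty(\R^n)$ and $\lv r\rv_{\wsp}$ arbitrarily small.

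The crux of the proof is the uniform bound. I would split the double Gagliardo integral of $f_{T,\varepsilon}$ into contributions from pairs $(x,y)$ lying in a common cube $Q_\varepsilon(c)$ of the mesh (with $c=T+2\varepsilon K$) and from pairs in two distinct cubes. On a single cube, I parameterize $x=c+r_x\omega_x$, $y=c+r_y\omega_y$ with $r_x,r_y\in(0,\varepsilon)$ and $\omega_x,\omega_y\in\p Q_1$; since $f_{T,\varepsilon}$ depends only on the angular variables, the radial variables can be integrated out explicitly, and the assumption $sp<n$ makes these radial integrals converge. This produces an estimate of the shape
\be*
\iint_{Q_\varepsilon(c)\times Q_\varepsilon(c)}\frac{|f_{T,\varepsilon}(x)-f_{T,\varepsilon}(y)|^p}{|x-y|^{n+sp}}\,dx\,dy\le C\iint_{\p Q_\varepsilon(c)\times\p Q_\varepsilon(c)}\frac{|f(\sigma)-f(\tau)|^p}{|\sigma-\tau|^{(n-1)+sp}}\,d{\cal H}^{n-1}\sigma\,d{\cal H}^{n-1}\tau,
\ee*
i.e., a trace-type Gagliardo seminorm of $f$ on the cube boundary. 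Summing over $K$ and averaging over $T\in Q_\varepsilon$, the shifted centers $T+2\varepsilon K$ sweep over all of $\R^n$ as $(T,K)$ ranges in $Q_\varepsilon\times\Z^n$, and a Fubini computation turns the averaged trace seminorm on the skeleton into $C\,|f|_{\wsp(\R^n)}^{p}$.

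The genuine obstacle is the pairs $(x,y)$ lying in distinct cubes, in particular the adjacent-cube case: here $|x-y|$ can be much smaller than the distance $|\sigma_x-\sigma_y|$ between the radial projections onto the two cube boundaries, so one cannot simply replace $|x-y|$ by $|\sigma_x-\sigma_y|$. I plan to subdivide such pairs according to whether the two cubes are at mutual sup-distance $\ge 4\varepsilon$ (in which regime $|x-y|\asymp|\sigma_x-\sigma_y|$ and the analysis parallels the same-cube case) or adjacent; the adjacent case will be treated by a careful application of Fubini in $T$ together with a radial-flow estimate connecting $x$ to $\sigma_x$ and $y$ to $\sigma_y$, yielding once more a bound by $C\,|f|_{\wsp(\R^n)}^{p}$ after averaging.
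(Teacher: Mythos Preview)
Your overall strategy---uniform averaged bound, verify convergence on $C_c^\infty$, conclude by density---is sound and is in fact the route the paper takes in its \emph{second} (``short'') proof of the more general Theorem~\ref{thmd}, at the end of Section~\ref{appb}. There, however, the uniform bound is obtained by interpolation between a trivial $L^q$ estimate and a $W^{1,r}$ estimate ($1<r<n$), not by a direct fractional computation. The paper's \emph{primary} proof of Lemma~\ref{lema1} is organized quite differently: it works directly with $f_{T,\varepsilon}-f$, does not use density, and only shows the averaged error is bounded by $a(\varepsilon)+b(\varepsilon)$ with $a(\varepsilon)\to 0$ and $\int_0^1 b(\varepsilon)/\varepsilon\,d\varepsilon<\infty$---hence convergence only along a subsequence $\varepsilon_k$, not $\Phi(\varepsilon)\to 0$.

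Your sketch has a real gap at the ``Fubini computation''. After the same--cube radial estimate and the translation average you are looking at
\[
\frac{1}{\varepsilon^{n}}\int_{\R^n}dU\int_{\partial Q_\varepsilon}\int_{\partial Q_\varepsilon}\frac{|f(U+\alpha)-f(U+\beta)|^p}{|\alpha-\beta|^{n-1+sp}}\,d\alpha\,d\beta,
\]
and setting $\lambda=\beta-\alpha$ is not a clean change of variables: for generic $\lambda\in Q_{2\varepsilon}$ the slice $\{\alpha\in\partial Q_\varepsilon:\alpha+\lambda\in\partial Q_\varepsilon\}$ is $(n-2)$--dimensional, while on the exceptional hyperplanes $\lambda_j\in\{0,\pm 2\varepsilon\}$ it is $(n-1)$--dimensional. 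Sorting this out is precisely Lemma~\ref{lema5}, and the resulting terms $P_{j,0}$, $P_{j,\pm 2\varepsilon}$ are then controlled via the Besov--type Lemma~\ref{lema6} and a midpoint--averaging trick. This is the heart of the argument, not routine Fubini. The adjacent--cube case is handled by the parallel Lemmas~\ref{lema7}--\ref{lema8}. A smaller gap: for smooth $f$ the map $f_{T,\varepsilon}$ is \emph{not} Lipschitz (its gradient behaves like $\varepsilon/\dist(x,\text{centers})$), so ``split near/far from the skeleton'' does not immediately close; the clean route is the uniform $W^{1,r}$ bound plus $L^\infty$ convergence and interpolation---which is exactly the paper's short proof.
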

\begin{proof}  We will establish the following estimate 
\beq
\l{e05091}
\frac{1}{\varepsilon^n} \int\limits_{Q_\varepsilon}\|
f-f_{T,\varepsilon}\|^p_{W^{s,p}}dT\leq a(\varepsilon) + b(\varepsilon),
\eeq
where
\beq
\l{e05092}
a(\varepsilon) \to 0 \text{ as } \varepsilon \to 0\ \text{and }
\int\limits^1_0\frac{b(\varepsilon)}{\varepsilon}\, d\varepsilon <\infty.
\eeq
Assume \eqref{e05091} proved for the moment.  Then \eqref{e05092} implies that, for a
sequence $\ve_k\to 0$, we have $a (\varepsilon_k)+ b(\varepsilon_k) \to 0.$ The conclusion of
Lemma \ref{lema1} is then an immediate consequence of \eqref{e05091}.

\medskip
We next turn to the proof of \eqref{e05091}. A warning about notation. The calculations below will involve multiple integrals. In order to make these calculations easier to follow, an integral of the form $\d \int_{A\times B} f(X,Y)\, dXdY$ will be  denoted $\d\int_A dX\int_B dY\, f(X,Y)$.

\smallskip
For the convenience of the reader, we split the proof of \eqref{e05091} into
several steps.

\bigskip
\noindent{\bf Step 1.}  We have
\beq
\label{a3}
A : = \frac{1}{\varepsilon^n}\int\limits_{Q_\varepsilon} \| f-f_{T,\varepsilon}
\|^p_{L^p} dT \to 0 \text{ as } \varepsilon\to 0. 
\eeq
\def\Z{\mathbb Z}
Indeed, since $(Q_\varepsilon(T+2\varepsilon K))_{K\in\mathbb Z^n}$ is an
a.e. partition of $\mathbb R^n$ and $f_T=f_{T+2\ve K}$ for $T\in\R^n$ and $K\in\Z^n$, we have

\beq
\label{a4}
\aligned
A=& \frac{1}{\varepsilon^n} \int\limits_{|T|<\varepsilon} dT \sum_{K\in \mathbb Z^n}\
\int\limits_{Q_\varepsilon(T+2\varepsilon K)} |f(X) -
f_T(X)|^p dX =  \frac{1}{\varepsilon^n} \int\limits_{\mathbb R^ n} dX \int\limits_{Q_\varepsilon(X)} |f(X) -
f_T(X)|^p dT=\\
=& \frac{1}{\varepsilon^n} \int\limits_{\mathbb R^n} dX \int\limits_{Q_\varepsilon}\left|f(X) -
f\left(X+Y-\varepsilon \frac{Y}{|Y|}\right)\right|^p dY= \frac{1}{\varepsilon^n} \int\limits_{Q_\varepsilon}\left\| f(\cdot) - f \left(\cdot + Y -
\varepsilon \frac{Y}{|Y|}\right) \right\|_{L^p}^p dY.
\endaligned 
\eeq
We note that $Y\in Q_\varepsilon \implies \d Y-\varepsilon \frac{Y}{|Y|} \in
Q_\varepsilon$.  Therefore,
\beq
\label{a5}
A\leq 2^n \sup \{ \|f (\cdot) - f(\cdot + Y)\|_{L^p}^p  ; |Y| \leq
\varepsilon\}.
\eeq
Inequality \eqref{a5} implies  \eqref{a3} and  completes Step 1.

\bigskip
In order to complete the proof of Lemma \ref{lema1}, it remains to estimate
\be*
\begin{aligned}
B:=& \frac 1{\ve^n}\int\limits_{Q_\ve}|f-f_T|_{\wsp}^p\, dT\\
 =&\frac{1}{\varepsilon^n} \int\limits_{Q_\varepsilon} dT \int\limits_{\mathbb R^n}\int\limits_{\mathbb R^n} dX dY
\ \frac{|[f(X) - f_T(X)]- [f(Y) - f_T(Y)]|^p}{|X-Y|^{n+sp}}.
\end{aligned},
\ee*
and more specifically to obtain an upper  bound of the form $B\le a(\ve)+b(\ve)$, with $a$ and $b$ as in \eqref{e05092}.

To this end, we use the inequalities
\be*
|[f(X) - f_T(X)]- [f(Y) - f_T(Y)]|^p\leq\begin{cases}
 C (|f(X) - f_T(X)|^p + |f(Y) - f_T(Y)|^p), & \text{if } |X-Y|>
\varepsilon
\\
 C (|f(X) - f(Y)|^p +|f_T(X) - f_T(Y)|^p), & \text{if } |X-Y|\leq
\varepsilon
\end{cases}.
\ee*
We find that
\beq
\label{bigdeal}
B\leq C(I+J+D),
\eeq
where
\be*
I=\frac{1}{\varepsilon^n}\int\limits_{Q_\varepsilon} dT \iint\limits_{|X-Y|>\varepsilon} dXdY
\ \frac{|f(X) - f_T(X)|^p}{|X-Y|^{n+sp}},
\ee*
\be*
J
=\frac{1}{\varepsilon^n}\int\limits_{Q_\varepsilon} dT \iint\limits_{|X-Y|<\varepsilon} dXdY
\ \frac{|f(X) - f(Y)|^p}{|X-Y|^{n+sp}}
\ee*
and
\beq
\label{defd}
D=\frac{1}{\varepsilon^n}\int\limits_{Q_\varepsilon} dT \iint\limits_{|X-Y|<\varepsilon} dXdY
\ \frac{|f_T(X) - f_T(Y)|^p}{|X-Y|^{n+sp}}.
\eeq

Thus our purpose is to establish the estimates 
\beq
\label{finest}
I\le a(\ve)+b(\ve),\ J\le a(\ve)+b(\ve),\ D\le a(\ve)+b(\ve),
\eeq
with $a(\ve)$ and $b(\ve)$ as in \eqref{e05092}.

Clearly,
\be*
J= 2^n \iint\limits_{|X-Y|<\varepsilon} dXdY
\ \frac{|f(X) - f(Y)|^p}{|X-Y|^{n+sp}} \to 0 \text{ as } \varepsilon \to 0.
\ee*
Therefore, it remains to estimate $I$ and $D$.

\bigskip
\noindent{\bf Step 2.}  Estimate of $I$\\
We have 
\be*
I= \frac{C}{\varepsilon^n} \int\limits_{Q_\varepsilon} dT \int\limits_{\mathbb R^n}  dX
\ \frac{|f(X) - f_T(X)|^p}{\varepsilon^{sp}}=
\frac{C}{\varepsilon^{n+sp}}\int\limits_{Q_\varepsilon} dT\int\limits_{\mathbb R^n}dX\  |f(X) - f_T(X)|^p.
\ee*
As in the proof of \eqref{a4}, we find that 
\be*
I= \frac{C}{\varepsilon^{n+sp}} \int\limits_{\mathbb R^n}  dX \int\limits_{Q_\varepsilon} dY
 \ \left|f(X) - f\left(X+ Y-\varepsilon\frac{Y}{|Y|}\right)\right|^p.
\ee*

We next introduce a change of variables widely used in what follows. We write $Y=\delta\, \omega$ (or $Y=r\, \omega$ or $Y=\lambda\,\omega$ elsewhere), where $\delta=|Y|=\max\{ |Y_1|,\ldots, |Y_n|\}$ and $|\omega|=1$. We will denote the new variables $\delta$ and $\omega$ as polar coordinates. These are not the \enquote{Euclidean} polar coordinates, but rather \enquote{cubic} polar coordinates adapted to the norm $|\ |$. Let us note that the Jacobian of these coordinates is still $\delta^{n-1}\, d\delta d\omega$.

\smallskip
In polar coordinates, the expression of $I$ becomes
\beq
\label{ue6}
\aligned
I&= \frac{C}{\varepsilon^{n+sp}} \int\limits_{\mathbb R^n}  dX \int\limits^\varepsilon_0
\delta^{n-1}d\delta \int\limits_{|\omega|=1} d\omega
\ |f(X) - f(X+ \delta\omega -\varepsilon\omega)|^p=\\
&= \frac{C}{\varepsilon^{n+sp}} \int\limits_{\mathbb R^n}  dX
\int\limits^\varepsilon_0(\varepsilon-\lambda)^{n-1} d\lambda\int\limits_{|\omega|=1}
d\omega\ |f(X) - f(X- \lambda \omega)|^p=\\
&= \frac{C}{\varepsilon^{n+sp}} \int\limits_{\mathbb R^n}  dX \int\limits_{|Y|<\varepsilon}dY \, 
\frac{(\varepsilon - |Y|)^{n-1}}{|Y|^{n-1}}\  |f(X) - f(X-Y)|^p.
\endaligned
\eeq
Since clearly
\beq
\label{ue7}
\frac{(\varepsilon - |Y|)^{n-1}}{\varepsilon^{n+sp} |Y|^{n-1}} \leq
\frac{1}{|Y|^{n+sp}}\text{ if } |Y|<\varepsilon\le 1,
\eeq
we find that
\be*
I\leq C \iint\limits_{|X-Z|<\varepsilon} dXdZ\
\frac{|f(X) - f(Z)|^p}{|X-Z|^{n+sp}} \to 0 \text{ as } \varepsilon\to 0,
\ee*
and thus $I$ satisfies \eqref{finest}.

\bigskip
\noindent{\bf Step 3.}  Estimate of $D$\\
We start by noting that, if $X\in Q_\varepsilon(T+2\varepsilon K)$ and $|Y-X|<\ve$, then 
\be*
Y\in \bigcup_{ \shortstack{$ \scriptstyle L \in \mathbb Z^n$ \\ $\scriptstyle |L|\leq 1$}}
Q_\varepsilon (T+2\varepsilon (K+L)).
\ee*
Therefore, 
\be*
D\leq
\frac{1}{\varepsilon^n}\int\limits_{Q_\varepsilon} dT \sum_{\shortstack{$\scriptstyle K, L\in\mathbb Z^n$ \\ $\scriptstyle |L|\le1$}}\ \ 
\int\limits_{Q_\varepsilon
(T+2\varepsilon K)}dX \int\limits_{Q_\varepsilon (T+2\varepsilon (K+L))}dY\
\frac{|f_T(X) - f_T(Y)|^p}{|X-Y|^{n+sp}}.
\ee*
For $L\in \mathbb Z^n$, set
\beq
\label{abac}
D_L= \frac{1}{\varepsilon^n} \int\limits_{Q_\varepsilon}dT \sum_{K\in \mathbb
Z^n}\ \  \int\limits_{Q_\varepsilon (T+2\varepsilon K)} dX
\int\limits_{Q_\varepsilon (T+2\varepsilon (K+L))} dY 
\ \frac{|f_T(X) - f_T(Y)|^p}{|X-Y|^{n+sp}},
\eeq
so that
\be*
D\leq
\sum_{\shortstack{$\scriptstyle L\in \mathbb Z^n$ \\ $\scriptstyle |L|\leq 1$}} D_L.
\ee*
We estimate separately each $D_L$.  We consider two cases:  $L=0$ and
$|L|=1$.

\bigskip
\noindent{\bf Step 3.1.}  Estimate of $D_0$\\
Since $f_T=f_{T+2\ve K}$, $\forall\, T\in\R^n$, $\forall\, K\in\Z^n$, we have
\be*
\aligned
D_0=&
\frac{1}{\varepsilon^n}\int\limits_{Q_\varepsilon} dT \sum_{ K\in\mathbb Z^n}
\int\limits_{Q_\varepsilon (T+2\varepsilon K)}dX
\int\limits_{Q_\varepsilon (T+2\varepsilon K)}dY
\ \frac{|f_T(X) - f_T(Y)|^p}{|X-Y|^{n+sp}}=\\
=&
\frac{1}{\varepsilon^n}\int\limits_{\mathbb R^n} d U 
\int\limits_{Q_\varepsilon (U)} dX\int\limits_{Q_\varepsilon(U)}dY
\ \frac{|f_U(X) - f_U(Y)|^p}{|X-Y|^{n+sp}}.
\endaligned
\ee*
In polar coordinates, we obtain
\be*
D_0=
\frac{1}{\varepsilon^n}\int\limits_{\mathbb R^n} dU \int\limits_0^\varepsilon \delta^{n-1}
d\delta \int\limits_0^\varepsilon \lambda^{n-1} d\lambda\int\limits_{|\omega|=1}
d\omega \int\limits_{|\sigma|=1} d\sigma
\ \frac{|f_U(U+\delta\omega) - f_U(U+\lambda
\sigma)|^p}{|\delta\omega-\lambda\sigma|^{n+sp}}.
\ee*
Since $\d
f_U(U+\delta\omega)= f(U+\varepsilon \omega)  $ and $\d f_U(U+\lambda\sigma)=f(U+\varepsilon \sigma)$,
we find that
\beq
\label{a10}
D_0=
\frac{1}{\varepsilon^n}\int\limits_{\mathbb R^n} dU \int\limits_{|\omega|=1} d\omega\int\limits_{|\sigma|=1} d\sigma\
 |f(U+\varepsilon\omega) - f(U+\varepsilon\sigma)|^p k(\omega, \sigma),
\eeq
where
\be*
k(\omega, \sigma)= \int\limits_0^\varepsilon \delta^{n-1}
d\delta \int\limits_0^\varepsilon \lambda^{n-1}\  d\lambda\frac{1}{|\delta\omega
-\lambda\sigma|^{n+sp}}.
\ee*

In order to complete Step 3.1, we will use
\begin{lemm} 
\l{lema2} Assume that $sp<n$. Then, for $|\omega| = |\sigma| = 1$, we have 
\beq
\label{a11}
k(\omega, \sigma)\leq \frac{C\varepsilon^{n-sp}}{|\omega-\sigma|^{n+sp-1}}.
\eeq
\end{lemm}
\begin{rema}
\l{ra3}
In the proof of Lemma \ref{lema1}, the condition $sp<n$ is used only to obtain \eqref{a11} and its more general form \eqref{gene}.
\end{rema}
\begin{proof}[Proof of Lemma \ref{lema2}] We have $k(\omega, \sigma)=\overline k(\omega, \sigma) + \tilde k(\omega, \sigma)$, where $\overline k(\omega, \sigma)
=\d \iint\limits_{\lambda \leq \delta}\ldots$ ,  $\tilde k(\omega, \sigma) =\d \iint\limits_{\delta\leq \lambda}\ldots$
We will establish \eqref{a11} for $k(\omega, \sigma)$ replaced by $\overline k(\omega, \sigma)$; a similar
inequality holds for $\tilde k(\omega, \sigma)$.  We have 
\be*
\aligned
\overline k
(\omega, \sigma)=& \int\limits_0^\varepsilon \delta^{n-1}
d\delta \int\limits_0^\delta \lambda^{n-1} d\lambda\ \frac{1}{|\delta\omega
-\lambda\sigma|^{n+sp}}=\\
=&\int\limits^1_0(t\varepsilon)^{n-1} \varepsilon dt \int\limits^1_0(t\tau\varepsilon)^{n-1}
t\varepsilon d \tau\ \frac{1}{t^{n+sp} \varepsilon^{n+sp} |\omega -
\tau\sigma|^{n+sp}}.\endaligned
\ee*
Thus,
\be*
\overline k
(\omega, \sigma)= \varepsilon^{n-sp} \int\limits_0^1 t^{n-sp-1} dt
\int\limits_0^1 \tau^{n-1}\  \frac{d\tau} {|\omega - \tau\sigma|^{n+sp}}
\leq  C \varepsilon^{n-sp} \int\limits^1_0 \tau^{n-1}\
\frac{d\tau}{|\omega - \tau\sigma|^{n+sp}}
\ee*
(here, we use the fact that $sp<n$).

We complete the proof of Lemma \ref{lema2} by establishing the following inequality.
\beq
\label{a13}
F:=\int\limits^1_0 \tau^{n-1}
\frac{d\tau}{|\omega - \tau\sigma|^{n+sp}} \leq
\frac{C}{|\omega-\sigma|^{n+sp-1}} \text{ if } 
|\omega | = |\sigma|=1.
\eeq
Indeed, 
if $\d  |\omega-\sigma| \geq \frac{1}{20}$, inequality \eqref{a13} is clear,
since in this case we have $|\omega - \tau\sigma|\geq C,$ for $0 \leq
\tau\leq 1.$

Let now $\d  |\omega - \sigma| < \frac{1}{20}$.  We split $F= F_1 + F_2$,
where $\d F_1 = \int\limits_0^{1-3|\omega-\sigma|}\ldots  $,  $\d F_2 =
\int\limits^1_{1-3|\omega-\sigma|}\ldots$\\
On the one hand, we have 
\be*
|\omega - \tau \sigma |\geq C|\omega - \sigma| \text{ if } |\omega| = |
\sigma| = 1\text{ and }\tau\in\R.
\ee*
Therefore,
\beq
\label{a14}
F_2 \leq \frac{C|\omega - \sigma|}{|\omega - \sigma|^{n+sp}} \leq
\frac{C }{|\omega - \sigma|^{n+sp -1}}.
\eeq
On the other hand, when $0\le\tau\le 1$ we have
\be*
|\omega - \tau\sigma |= |(1-\tau) \omega + \tau(\omega-\sigma)| \geq 1
- \tau-\tau|\omega-\sigma|=1-\tau(1+|\omega-\sigma|).
\ee*
Thus
\beq
\label{a15}
\aligned
F_1 & \leq \int\limits^{1-3|\omega-\sigma|}_0 \tau^{n-1}
\ \frac{d\tau}{[1-\tau(1+|\omega-\sigma|)]^{n+sp}}
 =\frac{1}{(1+ |\omega-\sigma|)^n}\int\limits^1_{2|\omega-\sigma|+ 3|\omega
-\sigma |^2} \frac{(1-t)^{n-1}}{t^{n+sp}}dt\\
& \leq \int\limits^1_{2|\omega-\sigma|+ 3|\omega
-\sigma |^2}
 \frac{dt}{t^{n+sp}}\leq \frac{C}{|\omega-\sigma |^{n+sp -1}}.
\endaligned
\eeq

We obtain \eqref{a13} when $\d  |\omega-\sigma|<\frac{1}{20}$  combining
\eqref{a14} with \eqref{a15}.

The proof of Lemma \ref{lema2} is complete.
\end{proof}
\begin{rema}
For further use, we note that the proof of Lemma \ref{lema2} shows that \eqref{a13} holds under more general assumptions on $\omega$ and $\sigma$. More specifically, if $sp<n$ then we have
\beq
\label{gene}
\int\limits^1_0 \tau^{n-1}\
\frac{d\tau}{|\omega - \tau\sigma|^{n+sp}} \leq
\frac{C}{|\omega-\sigma|^{n+sp-1}} \text{ if } 
|\sigma | =1\text{ and }1\le  |\omega|\le 3.
\eeq
\end{rema}

\bigskip
\noindent{\bf Step 3.1 continued.}  Recall that we want to establish an estimate of the form $D_0\le a(\ve)+b(\ve)$.

By \eqref{a10} and Lemma \ref{lema2}, we have 
\beq
\label{a16}
\aligned
D_0 &\leq \frac{C}{\varepsilon^{sp}} \int\limits_{\mathbb R^n}dU
\int\limits_{|\omega|=1}d\omega\int\limits_{|\sigma|=1} d\sigma\
\frac{|f(U+\varepsilon \omega) - f(U + \varepsilon \sigma)|}{|\omega -\sigma|^{n+sp-1}}^p\\
&=\frac{C}{\varepsilon^{n-1}}\int\limits_{\mathbb R^n} dU \int\limits_{|\omega|
=\varepsilon} d\omega \int\limits_{|\sigma|=\varepsilon} d\sigma\
\frac{|f(U+\omega) - f(U+\sigma)|}{|\omega - \sigma|^{n+sp -1}}^p \\
&=\frac{C}{\varepsilon^{n-1}}\int\limits_{\mathbb R^n} dU \int\limits_{|\omega|
=\varepsilon} d\omega \int\limits_{|\sigma|=\varepsilon} d\sigma\
\frac{|f(U+\omega-\sigma)- f(U)|}{|\omega - \sigma|^{n+sp-1}}^p \\
&=\frac{C}{\varepsilon^{n-1}}\int\limits_{\mathbb R^n} dU \int\limits_{|\omega|
=\varepsilon} d\omega \int\limits_{|\lambda-\omega|=\varepsilon} d\lambda\
\frac{|f(U+\lambda) - f(U)|^p}{|\lambda|^{n+sp -1}}.
\endaligned
\eeq

Here is another lemma needed in Step 3.1.

\begin{lemm} 
\l{lema5} Let $G(\lambda) \geq 0$ be any measurable function.  Then 
\beq
\label{a17}
H:=\int\limits_{|\omega|=\varepsilon} d\omega
\int\limits_{|\lambda-\omega|=\varepsilon} d\lambda G(\lambda) \leq C\left(\ve^{n-2}H_0+\ve^{n-1} \sum^n_{j=1}\ \sum^1_{q=-1}H_{j,q}\right),
\eeq
where
\be*
H_0:=\int\limits_{|\lambda|\leq 2\ve}d\lambda\, G(\lambda) ,\  H_{j,q}:= \int\limits_{\shortstack{$\scriptstyle|\widehat\lambda_j|\leq 2 \ve$ \\  $\scriptstyle \lambda_j = 2q\ve$}} d\widehat \lambda_j\, G(\lambda).
\ee*
\end{lemm}
Here, we use the standard notation $\widehat\lambda_j = (\lambda_1,
\ldots, \lambda_{j-1}, \lambda_{j+1}, \ldots, \lambda_n)$.
The vector $\widehat\omega_j$ is defined simlarly, and we let $\widehat{\lambda-\omega}_j=\widehat\lambda_j-\widehat\omega_j$.

\begin{proof}[Proof of Lemma \ref{lema5}]  We have 
\beq
\label{a18}
H = \sum^n_{j=1}\, \sum^n_{l= 1} \ \ \int\limits_{\shortstack{$\scriptstyle|\widehat\omega_j
|\leq\ve$ \\ $\scriptstyle |\omega_j| = \ve$}} d\widehat \omega_j
\ \
\int\limits_{\shortstack{$\scriptstyle|\widehat{\lambda-\omega}_l |\leq\ve$ \\ $\scriptstyle
|(\lambda-\omega)_l|=\ve$}} d\widehat \lambda_l\  G(\lambda):= 
\sum^n_{j=1}\ \sum^n_{l=1} E_{j,l}.
\eeq

We first estimate $E_{j,l}$ for $j\neq l$.  Assume e.g.           $j=1$,
$l = n$.  Then 
\beq
\label{a19}
\aligned
E_{1,n} &= \int\limits_{\shortstack{$\scriptstyle|\widehat\omega_1|\leq \ve$ \\ $\scriptstyle \omega_1 = \pm \ve$}} d\widehat
\omega_1\ \  \int\limits_{\shortstack{$\scriptstyle|\widehat {\lambda - \omega}_n|\leq \ve$ \\ $\scriptstyle \lambda_n = \omega_n
\pm \ve$}} d\widehat \lambda_n\, G(\lambda)
\\
&\leq 2\int\limits_{|\lambda| \leq 2\ve}d\lambda\,  G(\lambda) 
\int\limits_{|\omega_k|\leq \ve,\,  \forall\, k\in \llbracket 2, n-1\rrbracket} d\omega_2 \ldots d
\omega_{n-1} \leq C\ve^{n-2} \int\limits_{|\lambda|\leq 2\ve}d\lambda\,  G (\lambda) .
\endaligned
\eeq

Let now $j=l$.  Assume e.g. $j=l=n$. Since $\omega_n = \pm
\ve$ and $\lambda_n = \omega_n\pm \ve$, we have $\lambda_n\in \{
-2\ve, 0, 2\ve\}$.  Therefore,
\beq
\label{a20}
\aligned
E_{n,n}&\le 2\int\limits_{|\widehat\omega_n|\leq \varepsilon}d\widehat\omega_n\ \ 
\int\limits_{\shortstack{$\scriptstyle|\widehat{\lambda-\omega}_n|\leq \ve$ \\ $\scriptstyle \lambda_n \in\{0, \pm
2\ve\}$}}d\widehat\lambda_n\, G(\lambda)\\
&\leq C\int\limits_{\shortstack{$\scriptstyle|\widehat\lambda_n|\leq 2 \ve$ \\ $\scriptstyle \lambda_n\in\{0, \pm 2
\ve\}$}}d\widehat\lambda_n \int\limits_{|\widehat\omega_n|\leq \ve}d\widehat\omega_n\,  G(\lambda)= C\ve^{n-1} \sum^1_{q=-1}\  \int\limits_{\shortstack{$\scriptstyle|\widehat\lambda_n|\leq 2 \ve$ \\ $\scriptstyle
\lambda_n=2q\ve$}} d\widehat\lambda_n\, G(\lambda).\endaligned
\eeq

Lemma \ref{lema5} follows from \eqref{a18}-\eqref{a20}.
\end{proof}

\bigskip
\noindent{\bf Step 3.1 continued.} Recall that we look for an estimate of the form $D_0\le a(\ve)+b(\ve)$.

  By \eqref{a16} and Lemma \ref{lema5} applied with 
\be*
G(\lambda) =G(U,\lambda)= \frac{|f(U+\lambda) - f(U)|^p}{|\lambda|^{n+sp-1}},
\ee*
we find that
\beq
\label{a21}
\aligned
D_0 &\leq C\frac{1}{\ve }\int\limits_{\mathbb R^n} dU\int\limits_{ |\lambda|\leq2\ve}d\lambda\  G(U,\lambda)+ C\sum^n_{j=1}
\sum^1_{q=-1}\ \int\limits_{\mathbb R^n} dU  \int\limits_{\shortstack{$\scriptstyle|\widehat\lambda_j|\leq 2 \ve$ \\ $\scriptstyle\lambda_j =
2q\ve$}}d\widehat\lambda_j\, G(U,\lambda)\\
&:= C(P_0 + \sum^n_{j=1} \left(P_{j,0} + P_{j,2\ve} + P_{j,-2\ve})\right).
\endaligned
\eeq

In view of the above, we will establish estimates of the form $P\le a(\ve)+b(\ve)$, where $P$ is one of the $P_0$, $P_{j,0}$, $P_{j,\pm 2\ve}$.

\medskip
\noindent{\bf Estimate of $P_0$.}  We have 
\beq
\label{a22}
\aligned
\int\limits^1_0  d\ve\, \frac{P_0}{\ve} &= \int\limits_{\mathbb R^n}
dU\int\limits^1_0\frac{d\ve}{\ve^2}\int\limits_{|\lambda|\leq2\ve} d\lambda \
\frac{|f(U+\lambda) -f(U)|^p}{|\lambda|^{n+sp-1}}\\
&=\int\limits_{\mathbb R^n} dU \int\limits_{|\lambda|\leq 2} d\lambda\
\frac{|f(U+\lambda)
-f(U)|^p}{|\lambda|^{n+sp-1}}
\int\limits^1_{|\lambda|/2}\frac{d\ve}{\ve^2}\\
&\leq C\int\limits_{\mathbb R^n} dU\int\limits_{|\lambda|\leq 2} d\lambda\
\frac{|f(U+\lambda)-f(U)|^p}{|\lambda|^{n+sp}}\\
&=C\iint\limits_{|X-Y|<2} dXdY\
\frac{|f(X)-f(Y)|^p}{|X-Y|^{n+sp}}<\infty.
\endaligned
\eeq

\noindent{\bf Estimate of $P_{j,2\ve}$.} (A similar estimate holds for 
$P_{j,-2\ve}$.) Assume e.g. $j=n$.  Then
\be*
P_{n, 2\ve} = \int\limits_{\mathbb R^n} dU\int\limits_{|\widehat\lambda_n|\leq
2\ve}d\widehat\lambda_n \
\frac{|f(U+(\widehat \lambda_n, 
2\ve))-f(U)|^p}{(2\ve)^{n+sp-1}},
\ee*
so that
\beq
\label{a23}
\aligned \int\limits_0^1d\ve\, \frac{P_{n,2\ve}}{\ve} 
&= C \int\limits^1_0 \frac{d\ve}{\ve^{n+sp}} \int\limits_{\mathbb R^n} dU
\int\limits_{|\widehat\lambda_n|\leq 2\ve}d\widehat\lambda_n\  |f(U+(\widehat \lambda_n, 2\ve))-f(U)|^p\\
&=C \int\limits_{\mathbb R^n} dU\int\limits_{|\lambda|=\lambda_n\leq 2}d\lambda\  \frac{|f(U+\lambda)-f(U)|^p}{|\lambda|^{n+sp}}\\
&\leq C \int\limits_{\mathbb R^n} dU\int\limits_{|\lambda|\leq 2}d\lambda\  \frac{|f(U+\lambda)-f(U)|^p}{|\lambda|^{n+sp}}< \infty.
\endaligned
\eeq

\noindent{\bf Estimate of $P_{j,0}$.}  Assume $j=n$.  Then
\beq
\label{a24}
P_{n,0}= \int\limits_{\mathbb R^n} dU \int\limits_{|\widehat\lambda_n|\leq 2\ve}
d\widehat\lambda_n\
\frac{|f(U+(\widehat\lambda_n,
0))-f(U)|^p}{|\widehat\lambda_n|^{n+sp-1}}.
\eeq
In order to estimate $P_{n,0}$, we rely on a variant of a well-known lemma due to Besov \cite[proof of Lemma 7.44, p. 208]{a}, more precisely
\begin{lemm}
\l{lema6}  We have, for $1\leq l \leq n$,
\beq
\label{a25}
\aligned
R_l:&=\int\limits_{\mathbb R^n} dU \int\limits_{|\lambda_k| \leq \ve,\, \forall\, k \leq l}
d\lambda_1\ldots d\lambda_l \
\frac{\left|f\left(U+\d\sum^l_{k=1} \lambda_k e_k\right)-f(U)\right|^p}{|(\lambda_1,\ldots,\lambda_l)|^{l+sp}}\\
&\leq C\int\limits_{\mathbb R^n} dU \int\limits_{|\lambda|\leq 2\ve} d\lambda\ 
\frac{|f(U+\lambda)-f(U)|^p}{|\lambda|^{n+sp}}.\endaligned
\eeq
\end{lemm}
The standard form of Lemma \ref{lema6} corresponds corresponds to $l=1$. The proof we present below for arbitrary $l$ is essentially the same as for $l=1$.
\begin{proof}[Proof of Lemma \ref{lema6}] For
$\lambda' = (\lambda_1,\ldots, \lambda_l) \in \mathbb R^l$ and $ U\in
\mathbb R^n$, let $\d W=W_{\lambda',U}:=U +\sum^l_{k=1} \lambda_k e_k$. Let $Q=Q_{\lambda',U}$ be the cube centered at the midpoint
of the segment $[U, W]$ and of sidelength
$\d\frac{1}{4}|\lambda'|$.  For any $V\in Q$, we have
\beq
\label{a26}
\left|f\left(U+\sum^l_{k=1} \lambda_k e_k\right) - f(U)\right|^p \leq C (|f(V) - f(U)|^p +
|f(V) - f(W) |^p).
\eeq
By taking the average integral of \eqref{a26} in $V$ over $Q$ , we find that
\be*
\aligned
R_l&\leq C\int\limits_{\mathbb R^n} dU \int\limits_{|\lambda' |\leq\ve}d\lambda' \int\limits_{Q}
\frac{dV}{|\lambda'|^{l+sp+n}}\ \{|f(V)- f(U)|^p+ |f(V)-f(W)|^p\}\\
&=2C\int\limits_{\mathbb R^n} dU \int\limits_{|\lambda' |\leq\ve}d\lambda' \int\limits_Q dV\ \frac{|f(V) -
f(U)|^p}{|\lambda'|^{l+sp+n}}.\endaligned
\ee*
Noting that $|V-U|\le\d  |\lambda'|$, we obtain
\be*
\aligned
R_l&\leq C\int\limits_{\mathbb R^n} dU \int\limits_{|V-U|\leq 2
\ve}dV\ |f(V)-f(U)|^p\int\limits_{1/2 |V-U|\leq |\lambda'|\leq
\ve}\frac{d\lambda'}{|\lambda'|^{l+sp+n}}\\
&\leq C\int\limits_{\mathbb R^n} dU \int\limits_{|V-U|\leq 2 \ve} dV\  \frac{|f(V) -
f(U)|^n}{|V-U|^{n+sp}}.\endaligned
\ee*

The proof of Lemma \ref{lema6} is complete.
\end{proof}

\bigskip
\noindent{\bf Step 3.1. completed.}
Lemma \ref{lema6} and \eqref{a24} imply that
\be*
P_{j,0} \longrightarrow 0 \text{ as } \ve \longrightarrow 0,\ \forall\, j\in\llbracket 1,
n\rrbracket.
\ee*

Step 3.1 is now complete.

\bigskip
\noindent{\bf Step 3.2.}  Estimate of $D_L$ when $L\in \mathbb Z^n$ and
$|L|=1$. Similarly to Step 3.1, we will establish an estimate of the form $D_L\le a(\ve)+b(\ve)$.\\
Recall that 
\be*
D_L = \frac{1}{\ve^n} \int\limits_{Q_\ve} dT \sum_{K\in \mathbb Z^n}\
\int\limits_{Q_\ve(T+2\ve K)} dX\ \int\limits_{Q_\ve(T+2\ve(K+L))}dY\ \frac{|f_T(X) -
f_T(Y)|^p}{|X-Y|^{n+sp}}.
\ee*
If we set $V=V_U=U+2\ve L$, $X_U=U+\d \frac{X-U}{|X-U|}$ and $Y_V=V+\d \frac{Y-V}{|Y-V|}$, then we have 
\be*
D_L= \frac{1}{\ve^n}\int\limits_{\mathbb R^n} dU
\int\limits_{Q_\ve(U)}dX\int\limits_{Q_\ve(V)} dY\
\frac{|f(X_U)-f(Y_V)|^p}{|X-Y|^{n+sp}}.
\ee*
In polar coordinates, we obtain
\be*
\aligned
D_L &= \frac{1}{\ve^n} \int\limits_{\mathbb R^n}d U \int\limits^\ve_0 \delta^{n-1}
d\delta\int\limits^\ve_0\lambda^{n-1} d\lambda\int\limits_{|\omega|=1} d\omega
\int\limits_{|\sigma|=1}d\sigma\ 
\frac{|f(U+\ve\omega)-f(V+\ve\sigma)|^p}{|\delta\omega -\lambda\sigma-2\ve L|^{n+sp}}\\
&= \frac{1}{\ve^n} \int\limits_{\mathbb R^n} dU\int\limits_{|\omega|=1}d\omega
\int\limits_{|\sigma|=1}d\sigma \  |f(U+\ve\omega) - f(V+
\ve\sigma)|^p\, k(\omega, \sigma),\endaligned
\ee*
where
\be*
k(\omega, \sigma) = \int\limits^\ve_0\delta^{n-1} d\delta\int\limits^\ve_0
\lambda^{n-1}d\lambda\ \frac{1}{|\delta\omega -\lambda\sigma-2\ve L|^{n+sp}}.
\ee*

To estimate $D_L$, 
we rely on a variant of Lemma \ref{lema2} (which formally corresponds to  $L=0$ in \eqref{a27}).
\begin{lemm}
\l{lema7}
Assume that $sp<n$. For $|\omega| = |\sigma| =1$ and $L\in\mathbb Z^n$
with $|L|=1$ we have
\beq
\label{a27}
k(\omega, \sigma) \leq \frac{C\ve^{n-sp}}{|\omega-\sigma-
2L|^{n+sp-1}}.
\eeq
\end{lemm}
\begin{proof}[Proof of Lemma \ref{lema7}] We have
\beq
\label{ab1}
k(\omega,\sigma) = \ve^{n-sp}\int\limits^1_0 t^{n-1}dt \int\limits^1_0 \tau^{n-1}
d\tau\  \frac{1}{|t\omega-\tau \sigma-2L|^{n+sp}}.
\eeq
We claim that
\beq
\label{a28}
|t\omega-\tau \sigma-2L|\geq C|t\omega- \sigma-2L|.\eeq
Indeed, when $0\leq \tau \leq 1/2$, inequality \eqref{a28} is clear, since
in this case we have 
$\d |t\omega -  \sigma-2L|\leq 4\text{ and }|t\omega-\tau\sigma-2L|\geq 2- t-\tau\geq 1/2$.\\
Assume now $\tau \geq 1/2$.  We consider the map 
\be*
\varphi:\overline Q_1 \cup (\overline Q_1(2L) \smallsetminus Q_{1/2} (2L))
\longrightarrow \mathbb R^n,
\ee*
defined by
\be*
\varphi(X)=\begin{cases} X,  & \text{ if } X\in \overline Q_1\\
\d 2L + \frac{X-2L}{|X-2L|}, & \text{ if } X\in \overline Q_1(2L)\smallsetminus
Q_{1/2}(2L)\end{cases}.
\ee*

It is easy to check that $\va$ is well-defined, in the sense that
\be*
X=2L + \frac{X-2L}{|X-2L|}\ \text{for every }X\in \overline Q_1\cap \overline Q_1(2L).
\ee*

Note that, in $\d \overline Q_1(2L)\smallsetminus
Q_{1/2}(2L)$, $\varphi$ is  the radial  projection centered at $2L$ on $\p Q_1(2L)$.
Clearly, $\varphi$ is Lipschitz.  Inequality \eqref{a28} for $1/2 \leq \tau
\leq 1$ is now obvious, since it reads
\be*
|\varphi(t\omega) - \varphi(\tau\sigma+2L)| \leq \frac{1}{C} |t\omega
- (\tau\sigma+2L)|.
\ee*
Combining \eqref{ab1} and \eqref{a28}, we obtain 
\beq
\label{a31}
k(\omega, \sigma) \leq C\ve^{n-sp} \int\limits^1_0 t^{n-1}\
\frac{dt}{|t\omega- \sigma-2L|^{n+sp}}.
\eeq
Applying \eqref{gene} with $\omega$ replaced by
$\sigma+2L$ and $\sigma$ replaced by $\omega$ (here, we use $sp<n$), we obtain \eqref{a27} from
\eqref{gene} and \eqref{a31}.

The proof of Lemma \ref{lema7} is complete.
\end{proof}

\bigskip
\noindent{\bf Step 3.2 continued.} We continue our way to an estimate of the form $D_L\le a(\ve)+b(\ve)$.

By Lemma \ref{lema7} we obtain
\be*
\aligned
D_L &\leq\frac{C}{\ve^{sp}}\int\limits_{\mathbb R^n} dU \int\limits_{|\omega|=1} d\omega
\int\limits_{|\sigma|=1} d\sigma \ \frac{|f(U+\ve\omega) - f(U + \ve
\sigma+2\ve L)|^p}{|\omega- \sigma-2L|^{n+sp-1}}\\
& =
\frac{C}{\ve^{n-1}}\int\limits_{\mathbb R^n} dU \int\limits_{|\omega|=\ve} d\omega
\int\limits_{|\sigma|=\ve} d\sigma\  \frac{|f(U)- f(U+2\ve L +
\sigma-\omega)|^p}
{|2 \ve L+\sigma-\omega|^{n+sp-1}}.
\endaligned
\ee*
Thus
\beq
\label{a311}
D_L\leq\frac{C}{\ve^{n-1}}\int\limits_{\mathbb R^n} dU \int\limits_{|\omega|=\ve} d\omega
\int\limits_{|\lambda+ \omega-2\ve L |=\ve} d\lambda\ \frac{|f(U) - f(U+\lambda)|^p}{|\lambda|^{n+sp-1}}.
\eeq
We combine \eqref{a311} with the following straightforward variant of Lemma
\ref{lema5}, whose proof is left to the reader:

\begin{lemm} 
\l{lema8} 
Let $G(\lambda) \geq 0$ be any measurable function.  Then for $L\in \mathbb
Z^n$ with $|L|=1$ we have 
\beq
\label{a32}
H:=\int\limits_{|\omega|=\ve} d\omega
\int\limits_{|\lambda+\omega-2\ve L|=\ve} d\lambda
\, G(\lambda)\leq C\left(\ve^{n-2}H_0+\ve^{n-1} \sum^n_{j=1}\ \sum_{q=-2}^2H_{j,q}\right),
\eeq
where 
\be*
H_0:=\int\limits_{|\lambda|\leq 4\ve}d\lambda\,  G(\lambda) ,\  H_{j,q}:=\int\limits_{\shortstack{$\scriptstyle|\widehat\lambda_j|\leq 4
\ve$ \\ $\scriptstyle \lambda_j = 2q\ve$}}d\widehat\lambda_j\, G(\lambda).
\ee*
\end{lemm}

\bigskip
\noindent{\bf Step 3.2 completed.}   By \eqref{a311} and Lemma \ref{lema8}, we obtain
\beq
\label{a33}
\aligned
D_L\leq&  \frac C\ve \int\limits_{\mathbb R^n} dU \int\limits_{|\lambda|\leq 4
\ve} \frac{|f(U+\lambda) - f(U)|^p}{|\lambda|^{n+sp-1}}d\lambda \\
& +C \sum^n_{j=1}\  \sum^2_{q=-2}\ \int\limits_{\shortstack{$\scriptstyle|\widehat\lambda_j|\leq 4\ve$ \\ $\scriptstyle \lambda_j =
2q\ve$}} d\widehat\lambda_j \frac{|f(U+\lambda) -
f(U)|^p}{|\lambda|^{n+sp-1}}.
\endaligned
\eeq
Estimate \eqref{a33} is similar to \eqref{a21} and we handle it in the same
way.

The proof of Lemma \ref{lema1} is complete.
\end{proof}

We end this section by proving that, in $W^{1,p}$, approximation by piecewise homogeneous maps fails. The special case we treat below ($p=1$, $n=2$)  is easily generalized to any dimension or $p$.
\begin{lemm}
\l{lema9}
Let $u(x_1,x_2)=x_1$. Then there is no sequence $\{u_k\}$ of piecewise $1$-homogeneous maps associated to meshes contained in $(-1,2)^2$ such that $u_k\to u$ in $W^{1,1}((0,1)^2)$.
\end{lemm}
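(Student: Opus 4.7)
The plan is to exploit the rigid pointwise constraint that $1$-homogeneity imposes on $u_k$ inside each mesh cube. On a cube $Q_\varepsilon(c) = c + (-\varepsilon,\varepsilon)^2$, since $u_k$ is constant along rays from $c$ in the $|\cdot|$-sense, we have $(x - c)\cdot \nabla u_k(x) = 0$ a.e. The target $u(x) = x_1$ satisfies no such relation, so there should be an $L^1$-obstruction. The task is to make it uniform across the mesh and sum it up.

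The key step will be a scale-invariant lower bound: for any $1$-homogeneous $v$ on $Q_1(0) = (-1,1)^2$ from the origin and any $\mu \geq 0$,
\[
\int_{Q_1(0)} |\partial_1 v - \mu|\, dy \geq 2\mu.
\]
I would prove this by splitting $Q_1(0)$ into the four triangles determined by which face of $\partial Q_1(0)$ is met by the ray from $0$ through $y$. On the top triangle $\{|y_1| < y_2 < 1\}$, $v(y) = g(y_1/y_2,1)$ where $g = v|_{\partial Q_1(0)}$; the substitution $(t,a) = (y_1/y_2, y_2)$ (Jacobian $a$) reduces the contribution of that triangle to $\int_0^1\int_{-1}^1 |g'(t,1) - \mu a|\, dt\, da$, which is at least $\int_{-1}^1 \min_\alpha \int_0^1 |\alpha - \mu a|\, da\, dt = \mu/2$ using the $1$D median computation $\min_\alpha \int_0^1 |\alpha - \mu a|\, da = \mu/4$, attained at $\alpha = \mu/2$. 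The three other triangles give the same bound by symmetry, summing to $2\mu$. Rescaling $v(y) = u_k(c + \varepsilon y)$ translates this into the cube-level estimate
\[
\int_{Q_\varepsilon(c)} |\partial_1 u_k - 1|\, dx \geq 2\varepsilon^2.
\]

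From here I would split into two cases according to whether the mesh size stays bounded below. If $\varepsilon_k \to 0$ along a subsequence, the number of mesh cubes entirely contained in $(0,1)^2$ is at least $(1/(2\varepsilon_k) - 2)^2 \sim 1/(4\varepsilon_k^2)$; summing the cube-level bound yields $\|\partial_1 u_k - 1\|_{L^1((0,1)^2)} \geq (1 - 4\varepsilon_k)^2/2 \to 1/2$, contradicting $W^{1,1}$-convergence. If instead $\varepsilon_k \geq \varepsilon_0 > 0$ along a subsequence (and necessarily $\varepsilon_k \leq 3/2$ since the mesh lies in $(-1,2)^2$), I pass to a further subsequence for which a mesh cube $Q_{\varepsilon_k}(c_k)$ with positive-measure intersection with $(0,1)^2$ satisfies $c_k \to c^*$, $\varepsilon_k \to \varepsilon^* > 0$. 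Any open $R$ with $\overline R \subset Q_{\varepsilon^*}(c^*) \cap (0,1)^2$ lies in $Q_{\varepsilon_k}(c_k)$ for $k$ large, so $(x - c_k) \cdot \nabla u_k = 0$ on $R$; passing to the limit in $L^1(R)$ (using $c_k \to c^*$ and $\nabla u_k \to (1,0)$ in $L^1$) gives $x_1 - c_1^* \equiv 0$ on the open set $R$, a contradiction. The main obstacle is the scale-invariant bound: one needs a lower bound \emph{linear} in $\mu$, so that $2\varepsilon^2$ per cube, summed over $O(\varepsilon^{-2})$ cubes, produces a constant of order one. The triangle decomposition combined with the change of variables that exposes the $\mu a$-dependence is precisely what isolates the right $1$D median problem and delivers the correct linear scaling.
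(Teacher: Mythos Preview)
Your proof is correct and follows the same global architecture as the paper's argument: argue by contradiction, split according to whether the mesh size stays bounded below, and in each case exploit that the radial derivative of $u_k$ vanishes on every mesh square. The execution differs, however.

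For the small-mesh case, the paper proceeds pointwise: since $\partial_V u_k=0$ along the Euclidean radial direction $V=(X-S)/\|X-S\|$, one has directly $|\nabla u_k-\nabla u|\ge |\partial_V u|=|V_1|$, and integrating $|V_1|$ over a square of radius $l_k$ gives $C\,l_k^2$ by scaling. You instead prove a quantitative integral inequality, $\int_{Q_1}|\partial_1 v-\mu|\ge 2\mu$, via a decomposition into four triangles, a change of variables that separates the radial and angular coordinates, and a one-dimensional median computation. Your route is longer but yields the explicit constant $2\ve^2$ per cube (hence an asymptotic lower bound $1/2$ for $\|\partial_1 u_k-1\|_{L^1}$), whereas the paper is content with an unspecified positive constant $C$.

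For the large-mesh case, the paper keeps the same pointwise lower bound $|V_1|$ and passes to the limit by dominated convergence; you instead pass the Euler identity $(x-c_k)\cdot\nabla u_k=0$ to the limit in $L^1$ on a fixed subdomain, obtaining $x_1-c_1^*\equiv 0$. Both arguments are short; yours is slightly more direct in that it avoids estimating an integrand depending on the moving centers.

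One small point to make rigorous in your write-up: the formulas $v(y)=g(y_1/y_2,1)$ and the appearance of $g'$ tacitly assume that the boundary datum $g$ lies in $W^{1,1}$ of each edge. This is indeed a consequence of $v\in W^{1,1}(Q_1)$ together with $0$-homogeneity (the change of variables you use shows $\|h'\|_{L^1(-1,1)}=\|\partial_1 v\|_{L^1(\text{triangle})}<\infty$), but you should say so explicitly. Alternatively, you can bypass this by working, as the paper does, with the a.e.\ identity $y\cdot\nabla v=0$ instead of introducing $g'$.
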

Note that the conclusion of the lemma is that not only the estimates given by  Lemma \ref{lema1} do not hold  when $s=1$, but also that any  possible approximation method  relying on piecewise homogeneous maps  fails.
\begin{proof}
We argue by contradiction and assume that there exists a sequence $\{ u_k\}$ of piecewise $1$-homogeneous maps associated to meshes contained in $(-1,2)^2$ such that  $u_k\to u$ in $W^{1,1}$. Let $u_k$ be piecewise $1$-homogeneous on the mesh ${\mathcal C}^k$, with $(0,1)^2\subset{\mathcal C}^k\subset (-1,2)^2$. Let $2l_k$  (with $l_k\le 2$) be the size of the squares in ${\mathcal C}_k$ and set 
\be*
{\mathcal D}_k=\{ Q\in {\mathcal C}_k\ ;\ Q\subset (0,1)^2\}.
\ee*
Clearly, there is some $l^0>0$ such that
\beq
\label{impl}
\text{if } l_k<l^0, \text{ then }\bigg|\bigcup_{Q\in {\mathcal D}_k}Q\bigg|\ge 1/2.
\eeq
We distinguish two possibilities:

\medskip
\noindent
{\bf Case 1.} $l_k<l^0$\\
Let $S$ be the center of $Q=Q_{l_k}(S)\in {\mathcal D}_k$. For $X\in Q\setminus\{S\}$, set $V=V(X)=(X-S)/\|X-S\|$; here, $\|\ \|$ stands for the Euclidean norm. Since $u_k$ is constant along the segment  $[S, X]$, we have $\d\frac{\p u_k}{\p V}(X)=0$ a.e. in $X$. Therefore, 
\beq
\label{abab}
|(\nabla u_k-\nabla u)(X)|\ge \left|\frac{\p u_k}{\p V}(X)-\frac{\p u}{\p V}(X)\right|=|V_1|.
\eeq
We find that
\be*
\int\limits_Q|\nabla u_k-\nabla u|\ge \int\limits_Q|V_1|\ge Cl_k^2=C|Q|;
\ee*
the last inequality follows by scaling. Using \eqref{impl}, we find that 
\beq
\label{con1}
\|u_k-u\|_{W^{1,1}((0,1)^2)}\ge C.
\eeq
Thus, for large $k$, we are in 

\medskip
\noindent
{\bf Case 2.} $l_k\ge l^0$\\
Possibly after passing to a subsequence, we may assume that:
\begin{enumerate}[a)]
\item
$l_k\to l$ for some $l\ge l^0$.
\item
All the meshes ${\mathcal C}_k$ contain the same number of squares, say $m$.
\item
The centers of the squares $Q_{1,k},\ldots, Q_{m,k}$ in ${\mathcal C}_k$, say $S_{1,k},\ldots, S_{m,k}$, converge respectively to $S_1,\ldots, S_m$.
\end{enumerate}
Set $Q_j=Q_l(S_j)$. By \eqref{abab} and dominated convergence, we have
\be*
\lim_k\int\limits_{(0,1)^2}|\nabla (u_k-u)|\ge \lim_k\sum_j\int\limits_{Q_{j,k}\cap (0,1)^2}\frac{|(X-S_{j,k})_1|}{|X-S_{j,k}|}=\sum_j\int\limits_{Q_{j}\cap (0,1)^2}\frac{|(X-S_{j})_1|}{|X-S_{j}|}>0.
\ee*
This contradiction completes the proof of Lemma \ref{lema9}.
\end{proof}

\def\noin{\noindent}
\def\rhpd{\rightharpoondown}

\section{A more general approximation method}
\l{appb}
The approximation method described in Section \ref{appa} goes as follows: fix
some $\varepsilon > 0$ and $T\in \mathbb R^n$.  Consider the mesh $\mathscr
C_n$ of $n$-dimensional cubes of sidelength $2\ve$ having $T$ as one of
the centers.  Let $\mathscr C_{n-1}$ be the $(n-1)$-dimensional skeleton
associated to this mesh, i.e., $\mathscr C_{n-1}$ is the union of the
boundaries of the cubes in $\mathscr C_n$. 
  Let $H_n$ be the mapping that
associates to every $g: \mathscr C_{n-1}\to \mathbb R^m$ its homogeneous
extension (on each cube of $\mathscr C_n$) to $\mathbb R^n$.  Lemma \ref{lema1} asserts that, if $0< s < 1,
sp < n$ and $f\in W^{s, p}(\mathbb R^n; \mathbb R^m)$, then 
$H_n(f|_{\mathscr C_{n-1}}) \to f$ in $W^{s, p}(\mathbb R^n)$ for some suitable
choice of $\ve_k \to 0$ and $T_k \in \mathbb R^n$.
\medskip

We will describe below a more general situation.  We start by defining
the lower dimensional skeletons associated to $\mathscr C_n$. This is done
by backward induction: $\mathscr C_{n-2}=\mathscr C_{n-2, T}$ is the union of the
$(n-2)$-dimensional boundaries of the cubes in $\mathscr C_{n-1}=\mathscr C_{n-1, T}$,   
and so on.  For $g: \mathscr C_j\to \mathbb R^m$, let $H_{j+1}(g)$ be its
homogeneous extension to $\mathscr C_{j+1}$.

Let $0\leq j< n$.  For $\ve > 0$ and $T\in \mathbb R^n$, we associate to
each map $f: \mathbb R^n\to \mathbb R^m$ a map $f_T= f_{T,\ve}: \mathbb R^n \to
\mathbb R^m$ through the formula
\beq
\label{fgh1}
f_T = H_n(H_{n-1} (\cdots (H_{j+1} (g))\cdots));\ \text{here, we set }g=f|_{\mathscr C_j}.
\eeq
We start by deriving a useful formula for $f_T$. For this purpose and for further use, we start by introducing some (slightly abusive) notation that we discuss in some length in the next four paragraphs.

In order to keep  notation easier to follow, we will sometimes denote a point in $Q_\ve$ by $X^n$ rather than $X$. We denote by $X^{n-1}$ the radial   projection (centered at $0$) of $X^n\in Q_\ve$ onto the $(n-1)$-skeleton of $\p Q_\ve\cap {\cal C}_{n-1, 0}$ of $Q_\ve$; this projection is defined except when $X^n=0$.  
The abuse of notation is that $X^{n-1}$ also denotes a \enquote{generic} point of $\p Q_\ve\cap {\cal C}_{n-1, 0}$. 
We next let $X^{n-2}$ denote the radial projection of $X^{n-1}$ onto the $(n-2)$-skeleton of $\p Q_\ve\cap {\cal C}_{n-2, 0}$ of $Q_\ve$. The point $X^{n-2}$ is obtained as follows: if $X^{n-1}\in  \p Q_\ve\cap {\cal C}_{n-1, 0}$ belongs to an  $(n-2)$-dimensional face $F$ of $\p Q_\ve\cap {\cal C}_{n-1, 0}$ and is not the center $C$ of $F$, then the radial projection (centered at $C$) of $X^{n-2}$ on $F$ is well-defined, and yields $X^{n-2}$. By backward induction, we define $X^j$, $j=\llbracket 0, n-1\rrbracket$, as the radial projection of $X^{j+1}$ onto $\p Q_\ve\cap {\cal C}_{j, 0}$; this is defined for all but a finite number of $X^{j+1}$s. Again, with an abuse of notation $X^j$ is the \enquote{generic} point of $\p Q_\ve\cap {\cal C}_{j, 0}$.

When $X^j$ is obtained starting from $X^n$, we will denote $X^j$ as the radial projection of $X^n$ onto $\p Q_\ve\cap {\cal C}_{j, 0}$. This projection is defined except on a set of finite ${\cal H}^{n-j-1}$ measure.

More generally, let $j<\ell\le n$. We identify $X^\ell$ with a \enquote{generic} point of   $\p Q_\ve\cap{\cal C}_{\ell,0}$. Then  $X^j$ is, except on a set of finite ${\cal H}^{\ell-j-1}$ measure, the projection of $X^\ell$  onto $\p Q_\ve\cap {\cal C}_{j, 0}$.

Let $K\in \mathbb Z^n$ and set $U = T+2\ve K$. Then we we define the radial projection of $U+ X^n$ onto $\mathscr C_{j}$ as $U + X^{j}$.  This is consistent with the projection we defined when $j=n-1$ and makes sense for ${\cal H}^n$-a.e. $X^n\in Q_{\ve}$. Moreover, if $j<\ell\le n$, then for ${\cal H}^\ell$-a.e. $X^\ell\in \p Q_\ve\cap{\cal C}_{\ell,0}$, the 
projection of $U+ X^\ell$ onto $\mathscr C_{j}$ is $U + X^{j}$.
 
 With the above notation, formula \eqref{fgh1} is equivalent to
\beq
\label{fgh2}
f_T (T+2\ve K+X^n)= f(T+2\ve K+X^j),\ \forall\, K\in\Z^n,\text{ for }{\cal H}^n-\text{a.e. }X^n\in Q_\ve. 
\eeq

Our next task is to derive a convenient formula for $X^j$. 
 We consider the following
a.e. partition of $Q_\ve$:
\beq
\label{part}
Q_\ve = \bigcup_{q\in\{-1,1\}^{n-j}}\ \ \bigcup_{\sigma \in S_{n-j, n}}
Q_{\ve, q,\sigma}.
\eeq
Here, 
$\d
S_{n-j,n} = \{ \sigma:\{ 1, \ldots, n-j\} \to \{ 1, \ldots, n\};\,  \sigma
\text{ into } \}
$.\\
A point $X^n\in Q_\ve$ belongs to  $Q_{\ve, q,\sigma}$ provided:
\begin{enumerate}[a)]
\item The $\d\sigma(i)^{th}$ coordinate of $X^n$, denoted $(X^n)_{\sigma (i)}$, has the sign of $q_i$, for $i\in\llbracket 1,n-j\rrbracket$.
\item
In absolute value,  the largest coordinate of $X^n$ is $(X^n)_{\sigma (1)}$, the second largest is $(X^n)_{\sigma (2)}$,$\ldots$, the $\d (n-j)^{th}$ largest is $(X^n)_{\sigma (n-j)}$.
\end{enumerate}
Analytically, this means that $Q_{\ve, q,\sigma}$ is defined by the inequalities
\be*
q_{1}(X^n)_{\sigma(1)} \geq\cdots \geq
q_{n-j}(X^n)_{\sigma(n-j)}\geq |(X^n)_k | , \forall\, k \neq \sigma(1), \ldots, \sigma (n-j).
\ee*
Let, for $K\in\mathbb Z^n$, $U=T+2\ve K$. \be*
(X^{n-1})_{\sigma(1)} = \ve q_{1},\  (X^{n-1})_l =\ve
\frac{(X^n)_l}{|(X^n)_{\sigma(1)}|},\, \forall\, l\neq \sigma(1).
\ee*
Similary, one may check that the projection of $U+X^{n}$ onto $\mathscr C_{n-2}$ is
$U+X^{n-2}$, with 
\be*
(X^{n-2})_{\sigma(1)} = \ve q_{1},\, (X^{n-2})_{\sigma(2)}=\ve
q_{2}, \ 
(X^{n-2})_l = \ve \frac{(X^n)_l}{|(X^n)_{\sigma(2)}|},\, \forall\,  l\neq \sigma(1),
\sigma (2),
\ee*
and so on.  In particular, \eqref{fgh2} reads 
$\d
f_T(U+X^n) = f(U+X^j)$ for ${\cal H}^n$-a.e. $\d X^n\in Q_{\ve,q,\sigma}$, where
\be*
(X^j)_{\sigma(k)} = \ve q_{k},\,  \forall\, k \in\llbracket 1, n-j\rrbracket,\  (X^j)_l =
\ve\frac{(X^n)_l}{|(X^n)_{\sigma(n-j)}|},\,  \forall\, l\neq \sigma(1),
\ldots, \sigma (n-j).
\ee*

This section is essentially devoted to the proof of the following generalization
of Lemma \ref{lema1}.
\begin{lemm}
\l{lemb1}
 Let $0\leq j < n, 0 < s<1, sp < j+1$ and let
$f\in W^{s,p}(\mathbb R^n\,; \mathbb R^m)$.  Then there are sequences $\ve_k
\to 0$ and $\{T_k\}\subset \mathbb R^n$ such that $f_{T_k,\ve_k}\to f$ in
$W^{s,p}(\mathbb R^n)$.
\end{lemm}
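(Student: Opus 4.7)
The plan is to follow the template of the proof of Lemma \ref{lema1}: establish an averaged estimate
\[
\frac{1}{\varepsilon^n}\int_{Q_\varepsilon}\|f - f_{T,\varepsilon}\|_{W^{s,p}}^p\, dT \le a(\varepsilon) + b(\varepsilon),
\]
with $a(\varepsilon)\to 0$ and $\int_0^1 b(\varepsilon)/\varepsilon\, d\varepsilon < \infty$; Lemma \ref{lemb1} then follows by extracting a subsequence $\varepsilon_k \to 0$ along which $a(\varepsilon_k) + b(\varepsilon_k) \to 0$ and picking $T_k \in Q_{\varepsilon_k}$ accordingly. As in Section \ref{appa}, the left-hand side splits into an $L^p$ part $A$ and a seminorm part dominated by $C(I + J + D)$.

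For $A$, $I$ and $J$, the argument is insensitive to the value of $j$. The partition \eqref{part} together with the identity $f_T(U + X^n) = f(U + X^j)$ reduces each piece to an $L^p$-modulus of continuity of $f$ for translations of sup-norm at most $2\varepsilon$ (together, for $I$, with a pointwise bound of the iterated-polar Jacobian against the $W^{s,p}$-integrand). Since the estimate $\|X^n - X^j\|_\infty \le 2\varepsilon$ holds uniformly in $X^n \in Q_\varepsilon$, Steps 1 and 2 of Section \ref{appa} go through with only cosmetic changes, and $J \to 0$ as $\varepsilon \to 0$ is tautological.

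All the substantial new work goes into estimating $D = \sum_{|L|\le 1}D_L$. For fixed $|L|\le 1$ and $V = U + 2\varepsilon L$, I would apply \eqref{part} separately to $X^n = X - U$ and $Y^n = Y - V$, and introduce iterated cubic polar coordinates along the chains $X^n \to X^{n-1} \to \cdots \to X^j$ and $Y^n \to \cdots \to Y^j$. Using the explicit formulas for $X^j$ in terms of $X^n$ on each cell $Q_{\varepsilon,q,\sigma}$, $D_L$ rewrites as a finite sum of pieces of the form
\[
\frac{1}{\varepsilon^n}\int_{\R^n} dU \int_{F} d\omega^j \int_{F'} d\sigma^j\, |f(U + \omega^j) - f(V + \sigma^j)|^p\, k(\omega^j,\sigma^j),
\]
where $F$, $F'$ range over $j$-dimensional faces of $\p Q_\varepsilon \cap \mathscr{C}_{j,0}$ and $k(\omega^j,\sigma^j)$ is a $2(n-j)$-fold integral of $|X^n - Y^n - 2\varepsilon L|^{-(n+sp)}$ over the radial parameters of the iterated chain.

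The main obstacle, and the step where the hypothesis $sp < j+1$ is actually used, is the kernel estimate generalizing Lemmas \ref{lema2} and \ref{lema7}. Scaling every radial variable by $\varepsilon$ and applying \eqref{gene} at the innermost level of the chain, one of the resulting $t$-integrals takes the form $\int_0^1 t^{j-sp-1}\, dt$, which converges precisely when $sp < j + 1$; the outer $n-j-1$ levels contribute only absolutely convergent integrals $\int_0^1 t^{\ell-1}\, dt$ with $\ell \ge j+1$. Tracking the Jacobians of the iterated chain, this yields a bound
\[
k(\omega^j,\sigma^j) \le \frac{C\,\varepsilon^\alpha}{|\omega^j - \sigma^j - 2\varepsilon L|^{j + sp - 1}}
\]
for a suitable exponent $\alpha = \alpha(n,s,p,j)$. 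With this in hand, $D_L$ reduces to an integral of $|f(U+\omega^j)-f(V+\sigma^j)|^p/|\omega^j-\sigma^j - 2\varepsilon L|^{j+sp-1}$ over $U\in\R^n$ and over $\omega^j$, $\sigma^j$ on $j$-dimensional faces. One finishes by applying a variant of Lemma \ref{lema5} adapted to the $j$-skeleton, which separates a diagonal-type contribution (absorbed into $a(\varepsilon)$) from finitely many lower-dimensional boundary pieces (absorbed into $b(\varepsilon)$ after integration in $\varepsilon$), and invoking the Besov-type Lemma \ref{lema6} with $\ell \in \llbracket 1, j\rrbracket$ rather than only $\ell = 1$. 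The restriction $sp < j+1$ is sharp for this strategy: at $sp = j+1$ the innermost scaled $t$-integral diverges and the whole approach collapses.
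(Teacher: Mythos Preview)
Your overall plan matches the paper's: the same $A$, $I$, $J$, $D = \sum_{|L|\le 1} D_L$ decomposition, a kernel estimate generalizing Lemmas \ref{lema2} and \ref{lema7}, and a final reduction via Lemma \ref{lema6}. But there is a concrete off-by-one error and one genuinely missing step.

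The kernel exponent is wrong. The correct bound (Lemma \ref{lemb2} in the paper, proved by induction on $n$) is
\[
k(\omega^j,\sigma^j) \le \frac{C\,\varepsilon^{j+sp}}{|X^j - (2\varepsilon L + Y^j)|^{\,j+sp}},
\]
not $j+sp-1$. When $j=n-1$ this reads $|\cdot|^{n+sp-1}$, which is precisely Lemma \ref{lema2}; you seem to have carried the ``$-1$'' over as a separate feature rather than noticing that $n+sp-1 = j+sp$ in that case. Relatedly, the critical scaled integral at the innermost level of the chain is $\int_0^1 t^{j-sp}\, dt$ (the $(j{+}1)$-dimensional instance of $\int_0^1 t^{n-sp-1}\, dt$ from the proof of Lemma \ref{lema2}), not $\int_0^1 t^{j-sp-1}\, dt$; the latter converges only for $sp<j$, contradicting your own claim. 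With the exponent $j+sp-1$, the resulting double integral over $j$-faces has the wrong homogeneity for a $W^{s,p}$ seminorm on a $j$-dimensional set, and the reduction to Lemma \ref{lema6} breaks down.

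Second, after the kernel bound the paper does \emph{not} use a variant of Lemma \ref{lema5}. Instead it writes out the coordinates of $X^j - Y^j$ explicitly in terms of the overlap sets $(\sigma,\tau)_1$, $(\sigma,\tau)_2$ of the two face labelings; see \eqref{b19}--\eqref{abae}. This produces a finite family of integrals $I_{l,m}$ over affine slices of $\R^n$ of various codimensions, each then controlled by Lemma \ref{lema6} with varying $\ell$. A ``variant of Lemma \ref{lema5} adapted to the $j$-skeleton'' is not an object with an obvious statement: the combinatorics of pairs of $j$-faces of a cube is substantially richer than that of pairs of $(n{-}1)$-faces, and this combinatorics is precisely what \eqref{b19}--\eqref{abae} and the subsequent case analysis (Cases 1--3 for $I_{l,m}$) encode. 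Your sketch elides this, and it is the part of the argument where most of the work sits once the kernel bound is in hand.
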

Note that Lemma \ref{lema1} corresponds to $j=n-1$.
\begin{proof}[Proof of Lemma \ref{lemb1}]  The proof of Lemma \ref{lemb1} is similar
to that of Lemma \ref{lema1}, some computations being essentially identical.
An additional difficulty appears in the estimate of $D$ (for the
definition of $D$, see \eqref{defd}).  In order to facilitate the
presentation we use the same notation as in Section \ref{appa}, and follow the steps in Section \ref{appa}. Let us recall that our goal is to obtain estimates of the form
\be*
I\le a(\ve)+b(\ve),\ J\le a(\ve)+b(\ve),\ D_L\le a(\ve)+b(\ve),
\ee*
with $I$, $J$, $D_L$ analogous to the quantities introduced in the previous section, and $a(\ve)$ and $b(\ve)$ satisfying \eqref{e05092}.

\medskip
\noindent{\bf Step 1.}  We have
\beq
\label{b1}
A= \frac{1}{\ve^n} \int\limits_{Q_\ve}\| f-f_{T,\ve} \|^p_{L^p} d T\to 0
\text{ as } \ve\to 0.
\eeq
Indeed, as in the proof of \eqref{a3} we find that 
\be*
A= \frac{1}{\ve^n}\int\limits_{Q_\ve}\|f(\cdot) - f(\cdot + X^n  -
X^j)\|^p_{L^p} dX^n. 
\ee*
Since $X^n \in Q_\ve \Rightarrow X^n-X^j\in Q_\ve$, the argument used in
the proof of \eqref{a3} yields \eqref{b1}.

\medskip
\noindent{\bf Step 2.}   Estimate of $I$\\
In our situation, $I$ is given by
\beq
\label{b2}
I= \frac{C}{\varepsilon^{n+sp}} \int\limits_{\mathbb R^n}dU \int\limits_{Q_\ve}  dX^n
|f(U) - f(U+X^n-X^j)|^p.
\eeq
It is convenient to split the integral $\d\int\limits_{Q_\ve} \ldots $ in \eqref{b2}  as
\be* \int\limits_{Q_\ve}\ldots=
\sum_{ q\in\{ -1, 1\}^{n-j}}\  \sum_{\sigma \in S_{n-j, n}
}\ \ \int\limits_{Q_{\ve,q,\sigma}}\ldots
\ee* 
We estimate, e.g., the
integral
$\overline I$ corresponding to $q_i=1$, $\sigma(i)= i,\, \forall\, i\in \llbracket 1,n-j\rrbracket$, the
other terms being similar.  
If we set $Z:=X^j-X^n$, then
\be*
\overline I = \frac{C}{\ve^{n+sp}} \int\limits_{\mathbb R^n} dU
\int\limits_{\shortstack{$\scriptstyle  | (X^n)_l|\leq  (X^n)_{n-j},\, \forall\, l>n-j$ \\ $\scriptstyle (X^n)_{n-j}\leq (X^n)_{n-j-1}\leq\cdots
\leq  (X^n)_1 \leq \ve$}} dX^n\, |f(U) - f(U-Z)|^p
\ee*
and
\be*
Z_l=\begin{cases} \ve - (X^n)_{l}, & \text{if }l\le n-j\\
\d\left(\frac{\ve}{ (X^n)_{n-j}}-1\right)  (X^n)_l, & \text{if }l>n-j
\end{cases}.
\ee*

The following properties are straightforward:
\beq
\label{b3}
0\leq Z_1 \leq \cdots \leq Z_{n-j} \leq \ve \ \text{ and }\ |Z_{n-j+1}|, \ldots,
| Z_n|\leq Z_{n-j},
\eeq
\be*
(X^n)_1= \ve - Z_1, \ldots, (X^n)_{n-j} = \ve - Z_{n-j}, 
\ee*
\be*
(X^n)_{n-j+1}   =
\frac{\ve -Z_{n-j}}{Z_{n-j}} Z_{n-j + 1},\ldots, (X^n)_n  =
\frac{\ve -Z_{n-j}}{Z_{n-j}} Z_n,
\ee*
\be*
\text{the Jacobian of the mapping }Z\mapsto X\text{ is given by }\left|\frac{dX^n}{dZ}\right| = \left( \frac{\ve -
Z_{n-j}}{Z_{n-j}}\right)^j.
\ee*
Thus
\beq
\label{b4}
\overline I \leq \frac{C}{\ve^{n+sp}} \int\limits_{\mathbb R^n} dU \int\limits^\ve_0 d
Z_{n-j}\int\limits_{|\widehat Z_{n-j}|\leq Z_{n-j}} d\widehat Z_{n-j}
\left(\frac{\ve-Z_{n-j}}{Z_{n-j}}\right)^j |f(U) - f(U-Z)|^p.
\eeq
Since for any $Z$ satisfying \eqref{b3} we have
\be*
\frac{1}{\ve^{n+sp}}\left(\frac{\ve- Z_{n-j}}{Z_{n-j}}\right)^j \leq
\frac{1}{\left(Z_{n-j}\right)^{n+sp}} = \frac{1}{|Z|^{n+sp}},
\ee*
\eqref{b4} implies  that
\be*
\overline I\leq C\int\limits_{\mathbb R^n} dU \int\limits_{|Z|\leq\ve}
dZ\, \frac{|f(U)-f(U-Z)|^p}{|Z|^{n+sp}} = C \iint\limits_{|U-W|<\ve}
dUdW\, \frac{|f(U)-f(W)|^p}{|U-W|^{n+sp}}\to 0 \text{ as } \ve \to 0.
\ee*
\medskip
\noindent{\bf Step 3.}  Estimate of $D$

With $D_L$ as in \eqref{abac},   we have
\be*
D \leq \sum_{\shortstack{$\scriptstyle L\in \mathbb Z^n$ \\ $\scriptstyle|L|\leq 1$}} D_L.
\ee*

\medskip
\noindent{\bf Step 3.1.}  Estimate of $D_0$\\
Recall that
\be*
\aligned
D_0=&
\frac{1}{\varepsilon^n}\int\limits_{\mathbb R^n} dU
\int\limits_{Q_\varepsilon} dX^n \int\limits_{Q_\varepsilon} dY^n\,
\frac{|f_U(U +X^n) - f_U(U+Y^n)|^p}{|X^n-Y^n|^{n+sp}}\\
=&
\frac{1}{\varepsilon^n}\int\limits_{\mathbb R^n} d U 
\int\limits_{Q_\varepsilon } dX^n\int\limits_{Q_\varepsilon}dY^n\,
\frac{|f(U+X^j) - f(U+Y^j)|^p}{|X^n-Y^n|^{n+sp}}.
\endaligned
\ee*
If we take the partition \eqref{part} of $Q_\ve$ into account, we find that
\beq
\label{equb}
D_0=
\sum_{ q\in\{ -1, 1\}^{n-j}}\ 
\sum_{\sigma \in S_{n-j, n}}\ 
\sum_{ r\in \{ -1, 1\}^{n-j}}\ 
\sum_{ \tau \in S_{n-j,n}}\
D_{0,q,\sigma, r,\tau},
\eeq
where
\beq
\label{equc}
D_{0,q,\sigma, r,\tau}= \frac{1}{\ve^n} \int\limits_{\mathbb R^n} dU \int\limits_{Q_{\ve,
q,\sigma}} dX^n\int\limits_{Q_{\ve, r,\tau}} dY^n\
\frac{|f(U+X^j) - f(U+Y^j)|^p}{|X^n-Y^n|^{n+sp}}.
\eeq
We next consider a convenient parametrization of $Q_{\ve, q,\sigma}$, given by
\beq
\label{defXn}
\begin{cases}
(X^n)_{\sigma(1)} &=\ve q_{1} t_{1},\  0 \leq
t_{1} \leq 1\\
(X^n)_{\sigma(2)} &=\ve q_{2} t_{1} t_{2},\  0 \leq
t_{2} \leq 1\\
\quad \vdots  & \\
(X^n)_{\sigma(n-j)} &=\ve q_{n-j}t_{1}
t_{2}\ldots t_{n-j},\  0\leq t_{n-j}\leq 1\\
(X^n)_l &= \ve  t_{1}t_2\ldots
t_{n-j}\omega_l,\ |\omega_l|\leq 1,\ \forall\, l\neq\sigma(1), \ldots, \sigma(n-j)
\end{cases}. 
\eeq
We note that
\beq
\label{defXj}
(X^j)_l=\begin{cases}\ve q_{\sigma^{-1}(l)},& \text{ if }l=\sigma (i)\text{ for some }i\\
\ve\omega_l, & \text{ else}
\end{cases}.
\eeq
In particular, $X^j$ depends only on the $\omega_l$'s, not on the $t_i$'s; this will be used to give a meaning to \eqref{equa} below.

We consider a similar parametrization of $C_{\ve, r, \tau},$ the $t$'s
being replaced by $u$'s and the $\omega$'s by $\lambda$'s.

We use the following compact notations: 
\be*
\omega=(\omega_l)_{l\not\in\sigma \left(\llbracket 1,n-j\rrbracket\right)}  \text{ and } t=(t_i)_{i\in\llbracket 1,n-j\rrbracket}
\ (\lambda, u \text{ are defined similarly}). 
\ee*
Note that $X^j$ depends only on $\omega$, $\sigma$ and $q$; similarly, $Y^j$ is expressed in terms of $\lambda$, $\tau$ and $r$.\\
With the convention that $0\le t\le 1$ stands for $0\le t_i\le 1$ for each $i$, we find that
\beq
\label{equa}
D_{0,q,\sigma, r,\tau}= \frac{1}{\ve^{sp}} \int\limits_{\mathbb R^n} dU
\int\limits_{|\omega|\leq 1} d\omega \int\limits_{|\lambda|\leq
1} d\lambda\  k(\omega, \lambda)\, |f(U+X^j) - f(U+Y^j)|^p,
\eeq
where
\be*
k(\omega, \lambda) = \int\limits_{0\leq t\leq 1}
dt\int\limits_{0\leq u\leq 1} du\
 t^{n-1}_{1} \ldots  t^j_{n-j} u^{n-1}_{1}
\ldots u^j_{n-j}\ \frac{\ve^{n+sp}}{|X^n-Y^n|^{n+sp}}.
\ee*

We rely on the following generalization of Lemma \ref{lema2}.
\begin{lemm}
\l{lemb2}
  Let $0<s< 1, sp < j + 1\leq n$.  Then 
\beq
\label{b7}
k(\omega, \lambda) \leq \frac{C\ve^{j+sp}}{|X^j-Y^j|^{j+sp}}.
\eeq
\end{lemm}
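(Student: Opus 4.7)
The plan is to iterate Lemma \ref{lema2} once for each pair $(t_i, u_i)$, $i=1,\dots,n-j$, so that the exponent of $|X^{n-i}-Y^{n-i}|$ in the denominator drops by one at each step, going from $n+sp$ down to $j+sp$ after $n-j$ iterations. The key observation enabling the iteration is the factorization
\[
X^n = t_1 X^{n-1},\qquad Y^n = u_1 Y^{n-1},
\]
where $X^{n-1},Y^{n-1}\in\p Q_\ve$ have sup-norm $\ve$ and are independent of $t_1$ and $u_1$ respectively; this is immediate from the parametrization \eqref{defXn} and the fact that the radial projection acts as $Z\mapsto \ve Z/|Z|_\infty$. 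Freezing the remaining variables and integrating in $(t_1,u_1)$ then reduces, after rescaling by $\ve$, to Lemma \ref{lema2} applied to $X^{n-1}/\ve, Y^{n-1}/\ve\in\p Q_1$, yielding
\[
\int_0^1\!\!\int_0^1 \frac{t_1^{n-1}u_1^{n-1}\,dt_1\,du_1}{|t_1 X^{n-1}-u_1 Y^{n-1}|^{n+sp}} \le \frac{C}{\ve\,|X^{n-1}-Y^{n-1}|^{n+sp-1}}.
\]
This step requires only $sp<n$, which is weaker than the standing hypothesis $sp<j+1\le n$.

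For each subsequent iteration I would use the analogous factorization through the center $C_i = \sum_{k=1}^i \ve q_k e_{\sigma(k)}$ of the current sub-face of $\p Q_\ve$: a direct computation from \eqref{defXn} yields $X^{n-i}-C_i = t_{i+1}(X^{n-i-1}-C_i)$, and similarly for $Y$. Applying the $(n-i)$-dimensional analogue of inequality \eqref{gene} in the subspace $\{e_{\sigma(1)},\dots,e_{\sigma(i)}\}^\perp$ then integrates out $(t_{i+1},u_{i+1})$, decreasing the exponent by one and producing another factor $1/\ve$. The applicability condition at step $i$ is $sp<n-i+1$, whose tightest instance, at $i=n-j$, is precisely $sp<j+1$. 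After $n-j$ iterations, the accumulated $\ve$-factor $\ve^{n+sp}\cdot\ve^{-(n-j)} = \ve^{j+sp}$ together with the final denominator $|X^j-Y^j|^{j+sp}$ produces the claimed bound.

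The main obstacle is the interplay between two regimes at each iteration: the \enquote{same sub-face} regime, in which the centers $C_i$ for $X$ and $Y$ coincide and cancel cleanly so the next Lemma \ref{lema2}-type estimate applies verbatim, and the \enquote{different sub-face} regime, in which the centers do not cancel and one must instead exploit the fact that in that situation both $|X^{n-i}-Y^{n-i}|$ and $|X^j-Y^j|$ are comparable to $\ve$ (up to the right powers). Handling this case analysis uniformly across all $n-j$ iterations, so that the two regimes can always be combined into a single bound of the stated form, is the principal technical subtlety, but it requires no analytic ingredient beyond Lemma \ref{lema2} and \eqref{gene}.
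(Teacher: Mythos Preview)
Your iteration strategy is exactly the paper's: write $X^n=t_1X^{n-1}$, $Y^n=u_1Y^{n-1}$, apply Lemma~\ref{lema2} to integrate out $(t_1,u_1)$, and repeat. The \enquote{same sub-face} regime (the paper's Case~1: $\sigma(1)=\tau(1)$, $q_1=r_1$) works as you describe, and the \enquote{opposite sub-face} regime ($\sigma(1)=\tau(1)$, $q_1\neq r_1$) is indeed trivial since then $|X^{n-1}-Y^{n-1}|=|X^j-Y^j|=2\ve$.

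The gap is your treatment of the remaining \enquote{different sub-face} case $\sigma(1)\neq\tau(1)$. Here $X^{n-1}$ and $Y^{n-1}$ lie on \emph{adjacent} faces of $\partial Q_\ve$ sharing an $(n-2)$-dimensional edge, and both can approach that edge simultaneously: neither $|X^{n-1}-Y^{n-1}|$ nor $|X^j-Y^j|$ is bounded below by $c\ve$, so your claim that they are \enquote{comparable to $\ve$} is false. Moreover, with $X^{n-1}=e_{\sigma(1)}\ve+t_2v$ and $Y^{n-1}=e_{\tau(1)}\ve+u_2w$ the centers $e_{\sigma(1)}\ve\neq e_{\tau(1)}\ve$ do not cancel, so the expression is not of the form $|t_2\tilde\omega-u_2\tilde\sigma|$ and neither Lemma~\ref{lema2} nor \eqref{gene} applies directly to integrate out $(t_2,u_2)$. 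The paper closes this gap with a separate geometric ingredient (Lemma~\ref{lemb3}): a Lipschitz radial projection argument giving
\[
|(e_1+t_2v)-(e_2+u_2w)|\ge C\,|(e_1+t_2v)-(e_2+w)|,
\]
which lets one replace $u_2w$ by $w$, integrate $u_2$ trivially, and then apply \eqref{gene} in the single variable $t_2$. This step is essential and is not covered by \enquote{no analytic ingredient beyond Lemma~\ref{lema2} and \eqref{gene}}.
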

The case $j=
n-1$ corresponds to Lemma \ref{lema2}.
 \begin{proof} 
We note that inequality \eqref{b7} makes sense, since $X^j$ (respectively
$Y^j$) depends only on $\omega$ (respectively $\lambda$).\\
On the other hand, the formula that gives $k(\omega,\lambda)$ does not depend on $\ve$; neither does the r.h.s. of \eqref{b7}. Thus, when we estimate $k(\omega,\lambda)$, we may assume that $\ve=1$. \\ We proceed by induction on $n$: assuming that \eqref{b7} holds for all integers $m\le n-1$ and all $j\le m-1$, we prove it for $n$ and each $j\le n-1$.  Note that the case $n=1$ (and $j=0$) is covered by Lemma \ref{lema2}.\\
Since $X^n = t_{1} X^{n-1}$ and $Y^n =
u_{1} Y^{n-1}$, we have 
\be*
k(\omega, \sigma) = \int\limits_{0\leq  t\leq 1} d t
\int\limits_{0\leq  u\leq 1} d u\  t_{1}^{n-1}\ldots
 t^j_{n-j}
u^{n-1}_{1} \ldots u^j_{n-j} \
\frac{1}{|t_{1}X^{n-1}- u_{1} Y^{n-1}|^{n+sp}}.
\ee*
Using the fact that $|X^{n-1} | = |Y ^{n-1}|=1$ and Lemma \ref{lema2}, we find that
\beq
\label{new}
\int\limits^1_0 t_{1}^{n-1} dt_{1} \int\limits^1_0
u^{n-1}_{1}du_{1}\ \frac{1}{|t_{1}X^{n-1}-
u_{1} Y^{n-1}|^{n+sp}}\leq \frac{C}{|X^{n-1}-
 Y^{n-1}|^{n+sp-1}}.
\eeq
If $j=n-1$, then \eqref{new} is the desired inequality. Assume $j<n-1$. Then \eqref{new} implies that
\beq
\label{b8}
k(\omega,\sigma)\leq C\int\limits_{0\leq \widehat t_1\leq 1} d\widehat t_1
\int\limits_{0\leq \widehat u_1\leq 1} d\widehat u_1\  t_{2}^{n-2}\ldots
 t^j_{n-j}
u^{n-2}_{2} \ldots u^j_{n-j} \
\frac{1}{|X^{n-1}- Y^{n-1}|^{n+sp-1}}.
\eeq

Next we note that one of the three cases occurs.

\medskip
\noin{\bf Case 1.}  $\sigma (1)= \tau (1), q_{1} = r_{1}$.
\\
{\bf Case 2.}  $\sigma (1)= \tau (1), q_{1} \neq r_{1}$.
\\
{\bf Case 3.}  $\sigma (1)\neq \tau (1)$.

We will estimate the right-hand side of \eqref{b8} in each of these cases.

\medskip
\noin{\bf Case 1.}  Assume e.g. $\sigma(1) = \tau(1) = 1, q_1 =
r_1 = 1$.  In this case, the first coordinate of $X^{n-1}$ or $Y^{n-1}$ is $1$, so that
\be*
|X^{n-1} - Y^{n-1}| = |\widehat{X^{n-1}}_1
- \widehat{Y^{n-1}}_1 |.
\ee*
The vectors $\widehat{X^{n-1}}_1$ and
$\widehat{Y^{n-1}}_1$ belong to $\mathbb R^{n-1}$ and are obtained from $\omega$ and $\lambda$ via \eqref{defXn}, with an obvious shift in the indices of the coordinates and with $n$ replaced by $n-1$.\\
Thus, in this case, \eqref{b7} follows from \eqref{b8} and
the fact that the conclusion of the lemma holds for $n-1$ and $j$.

\medskip
\noin{\bf Case 2.} In this case, we have $|X^{n-1}-
 Y^{n-1}|= 2$ and $|X^{j}-
 Y^{j}|= 2$.  Inequality \eqref{b7} follows easily from
\eqref{b8}.

\medskip
\noin{\bf Case 3.} With no loss of generality, we may assume
$\sigma(1)=1, \tau(1) = 2, q_1 = 1, r_1 = 1$.  Thus 
\beq
\label{bb1}
X^{n-1}=e_1 + t_{2} v,\  v\bot e_1,\ | v|=1
\eeq
and 
\beq
\label{bb2}
\hskip 4mm Y^{n-1}=e_2 + u_{2} w,\  w\bot e_2,\ | w|=1.
\eeq
We rely on the fact that \eqref{bb1}-\eqref{bb2} imply
\beq
\label{b9}
|(e_1+t_2v) - (e_2+ u_2w) | \geq C |(e_1+t_2v) - (e_2+w)|,\  0\le t_2, u_2\le 1.
\eeq
The  proof of this inequality is postponed (see Lemma \ref{lemb3} below).\\
Using \eqref{b9}, we obtain
\beq
\label{b10}
\aligned
M:&=\int\limits^1_0 t^{n-2}_{2} dt_{2} \int\limits^1_0
u^{n-2}_{2} du_{2}\ 
\frac{1}{|X^{n-1}-
 Y^{n-1}|^{n+sp-1}}\\
&\leq C\int\limits^1_0 t^{n-2}_{2} dt_{2} \int\limits^1_0
u^{n-2}_{2} du_{2}\ \frac{1}{|(e_{1} + t_{2}v)- (e_{2} + w)|^{n+sp-1}}\\
&\leq C\int\limits^1_0 t^{n-2}_{2} dt_{2}\ \frac{1}{|(e_{1} + t_{2}v)- (e_{2} + w)|^{n+sp-1}}\leq\frac{C}{|(e_{1} + v)- (e_{2} + w)|^{n+sp-2}}.\endaligned
\eeq
The  last inequality in \eqref{b10} is a consequence of \eqref{gene}.\footnote{We are in position to apply \eqref{gene} since $sp<n-1$.} 

Since $e_{1} + v =X^{n-2}$ and $e_2+w=Y^{n-2}$, we find that, with $\overline t=(t_3,\ldots, t_{n-j})$ and $\overline u=(u_3,\ldots, u_{n-j})$, we have
\be*
k(\omega, \sigma) \leq
 C\int\limits_{0\le \overline t\le 1}\, dt\ \prod_{i=3}^{n-j}t_i^{n-i} \int\limits_{0\le \overline u\le 1}\, du\ \prod_{l=3}^{n-j}u_l^{n-l}\ \frac{1}{|X^{n-2} - Y^{n-2}|^{n+sp-2}}.
\ee*
If $j=n-2$, then we are done. Otherwise, we continue as in the estimate of \eqref{b8}, distinguishing at each step the three cases
mentioned before (and using again the induction assumption when
encountering Case 1).  At the end of this process, we are led to 
\be*
k(\omega, \lambda) \leq \frac{C}{|X^j-Y^j|^{j+sp}},\  \forall\, j\in \llbracket 0, n-1\rrbracket,
\ee*
assuming the same inequality valid up to $n-1$. 

The proof of Lemma \ref{lemb2} is complete.
\end{proof}

As promised, we now established \eqref{b9}. 
\begin{lemm} 
\l{lemb3} 
If $0\leq t, u\leq 1$ are real numbers, and if $v \bot e_1$, $|v| = 1$, $w\bot
e_2$, $|w| = 1$, then
\beq
\label{b14}
|(e_1+tv) - (e_2+ uw) | \geq C |(e_1+tv) - (e_2+w)|.
\eeq
\end{lemm}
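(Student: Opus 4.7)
The plan is to adapt the Lipschitz-retraction strategy already used in the proof of Lemma \ref{lema7}. The key preliminary observation is that $v\perp e_1$ means $v_1=0$, hence the first coordinate of $(e_1+tv)-e_2$ equals $1$. In particular $|(e_1+tv)-e_2|\ge 1$ for every admissible $t,v$, so $e_1+tv$ lies outside the open cube $Q_1^\circ(e_2)$. Symmetrically, since $w\perp e_2$ and $|w|=1$, the point $e_2+uw$ lies in $\overline Q_1(e_2)$ for all $0\le u\le 1$, and on its boundary when $u=1$.

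I would split the argument in two cases, exactly as in the proof of \eqref{a28}.

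\emph{Case A: $u\ge 1/2$.} I introduce the map
\be*
\varphi:\bigl(\overline Q_1(e_2)\setminus Q_{1/2}^\circ(e_2)\bigr)\cup\bigl(\mathbb R^n\setminus Q_1^\circ(e_2)\bigr)\longrightarrow\mathbb R^n,
\ee*
defined by $\varphi(Y)=Y$ if $Y\notin Q_1^\circ(e_2)$ and $\varphi(Y)=e_2+(Y-e_2)/|Y-e_2|$ otherwise. The two formulas agree on $\partial Q_1(e_2)$, and the radial projection centered at $e_2$ is Lipschitz on the annular region $\overline Q_1(e_2)\setminus Q_{1/2}^\circ(e_2)$ (this is exactly the point checked in the proof of Lemma \ref{lema7}), so $\varphi$ is globally Lipschitz on its domain. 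By the preliminary observation $\varphi(e_1+tv)=e_1+tv$, and since $|e_2+uw-e_2|=u\in[1/2,1]$ and $|w|=1$, we have $\varphi(e_2+uw)=e_2+w$. Applying the Lipschitz estimate to these two points yields \eqref{b14} with $C=1/\mathrm{Lip}(\varphi)$.

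\emph{Case B: $0\le u<1/2$.} Here I argue directly by coordinates. Using $v_1=0$, the first component of $(e_1+tv)-(e_2+uw)$ is $1-uw_1$; since $|w_1|\le|w|=1$ and $u<1/2$, its absolute value is at least $1/2$, so the left-hand side of \eqref{b14} satisfies $|(e_1+tv)-(e_2+uw)|\ge 1/2$. A coordinate-wise bound of the right-hand side is $|(e_1+tv)-(e_2+w)|\le 2$. Therefore \eqref{b14} holds trivially (e.g.\ with $C=1/4$) in this case.

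I do not anticipate any real obstacle: the only subtle point is the Lipschitz character of the retraction $\varphi$, and that is established verbatim as in Lemma \ref{lema7}; the rest is a direct coordinate estimate exploiting the orthogonality relations $v\perp e_1$, $w\perp e_2$.
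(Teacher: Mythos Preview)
Your proof is correct and follows essentially the same strategy as the paper: the two-case split on $u$ and the coordinate argument in Case~B are identical to the paper's, and in Case~A both you and the paper use a Lipschitz radial retraction centered at $e_2$ that fixes $e_1+tv$ and sends $e_2+uw$ to $e_2+w$. The only cosmetic difference is that the paper retracts onto a subset $\mathscr K\subset\partial Q_1$ while you retract onto $\partial Q_1(e_2)$ (exactly as in Lemma~\ref{lema7}); both choices work for the same reason.
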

\begin{proof}[Proof of Lemma \ref{lemb3}]  Assume first that $\d u\leq\frac{1}{2}$.  Then
\be*
|e_1 + tv - (e_2 + uw)|\geq |\langle e_1 + tv - (e_2 + uw), e_1\rangle |= |1-uw_1|\geq \frac{1}{2}
\ee*
and $|e_1 + tv
- (e_2 + w)| \leq 4$, 
so that \eqref{b14} is clear in this case.
\medskip

We next consider the case $\d u\geq\frac{1}{2}$.  Consider the following  compact subset of $\partial Q_1$:
\be*
\mathscr K = \{X\in
\mathbb R^n;\, |X|=1\}\setminus \{ X\in \mathbb R^n;\,  X_2 = 1, |\widehat{X}_2| <
1\} .
\ee*
Let $P$ be the radial
projection centered at $e_2$ of $Q_1 \setminus\{ e_2\}$ onto $\mathscr K$.\footnote{$P$ is given by the formula $\d P(X)=e_2+\tau (X-e_2)$, where $\tau$ is the only number $\ge 1$ such that $|e_2+\tau (X-e_2)|=1$.}  Then
\begin{gather}
\label{bb15}
P \text{ is Lipschitz in } Q_1\setminus Q_{1/2}(e_2),
\\
\l{bebe17}
P (e_2 + uw) = e_2+w \text{ and }
P (e_1 + t v) = e_1 +tv,
\\
\label{bb16}
e_1 + tv, e_2 + uw \in Q_1\setminus Q_{1/2}(e_2) 
\text{ if } u \geq 1/2.
\end{gather}
Inequality \eqref{b14}, which is equivalent to
\be*
|P(e_1 +tv)-P (e_2+uw)|\leq \frac{1}{C}|(e_1+tv) - (e_2 +
uw)|,
\ee*
is then a consequence of \eqref{bb15} - \eqref{bb16}.

The proof of Lemma \ref{lemb3} is complete.
\end{proof}
\medskip

\noin{\bf Step 3.1 continued.}  Recall that we want to establish an estimate of the form $D_0\le a(\ve)+b(\ve)$.

For this purpose, we start by establishing \eqref{abae}, which is the analog of \eqref{a21} adapted to the case of a general $j$.

By Lemma \ref{lemb2} and \eqref{equa}, we find that
\beq
\label{b18}
D_{0,q,\sigma, r,\tau}
\leq C\ve^j \int\limits_{\mathbb R^n} dU
\int\limits_{|\omega|\leq 1} d\omega \int\limits_{|\lambda|\leq
1} d\lambda\ \frac{|f(U+X^j) - f(U+Y^j)|^p}{|X^j-Y^j|^{j+sp}}:=\overline D_{0,q,\sigma, r,\tau}:=\overline D_0.
\eeq
Estimate \eqref{b18} leads to the following: 
\beq
\label{b19}
\aligned
\overline D_0&\le \frac C{\ve^{sp}}\int\limits_{\mathbb R^n} dU
\int\limits_{\shortstack{$\scriptstyle |\omega_l|\leq 2,\, \forall\, l \in
(\sigma,\tau)_1$ \\  $\scriptstyle\omega_l = q_{\sigma^{-1}(l)} - r_{\tau^{-1}(l)},\, \forall\, l\in (\sigma, \tau)_2$}}
\bigotimes_{l \in (\sigma, \tau)_1}d\omega_l\   \frac{|f(U+\ve\omega) - f(U)|^p}{|\omega|^{j+sp}}\\
&=C\ve^{n(\sigma,\tau)}\int\limits_{\mathbb R^n} dU\int\limits_{\shortstack{$\scriptstyle |\omega_l|\leq 2\ve,\, \forall\, l \in
(\sigma,\tau)_1$ \\ $\scriptstyle\omega_l = \ve(q_{\sigma^{-1}(l)} - r_{\tau^{-1}(l)}),\, \forall\,  l\in (\sigma, \tau)_2$}}
\bigotimes_{l \in (\sigma, \tau)_1}d\omega_l\   \frac{|f(U+\omega) - f(U)|^p}{|\omega|^{j+sp}}.
\endaligned
\eeq

Here,
\be*
(\sigma,\tau)_2 = \sigma(\{1, \ldots, n-j\}) \cap \tau(\{1, \ldots,
n-j\})\subset\{1, \ldots, n\};
\ee*
\be*
(\sigma,\tau)_1 = \{1, \ldots,  n\}\setminus (\sigma,
\tau)_2;
\ee*
\be*
n(\sigma,\tau) = j-n + \#(\sigma, \tau)_2.
\ee*

Indeed, inequality \eqref{b19} is easily proved by noting that 
\be*
(X^j-Y^j)_l = \begin{cases}
\ve(q_i - r_m),& \text{if } l=\sigma(i)=\tau(m)\in (\sigma, \tau)_2\\
\ve (q_i - \lambda_m),& \text{if } l=\sigma(i)\in\sigma(\{1,\ldots, n-j\})\setminus
(\sigma, \tau)_2\\
\ve(\omega_l- r_m),& \text{if } l=\tau(m)\in\tau(\{1,\ldots,
n-j\})\setminus(\sigma, \tau)_2\\
\ve(\omega_l - \lambda_l),& \text{if } l\notin \sigma(\{1,\ldots, n-j\})\cup
\tau(\{1,\ldots, n-j \})\end{cases}.
\ee*

For further use, let us prove that
\beq
\label{abaf}
n-2j\le \#(\sigma, \tau)_2\le n-j.
\eeq

 To see this, we note that on the one hand we have
\be*
\d \# [\sigma(\{1,\ldots, n-j\})\cup
\tau(\{1,\ldots, n-j \})]=2n-2j-\#(\sigma, \tau)_2\le n.
\ee*

On the other hand, clearly $\#(\sigma, \tau)_2\le n-j$.

If we insert \eqref{b19} into \eqref{b18} and next take the sum over $q,\sigma,
r, \tau$ and use \eqref{abaf}, we obtain the following analog of \eqref{a21}:
\beq
\label{abae}
\aligned
D_0 &\leq C \sum^{n-j}_{ k= \min (0, n-2j)}  \ \sum_{ \shortstack{$\scriptstyle A\subset\{1,
\ldots, n\}$ \\ $\scriptstyle \# A= k$}} \frac{1}{\ve^{n-j-k}}  \int\limits_{\mathbb R^n}dU
\int\limits_{\shortstack{$\scriptstyle\omega_l
\in\{0, \pm 2\ve\},\, \forall\, l\in A$ \\ $\scriptstyle |\omega_l|\leq 2\ve, \, \forall\, l\notin
A$}}\bigotimes_{l\notin A}d\omega_l\ \frac{|f(U+\omega) - f(U)|^p}{|\omega|^{j+sp}}
\\
&
:= C \sum_{ k, A}  D_{0,k, A}.
\endaligned
\eeq
We complete the proof of Step 3.1 by estimating each $D_{0,k,A}$.  By
symmetry, it suffices to estimate the  integrals
\be*
I_{l, m} = \frac{1}{\ve^{n-j-(l +m)}}\int\limits_{\mathbb
R^n} dU \int\limits_{\shortstack{$\scriptstyle\omega_1 = \cdots
= \omega_l = 2\ve$ \\  $\scriptstyle\omega_{l+1} = \cdots = \omega_{l+m} = 0$ \\ $\scriptstyle |\omega_k|\leq 2\ve,\, \forall\, k>l +m$}}d\omega_{l+m+1}\ldots d\omega_n\, \frac{|f(U+\omega) - f(U)|^p}{|\omega|^{j+sp}},
\ee*
with $\max\{0,n-2j\}\leq l + m \leq n-j$.

\medskip
\noin{\bf Case 1.}  $l= 0, m> 0$\\
 In this case, we have 
\beq
\label{ud1}
I_{0,m}=\frac{1}{\ve^{n-j-m}}\int\limits_{\mathbb
R^n} dU \int\limits_{\shortstack{$\scriptstyle\omega_1 = \cdots
= \omega_m =0$ \\ $\scriptstyle |\omega_k|\leq 2\ve,\,\forall\,  k>m$}}d\omega_{m+1}\ldots d\omega_n\, \frac{|f(U+\omega) - f(U)|^p}{|\omega|^{j+sp}}.
\eeq

\medskip
\noin{\bf Case 1.1.}  $m=n-j$

\medskip
\noindent
By Lemma \ref{lema6}, we have
\be*
\aligned
I_{0,n-j} &= \int\limits_{\mathbb R^n} dU \int\limits_{\shortstack{$\scriptstyle\omega'\in \mathbb R^j$ \\ $\scriptstyle |\omega'|\leq 2 \ve$}}
d\omega'  \ \frac{ |f(U + (0,\ldots
0, \omega')) - f(U)|^p}{|\omega'|^{j+sp}}
\leq C\int\limits_{\mathbb R^n} dU\int\limits_{\shortstack{$\scriptstyle\omega\in\mathbb R^n$ \\ $\scriptstyle |\omega|\leq 4\ve$}} d\omega\ \frac{|f(U+\omega) -f(U)|^p}{|\omega|^{n+sp}}\\
&=C \iint\limits_{|U-V|\le 4\ve} dU\, dV\,  \frac{|f(U) -
f(V)|^p}{|U-V|^{n+sp}} \to 0 \text{ as } \ve\to 0.
\endaligned
\ee*

\medskip

\noin{\bf Case 1.2.}  $m<n-j$

\medskip
\noindent
Using again Lemma \ref{lema6}, we find 
\be*
\aligned
\int\limits^1_0\frac{I_{0,m}}{\ve} d\ve &=\int\limits^1_0 \frac{d\ve}{\ve^{n+1-j-m}} \int\limits_{\mathbb R^n} dU\int\limits_{\shortstack{$\scriptstyle\omega'\in \mathbb R^{n-m}$ \\ $\scriptstyle |\omega'|\leq 2\ve$}} d\omega'\
\frac{|f(U+(0,\ldots , 0,
\omega'))-f(U)|^p}{|\omega'|^{j+sp}}\\
&\leq C\int\limits_{\mathbb R^n} dU\int\limits_{\shortstack{ $\scriptstyle\omega' \in \mathbb R^{n-m}$ \\ $ \scriptstyle|\omega'|\leq 2$}}
d\omega'\ \frac{ |f(U +
(0, \ldots, 0, \omega') )- f(U)|^p}{|\omega'|^{n-m+sp}}\\
&\leq
C\int\limits_{\mathbb R^n} dU\int\limits_{\shortstack{$\scriptstyle\omega\in \mathbb R^n$ \\ $\scriptstyle |\omega'|\leq 4$}} d\omega\
 \frac{|f(U+\omega) -
f(U)|^p}{|\omega|^{n+sp}}= C \iint\limits_{|U-V|\le 4} dU dV\,
\frac{|f(U)-f(V)|^p}{|U-V|^{n+sp}} < \infty.
\endaligned
\ee*

\medskip
\noin{\bf Case 2.}  $l > 0$\\
In this case we have $|\omega| = 2\ve$.  We set $V=U +2\ve e_1 + \cdots
+2\ve e_l$ and $\omega'=\d (\omega_k)_{k\in\llbracket l+m+1,n\rrbracket}$. Since
\be*
\aligned
|f(U+\omega) - f(U)|^p \leq C&(|f(U+ 2\ve e_1) - f(U)|^p + |f(U+ 2\ve
e_1 + 2 \ve e_2) - f(U + 2\ve e_1)|^p \\
&+ \cdots + |f(V) - f(U+2\ve e_1 +\cdots + 2\ve e_{l-1})|^p
\\&
+|f(V+(0,
\ldots, 0, \omega'))- f(V)|^p),
\endaligned
\ee*
we find that
\be*
\aligned
I_{l, m}\leq &\frac{C}{\ve^{sp}} \sum^l_{j=1}\  \int\limits_{\mathbb
R^n} dU\  |f(U+ 2\ve e_j) - f(U)|^p
\\
&+\frac{C}{\ve^{n-(l+m) + sp}}\int\limits_{\mathbb R^n} dU\int\limits_{\shortstack{ $\scriptstyle\omega'\in\mathbb
R^{n-(l+m)}$ \\ $\scriptstyle|\omega'| \leq 2\ve$}} d\omega'\  |f(U+(0, \ldots, 0 , \omega'))-f(U)|^p
:=
C\left(\sum^l_{j=1} P_j + P_0\right).
\endaligned
\ee*
{\bf Estimate of $P_0$.} 
We have
\be*
\aligned
\int\limits^1_0 \frac{P_0}{\ve} d\ve &= \int\limits^1_0
\frac{d\ve}{\ve^{n+1-(l + m) + sp}} \int\limits_{\mathbb R^n} dU
 \int\limits_{ \shortstack{$\scriptstyle\omega' \in \mathbb
R^{n-(l +m)}$ \\ $\scriptstyle |\omega'|\leq 2 \ve$}}d\omega'\ |f(U + (0, \ldots, 0, \omega')) - f(U)|^p
\\
&\leq C \int\limits_{\mathbb R^n} dU\int\limits_{\shortstack{ $\scriptstyle\omega' \in \mathbb R^{n-(l+m)}$ \\ $\scriptstyle |\omega'| \leq
2$}} d\omega'\ \frac{|f(U+(0,\ldots,0,
\omega')) - f(U)|^p}{|\omega'|^{n-(l+m)+ sp}}
\\
\
&\leq C\int\limits_{\mathbb R^n} dU\int\limits_{\shortstack{ $\scriptstyle\omega\in \mathbb R^n $ \\ $ \scriptstyle|\omega| \leq
4$}} d\omega\  \frac{|f(U+\omega) -
f(U)|^p}{|\omega|^{n+ sp}}= C\iint\limits_{|U-V|\le 4}dUdV\ \frac{|f(U) -
f(V)|^p}{|U-V|^{n+ sp}}<\infty;
\endaligned
\ee*
here, we used Lemma \ref{lema6}.

\medskip
\noin{\bf Estimate of $P_1$.} (The estimates of $P_2, \ldots, P_l$
are similar.) By Lemma \ref{lema6}, 
we have 
\be*
\aligned
\int\limits^1_0 \frac{P_1}{\ve} d\ve &
= \int\limits^1_0
\frac{d\ve}{\ve^{1+sp}} \int\limits_{\mathbb R^n} dU\, |f(U+2\ve e_1) -
f(U)|^p\\
&
\le C\int\limits_{\mathbb R^n} dU\int\limits_{\shortstack{ $\scriptstyle\omega \in \mathbb R^n$ \\ $\scriptstyle |\omega|\leq 4$}}
d\omega\
\frac{|f(U+\omega) - f(U)|^p}{|\omega|^{n+sp}} = C\iint\limits_{|U-V|\le 4}
dUdV\, \frac{|f(U) - f(V)|^p}{|U-V|^{n+sp}} < \infty.
\endaligned
\ee*

\medskip
\noin{\bf Case 3.}  $l = m=0$\\
In this case, the inequality 
\be*
\ve^{n-j}|\omega|^{j+sp}\ge 2^{j-n}
|\omega|^{n+sp} \text{ if }|\omega|\le 2\ve
\ee*
yields
\be*
\aligned
I_{0,0} &= \frac{1}{\ve^{n-j}}\int\limits_{\mathbb R^n} dU \int\limits_{|\omega|\leq 2\ve} d\omega\
 \frac{|f(U+\omega) - f(U)|^p}{|\omega|^{j+sp}}\leq C\int\limits_{\mathbb R^n} dU\int\limits_{|\omega|\leq 2\ve} d\omega\
 \frac{|f(U+\omega) -
f(U)|^p}{|\omega|^{n+sp}}\\
&= C\iint\limits_{|U-V|\le 2\ve} dUdV\, \frac{|f(U) -
f(V)|^p}{|U-V|^{n+sp}} \to 0\ \text{ as } \ve \to 0.
\endaligned
\ee*
Step 3.1 is complete.

\medskip
\noin{\bf Step 3.2.}  Estimate of $D_L, L \in \mathbb Z^n, |L|=1$

\medskip
\noindent
The proof is essentially the same as for $D_0$.  One has to use
instead of Lemma \ref{lemb2} the following
\begin{lemm}
\l{lemb4}  Assume that $sp<j+1$. Let $L\in \mathbb Z^n$, with $|L| =1$.
Set
\be*
k(\omega, \lambda) = \int\limits_{0\leq t \leq 1}
dt \int\limits_{0\leq u \leq 1}
du\ t^{n-1}_{1} \ldots t^j_{n-j}
u^{n-1}_{1} \ldots u^j_{n-j}\ \frac{\ve^{n+sp}}{|X^n-(2\ve L +
Y^n)|^{n+sp}}.
\ee*
Then
\be*
k(\omega,\lambda) \leq \frac{C\ve^{j+sp}}{|X^j - (2\ve L + Y^j)|^{j+sp}}.
\ee*
\end{lemm}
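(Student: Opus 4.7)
My plan is to imitate the inductive proof of Lemma \ref{lemb2}, substituting Lemma \ref{lema7} for Lemma \ref{lema2} at the innermost integration step in order to absorb the translation by $2\ve L$.

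After scaling out $\ve$ (set $\ve = 1$), the goal becomes
\[
k(\omega, \lambda) \leq \frac{C}{|X^j - 2L - Y^j|^{j+sp}}.
\]
Writing $X^n = t_1 X^{n-1}$ and $Y^n = u_1 Y^{n-1}$ with $|X^{n-1}| = |Y^{n-1}| = 1$, and integrating out $(t_1, u_1)$ first, Lemma \ref{lema7} gives
\[
\int_0^1\!\!\int_0^1 t_1^{n-1} u_1^{n-1}\,\frac{dt_1\,du_1}{|t_1 X^{n-1} - 2L - u_1 Y^{n-1}|^{n+sp}} \leq \frac{C}{|X^{n-1} - 2L - Y^{n-1}|^{n-1+sp}}.
\]
When $j = n-1$ this already matches the desired bound. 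For $j<n-1$ I iterate, at each step distinguishing the three subcases from the proof of Lemma \ref{lemb2}: (i) when $\sigma(i) = \tau(i)$ and $q_i = r_i$, the shared coordinate cancels in $X^{n-1} - Y^{n-1}$, yielding an $(n-1)$-dimensional configuration to which the induction hypothesis applies, with a possibly projected shift vector; (ii) when $\sigma(i) = \tau(i)$ and $q_i \neq r_i$, the contribution of the matching coordinate makes $|X^{n-1} - 2L - Y^{n-1}|$ comparable to $|X^j - 2L - Y^j|$, and the estimate closes directly; (iii) when $\sigma(i) \neq \tau(i)$, a Lipschitz projection argument analogous to Lemma \ref{lemb3}, together with \eqref{gene}, eliminates one of the radial variables and lowers the ambient dimension by one.

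The main obstacle is the bookkeeping forced by the shift $2L$: whenever the canceled coordinate in case (i) happens to coincide with the nonzero coordinate of $L$, the residual shift in the induction hypothesis vanishes and I must still recover the correct factor $|X^j - 2L - Y^j|$ on the right-hand side. This is handled by an adaptation of the Lipschitz-projection trick used in the proof of Lemma \ref{lema7}, applied on the lower-dimensional face on which the induction is carried out. In all other configurations, either $|L|=1$ forces $|X^j - 2L - Y^j|$ to be bounded below by a positive constant (so the desired bound follows from a trivial majorization), or the shift is transverse to the critical coordinate and the proof reduces after one integration to the corresponding step of Lemma \ref{lemb2}. Combining these cases yields the stated bound $k(\omega,\lambda) \leq C\ve^{j+sp}/|X^j - 2\ve L - Y^j|^{j+sp}$.
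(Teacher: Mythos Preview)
Your approach is correct and essentially the same as the paper's: the paper also treats Lemma \ref{lemb4} as a ``straightforward variant'' of Lemma \ref{lemb2}, singling out as the only new ingredient the shifted version of \eqref{b9}, namely $|(e_1+t_2v)-(e_2+u_2w+2L)|\ge C|(e_1+t_2v)-(e_2+w+2L)|$, obtained by adapting Lemma \ref{lemb3}---exactly your case (iii) Lipschitz-projection step. Your ``main obstacle'' paragraph somewhat overstates the difficulty: whenever the coordinate being cancelled in case (i) carries a nonzero component of $L$, both $|X^{n-1}-2L-Y^{n-1}|$ and $|X^j-2L-Y^j|$ are bounded below by $2$, so that sub-case is trivial rather than requiring a further projection trick.
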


Lemma \ref{lemb4}, which generalizes Lemma \ref{lema7}, is a rather  straightforward variant of  Lemma Lemma \ref{lemb2}. Its proof requires the following variant of \eqref{b9}:
\be*
|(e_1+t_2v)-(e_2+u_2w+2L)|\ge C|(e_1+t_2v)-(e_2+w+2L)|, 
\ee*
which is obtained by adapting the argument in the proof of  Lemma \ref{lemb3}.

Using Lemma \ref{lemb4}, we estimate $D_L$ as in Step 3.2 in Section \ref{appa}; the details  are left
to the reader.  The proof of Lemma \ref{lemb1} is complete.
\end{proof}

\begin{rema}
\label{uc5}
By Steps 3.1 and 3.2, we have an estimate of the form
 \be*
 \sum_{\shortstack{$\scriptstyle L\in\Z^n$ \\ $\scriptstyle |L|\le 1$}}\overline D_L\le a(\ve)+b(\ve).
 \ee*
 
 Here, $\overline D_0$ is as in \eqref{b18}, and the quantities $\overline D_L$ are defined similarly (this is implicit in Step 3.2). The numbers $a(\ve)$ and $b(\ve)$ satisfy  \eqref{e05092}. If we adapt the averaged estimates leading to the existence of $b(\ve)$ (more specifically, to the estimates of $P_0$, of $P_1$, and of $I_{0,m}$ with $m<n-j$), we see that, for a {\it fixed} $\ve$, there exists some $C(\ve)$ such that
 \beq
 \label{yd1}
 \sum_{\shortstack{$\scriptstyle L\in\Z^n$ \\ $\scriptstyle |L|\le 1$}}\overline D_L\le C(\ve)|f|_{W^{s,p}(\R^n)}^p.
 \eeq
  
 In order to justify the above, we  examine e.g. the case of $I_{0,m}$ and of $P_1$, the other cases being similar. 
 
 By \eqref{ud1} and Lemma \ref{lema6}, we have
\be*
I_{0,m}=\frac{1}{\ve^{n-j-m}}\int\limits_{\mathbb
R^n} dU \int\limits_{\shortstack{$\scriptstyle\omega_1 = \cdots
= \omega_m =0$ \\ $\scriptstyle |\omega_k|\leq 2\ve,\,\forall\,  k>m$}}d\omega_{m+1}\ldots d\omega_n\, \frac{|f(U+\omega) - f(U)|^p}{|\omega|^{j+sp}}\le \frac C{\ve^{n-j-m}}|f|_{W^{s,p}(\R^n)}^p.
\ee*
 
 We next estimate $P_1$. We may assume that $\ve=1/2$. We start from the following Poincar\'e type inequality for functions $g:\R\to \R$ \cite{leta}:
 \beq
 \label{fgi1}
 \int_0^{2}\left|g(t)- \fint_0^2 g \right|^p\, dt\le C\int\limits_0^2\int\limits_0^2dtd\tau\,\frac{|g(t)-g(\tau)|^p}{|t-\tau|^{1+sp}}.
 \eeq
 
 Using \eqref{fgi1}, we obtain
 \beq
 \label{fgi2}
 \int_\sigma^{\sigma+1}|g(t+1)-g(t)|^p\, dt\le C\int\limits_\sigma^{\sigma+2}\int\limits_\sigma^{\sigma+2}dtd\tau\, \frac{|g(t)-g(\tau)|^p}{|t-\tau|^{1+sp}},\ \forall\, \sigma\in\R.
 \eeq
 
 Integration of \eqref{fgi2} with respect to $\sigma$ leads to 
 \beq
 \label{fgi3}
 \int_\R^{\R+1}|g(t+1)-g(t)|^p\, dt\le C\iint\limits_{|t-\tau|<2}dtd\tau\,\frac{|g(t)-g(\tau)|^p}{|t-\tau|^{1+sp}},
 \eeq 
 and thus
 \beq
 \label{fgi4}
 P_1=C\int\limits_{\R^n}dU\, |f(U+e_1)-f(U)|^p\le C\int\limits_{\R^n}dU\int\limits_{|t|<2}dt\, |f(U+te_1)-f(U)|^p.
 \eeq
 
 We estimate $P_1$ via \eqref{fgi4} and Lemma \ref{lema6}.
 
  [Alternatively, we could have obtained the estimate $P_1\le C(\ve)|f|_{\wsp}^p$ directly by adapting the proof of Lemma \ref{lema6}.]
 
  The conclusion of this remark will be needed in order to complete the proof of Lemma \ref{l1aa} below.
\end{rema}
 
\medskip
We end this section with the 
\begin{proof}[Proof of Theorem \ref{thmd}]
Let $g\in\wsp (\R^n\, ; \R^m)$ be an extension of $f$, not necessarily  $F$-valued. We apply Lemma \ref{lemb1} to $g$. Let $\d g_k=g_{T_k,\ve_k}$ and let ${\mathscr C}(k)$ be the mesh of size $2\ve_k$ having $T_k$ as one of its centers. We take $\d f_k={g_k}_{|{\mathscr C}^k}$, where ${\mathscr C}^k$ is the union of cubes in ${\mathscr C}(k)$ which are contained in $\omega$.  Clearly, for large $k$ the maps $f_k$ have  all the desired properties.
\end{proof}

\begin{proof}[Short proof of Theorem \ref{thmd} when $1\le sp<n$]

We consider the mappings $f\xmapsto{F_\ve} F_\ve(f)$, where
\be*
F_\ve(f) : Q_\ve\times\R^n\to\R^m,\  F_\ve(f) (T, X)=f_{T,\ve}(X).
\ee*

Here, $f_{T,\ve}$ is the piecewise $j$-homogeneous extension associated to $T$ and $\ve$ as in this section.

\medskip
\noin{\bf Step 1.} Estimate for $s=0$\\
By estimate \eqref{a5} (which holds for an arbitrary $j$), for $1\le q<\infty$  we have
\beq
\label{ya0}
\|F_\ve(f)\|_{L^q(Q_\ve ; L^q(\R^n))}\le C\, \ve^{n/q} \| f\|_{L^q(\R^n)}, 
\text{ with $C$ independent of }\ve.
\eeq

\medskip
\noin{\bf Step 2.} Estimate for $s=1$\\
Let $1\le r<j+1$. We claim that 
\beq
\label{ya1}
\|F_\ve(f)\|_{L^r(Q_\ve ; W^{1, r}(\R^n))}\le C\, \ve^{n/r} \| f\|_{W^{1, r}(\R^n)}, 
\text{ with $C$ independent of }\ve\text{ or }f. 
\eeq

In view of Step 1, in order to obtain \eqref{ya1} it suffices to establish, with $C=C(n, j, r)$, the estimate
\beq
\label{yb1}
\int\limits_{Q_\ve}dT\int\limits_{\R^n}dX\, |\na f_T(X)|^r\le C\ve^n\int\limits_{\R^n}dX\, |\na f(X)|^r.
\eeq

We next observe that it suffices to prove \eqref{yb1} when $f\in C^\infty_c$. Indeed, assuming for the moment that \eqref{yb1} holds for such $f$, Step 1 combined with \eqref{yb1} for $f\in C^\infty_c$ and with a standard limiting argument implies that \eqref{yb1} holds for every $f\in W^{1,r}$.

We finally turn to the proof of \eqref{yb1} when $f\in C^\infty_c$. 
We use the same notation as at the beginning of this section: we set $U=T+2\ve K$, with $K\in\Z^n$, and we let $X^n$ be a point in $Q_\ve$, whose projection on the $j$-skeleton of $ Q_\ve$ is denoted $X^j$.  Set 
$
g_U(X^j)=f(U+X^j)$. Then for a.e. $X^n\in Q_\ve$ we have 
\beq
\label{yb2}
\na f_T(U+X^n)=\na g_U(X^j).
\eeq

We claim that \eqref{yb2} holds also in the  sense of distributions. Indeed, let ${\cal C}_{\ell, V, \ve}$ denote the $\ell$-skeleton obtained from the mesh of cubes of radius $\ve$ having $V$ as one of its centers. With this notation, the map  $f_T$ is locally Lipschitz in $\R^n\setminus {\cal C}_{n-j-1, W, \ve}$, where $W=T+(\ve, \ldots, \ve)$.  [The skeleton ${\cal E}={\cal C}_{n-j-1, W, \ve}$ is the \enquote{dual skeleton} of ${\cal C}_{j, T, \ve}$.] This observation leads to the validity of \eqref{yb2} in the  sense of distributions in $\R^n\setminus {\cal E}$. On the other hand, 
as we will see in a moment, we have 
\beq
\label{yb3}
|\na g_U(X^j)|\le C(f, \ve)\frac 1{\d\dist\, (U+X^n, {\cal E})}.
\eeq

In view of \eqref{yb3} and the fact that $f$ is compactly supported, we have
\beq
\label{yb4}
\na f_T\in L^1(\R^n\setminus {\cal E}).
\eeq
[Here, we also use the fact that $j\ge 1$ and thus $\cal E$ is a union of $m$-planes, with $m=n-j-1\le n-2$.]

In order to obtain \eqref{yb2}, it then suffices to invoke \eqref{yb4} and Lemma \ref{yb5}. [Note that this lemma applies to our situation since $j\ge 1$.]

In view of the above, it suffices to prove that
\beq
\label{yb6}
\int\limits_{Q_\ve}dT\, \sum_{K\in\Z^n}\int\limits_{Q_\ve}dX^n\, |\na g_{T+2\ve K}(X^j)|^r\le C\ve^n\int\limits_{\R^n}dX\, |\na f(X)|^r
\eeq
and to obtain, on the way, the estimate \eqref{yb3}. Splitting, in \eqref{yb6},  the integral in $X^n$ as a sum over $q\in \{ -1,1\}^{n-j}$ and over $\sigma\in S_{n-j, n}$, it suffices, by symmetry, to consider the case where  $X^n$ belongs to $Q_{\ve, q, \sigma}$, with 
\be*
q_i=1,\ \sigma(i)=i,\ \forall\, i\in \llbracket 1, n-j\rrbracket.
\ee*
With $q$ and $\sigma$ as above, every $X^n\in Q_{\ve, q, \sigma}$ satisfies
\beq
\label{yy1}
\ve \ge (X^n)_1\ge \cdots=(X^n)_{n-j}\ge \max\{ |(X^n)_i|;\,  n-j+1\le i\le n\}
\eeq
and
\beq
\label{yy2}
(X^j)_1=\cdots=(X^j)_{n-j}=\ve,\ (X^j)_i=\ve\frac{(X^n)_i}{(X^n)_{n-j}},\ \fo i\in \llbracket n-j+1, n\rrbracket.
\eeq

By \eqref{yb2}, \eqref{yy1} and \eqref{yy2}, for a.e. $X^n\in Q_{\ve, q, \sigma}$ we have
\beq
\label{yb7}
|\na f_T(U+X^n)|\le C\, \ve\, \sum_{i=n-j}^n\frac{|(X^n)_i|}{[(X^n)_{n-j}]^2}\, |\na f(U+X^j)|\le C\, \frac{\ve}{(X^n)_{n-j}}|\na f(U+X^j)|.
\eeq

In particular, \eqref{yb7} and the fact that 
\be*
\dist\, (U+X^n, {\cal E})=(X^n)_{n-j},\ \forall X^n\in Q_{\ve, q, \sigma},
\ee*
lead to \eqref{yb3}.

In view of \eqref{yb7}, in order to prove \eqref{yb6} it suffices to prove that 
\beq
\label{yb8}
I=\int\limits_{Q_\ve}dT\, \sum_{K\in\Z^n}\, \int\limits_{Q_{\ve, q, \sigma}}dX^n\, \frac{\ve^r}{[(X^n)_{n-j}]^{r}}\, |\na f(T+2\ve K+X^j)|^r
\le C\int\limits_{\R^n}dX\, |\na f(X)|^r.
\eeq

We let 
$X'=(X_1,\ldots, X_{n-j})$ and $Z''=(Z_{n-j+1},\ldots, Z_n)$, where 
\be*
Z_i=\ve\frac{(X^n)_i}{(X^n)_{n-j}}\in [-\ve, \ve],\ \forall\, i\in\llbracket n-j+1, n\rrbracket.
\ee*

We set
\beq
\label{fgy1}
W=(T_1+2\ve K_1+\ve,\ldots, T_{n-j}+2\ve K_{n-j}+\ve, T_{n-j+1}+2\ve K_{n-j+1}+Z_{n-j+1},\ldots, T_{n}+2\ve K_{n}+Z_{n}). 
\eeq

Then with the change of variables 
\be*
Q_{\ve, q, \sigma}\ni X\mapsto (X', Z'') 
\ee*
and with $W$ as in \eqref{fgy1} we have
\beq
\label{yb9}
I\le \int\limits_{Q_\ve}dT\, \sum_{K\in\Z^n}\ \int\limits_{0< (X^n)_{n-j}\le\cdots \le  (X^n)_1\le \ve}dX'\int\limits_{|Z''|\le \ve}dZ''\, \ve^{r-j}\, [(X^n)_{n-j}]^{j-r}\, |\na f(W)|^r.
\eeq

If we calculate, in \eqref{yb9}, the integral with respect to $X'$ and use the assumption $r<j+1$, we find (after summation in $K$) that
\be*
\begin{aligned}
I&\le C\, \ve^{n-j}\int\limits_{\R^n}dX\int\limits_{|Z''|<\ve}dZ''\, |\na f(X_1,\ldots, X_{n-j}, X_{n-j+1}+Z_{n-j+1},\ldots, X_n+Z_n)|^r\\
&= C\ve^n\int\limits_{\R^n}dX\, |\na f(X)|^r.
\end{aligned}
\ee*

Step 2 is now completed.

\medskip
\noin{\bf Step 3.} Estimate for $0<s<1$ (provided $sp\ge 1$ and $sp<j+1$)\\
Let $0<s<1$, $1\le p<\infty$ and $j\in\llbracket 1, n-1\rrbracket$ be such that $sp<j+1$. Pick  $1< q<\infty$ and $1< r<j+1$ such that
\beq
\label{ya2}
\frac 1p=\frac sr+\frac{1-s}q.
\eeq
This is always possible. Indeed,  since $sp<j+1$ we may pick some $r$ such that
\be*
\max\left\{ \frac 1{j+1},\, \frac 1{sp}-\frac 1s+1\right\}<\frac 1r<\frac 1{sp},
\ee*
and for any such $r$ the couple $(q, r)$, with  $q$ determined by \eqref{ya2}, has all the required properties.

We next recall three classical interpolation results. Given two Banach spaces $X$ and $Y$, we use the standard notation $[X, Y]_{s,p}$; see e.g. \cite[Section 1.5]{triebel1}. First, when \eqref{ya2} holds we have \cite[Section 2.4.2, Theorem 1 (a), eq. (2), p. 185]{triebel1}
\beq
\label{aaa1}
[W^{1,r}, L^q]_{s, p}=W^{s,p}.
\eeq

Next, if $X$ and $Y$ are Banach spaces and $s$, $p$, $q$, $r$ are as above, then \cite[Section 1.18.4, Theorem, eq. (3), p. 128]{triebel1}
\beq
\label{aaa2}
[L^r(\Omega ; X), L^q(\Omega ; Y)]_{s, p}=L^p(\Omega ; [X, Y]_{s,p}).
\eeq

By \eqref{aaa1} and \eqref{aaa2}, 
\beq
\label{ya3}
\text{with }r, q\text{ as in }\eqref{ya2},\text{ we have }[L^r(Q_\ve ; W^{1,r}(\R^n)) , L^q(Q_\ve ; L^q(\R^n))]_{s, p}=L^p(Q_\ve ; W^{s,p}(\R^n)).
\eeq

Final classical result. Let $s$, $p$, $q$, $r$, $X$ and $Y$ be as above. Let $F$ be a linear continuous operator from $X$ into $L^r(\Omega ; X)$ and from $Y$ into $L^q(\Omega ; Y)$. Then $F$ is linear continuous from $[X, Y]_{s,p}$ into $L^p(\Omega ; [X, Y]_{s,p})$ and satisfies the norm inequality
\beq
\label{aaa3}
\|F\|_{{\cal L}([X, Y]_{s,p} ; L^p(\Omega ; [X, Y]_{s,p}))}\le \|F\|_{{\cal L}(X ; L^r(\Omega ; X))}^s\, \|F\|_{{\cal L}(Y ; L^q(\Omega ; Y))}^{1-s}.
\eeq

By \eqref{ya0}, \eqref{ya1} and \eqref{aaa3}, we find that 
\beq
\label{ya4}
\|F_\ve(f)\|_{L^p(Q_\ve ; W^{s, p}(\R^n))}\le C\, \ve^{n/p} \| f\|_{W^{s, p}(\R^n)}, 
\text{ with $C$ independent of }\ve. 
\eeq

[In principle, the constant $C$ in \eqref{ya4} may depend on $\ve$, since we apply the interpolation result \eqref{ya3} in an $\ve$-dependent domain. The fact that $C$ does not depend on $\ve$ is obtained by a straightforward scaling argument: we consider, instead of $F_\ve$, the map 
\be*
G_\ve(f) : Q_1\times\R^n\to\R^m,\ G_\ve(f) (T, X)=f_{\ve\, T,\ve}(X).
\ee*
We obtain \eqref{ya4} by applying \eqref{aaa3} to $G_\ve(f)$ in  $Q_1$. Details are left to the reader.]

A clear consequence of \eqref{ya4} is
\beq
\label{ya5}
\frac 1{\ve^n}\int\limits_{Q_\ve}\|f_{T,\ve}-f\|_{W^{s,p}(\R^n)}^p\, dT\le C\|f\|_{W^{s,p}(\R^n)}^p.
\eeq

In order to complete the proof of Theorem \ref{thmd}, it suffices to obtain \eqref{ya6} below.
 
\medskip
\noin{\bf Step 4.} We have
\beq
\label{ya6}
\lim_{\ve\to 0}\, \frac 1{\ve^n}\int\limits_{Q_\ve}\|f_{T,\ve}-f\|_{W^{s,p}(\R^n)}^p\, dT=0,\ \forall\, f\in W^{s,p}(\R^n).
\eeq

Equation \eqref{ya6} is a version of \eqref{ya5} and is obtained as follows. We let $q$, $r$ be as in Step 3.

In view of \eqref{ya5}, it suffices to prove \eqref{ya6} when $f\in C^\infty_c$.  For such $f$, we have $f_{T, \ve}\to f$ uniformly in $T$ when $\ve\to 0$; this leads easily to
\beq
\label{ya7}
f_{T, \ve}\to f\text{ in }L^q\text{ uniformly in }T\text{ as }\ve\to 0.
\eeq

We obtain \eqref{ya6} via \eqref{ya7}, \eqref{ya1} and \eqref{aaa3}.
\end{proof}

\section{Restrictions of Sobolev maps to good complexes}
\l{appc}

Sections \ref{appc} to \ref{appe} are devoted to the proof of Theorem \ref{thme}. 

The current section is partly inspired by \cite[Appendix B, Appendix E]{bm}. 
The results we  prove here are fractional Sobolev versions of the following Fubini type result: if $f\in L^1(\R^2)$, then for a.e. $y\in\R$ we have $f(\cdot, y)\in L^1(\R)$.

 As elsewhere in this paper, we let $0<s<1$ and $1\le p<\infty$, and we let $f\in W^{s,p}(\R^n; \R^m)$.

We use notation consistent with  Section \ref{appb}, but we emphasize dependence of meshes on $T$ by writing, instead of ${\mathcal C}_{j}$,   ${\mathcal C}_{j, T, \ve}$ or (when $\ve$ is fixed) ${\mathcal C}_{j, T}$.  A generic point of ${\mathcal C}_{j, T}$ is denoted $X_\ast^j$, $Y_\ast^j,\ldots$ (instead of $U+X^j$ or $U+Y^j$). Also in order to be consistent with Section \ref{appb}, the projection of $X_\ast^n$ on ${\mathcal C}_{j, T}$ is denoted $X_\ast^j$. Similarly, if $j\ge 1$, then the projection of $X_\ast^j$ onto ${\mathcal C}_{j-1, T}$ is  denoted $X_\ast^{j-1}$; this projection is defined ${\mathcal H}^j$-a.e. on ${\mathcal C}_{j, T}$.

 Given a (say Borel and everywhere defined) map $f:\R^n\to\R^m$, an integer $j\in\llbracket  1, n-1\rrbracket$ and a point $T\in\R^n$, we define the norm 
\be*
\|f\|_{W^{s,p}({\mathcal C}_{j, T})}^p=\int\limits_{{\mathcal C}_{j, T}}d X_\ast^j\, |f(X_\ast^j)|^p +\iint\limits_{\shortstack{$\scriptstyle {\mathcal C}_{j, T}\times {\mathcal C}_{j, T}$ \\ $\scriptstyle |X_\ast^j-Y_\ast^j|<2\ve$}} dX_\ast^j   dY_\ast^j\,\frac{|f(X_\ast^j)-f(Y_\ast^j)|^p}{|X_\ast^j-Y_\ast^j|^{j+sp}}=\|f\|_{L^p({\cal C}_{j, T})}^p+|f|_{W^{s,p}({\cal C}_{j, T})}^p.
\ee*

The above definition extends to $j=0$ by replacing the integrals by sums. 

We will prove later in this section two results  on slicing, in which $\ve$ is fixed.
\begin{lemm}
\label{l1aa}
We have 
\begin{equation}
\label{e1aa}
\int\limits_{Q_\ve} \|f\|_{W^{s,p}({\mathcal C}_{j, T})}^p\, dT\le C(\ve)\|f\|_{W^{s,p}}^p, \ \forall\, j\in\llbracket 0, n-1\rrbracket.
\end{equation}
\end{lemm}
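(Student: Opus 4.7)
The plan is to split $\|f\|_{W^{s,p}({\mathcal C}_{j, T})}^p = \|f\|_{L^p({\mathcal C}_{j, T})}^p + |f|_{W^{s,p}({\mathcal C}_{j, T})}^p$ and to control the two terms separately: the $L^p$ part via a direct Fubini argument, the seminorm part by reduction to the estimate
$\sum_{|L|\le 1}\overline D_L\le C(\ve)\,|f|_{W^{s,p}(\R^n)}^p$
recorded in Remark \ref{uc5}.

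For the $L^p$ part, I would use the fact that ${\mathcal C}_{j,T}$ decomposes, up to ${\cal H}^j$-null overlaps, as a union of $j$-dimensional faces, each labelled by a subset $I\subset\llbracket 1,n\rrbracket$ of cardinality $n-j$ (the indices of the fixed coordinates) together with integers $k\in\Z^{n-j}$: on such a face one has $X_l=T_l+(2k_l+1)\ve$ for $l\in I$ and $X_{l'}$ free for $l'\notin I$. For each $I$, summing over $k$ and integrating $T_l\in(-\ve,\ve)$ for $l\in I$ exactly sweeps $\R^{n-j}$ in the corresponding coordinates, while the remaining $j$ variables $T_{l'}$ do not appear in the integrand and contribute an overall factor $(2\ve)^j$. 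Fubini then yields $\int_{Q_\ve}\|f\|_{L^p({\mathcal C}_{j,T})}^p\,dT\le C(n)\ve^j\|f\|_{L^p(\R^n)}^p$, which also covers $j=0$ via the trivial identity $\int_{Q_\ve}\sum_{K\in\Z^n}|f(T+2\ve K)|^p\,dT=\|f\|_{L^p}^p$.

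For the seminorm part (vacuous when $j=0$, since the constraint $|X_\ast^0-Y_\ast^0|<2\ve$ excludes every pair of distinct points of the lattice $T+2\ve\Z^n$), the key observation is that whenever $X_\ast^j,Y_\ast^j\in{\mathcal C}_{j,T}$ satisfy $|X_\ast^j-Y_\ast^j|<2\ve$, there exist $K\in\Z^n$ and $L\in\Z^n$ with $|L|\le 1$ such that $X_\ast^j\in\overline{Q_\ve(U)}$ and $Y_\ast^j\in\overline{Q_\ve(U+2\ve L)}$, with $U=T+2\ve K$. Decomposing the double integral according to $(K,L)$, applying the change of variable $U=T+2\ve K$ and summing in $K$ should reduce the problem to bounding $\sum_{|L|\le 1}S_L$, where
\begin{equation*}
S_L=\int_{\R^n}dU\iint\frac{|f(U+X^j)-f(U+Y^j)|^p}{|X^j-Y^j|^{j+sp}}\,d{\cal H}^j(X^j)\,d{\cal H}^j(Y^j),
\end{equation*}
with $X^j$ ranging over the $j$-skeleton of $\partial Q_\ve$ and $Y^j$ over the $j$-skeleton of $\partial Q_\ve(2\ve L)$. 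Reparametrizing each $j$-face of these skeletons by its $j$ free coordinates $\omega\in[-1,1]^j$, so that $d{\cal H}^j(X^j)=\ve^j d\omega$ (similarly $d{\cal H}^j(Y^j)=\ve^j d\lambda$), and comparing with \eqref{b18} for $L=0$ and its analog in Step 3.2 for $|L|=1$, should give $S_L\le C\ve^j\overline D_L$; Remark \ref{uc5} then produces $\sum_{|L|\le 1}S_L\le C(\ve)\,|f|_{W^{s,p}}^p$, closing the proof.

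The main technical point will be the bookkeeping needed to identify $S_L$ with $C\ve^j\overline D_L$: tracing the parametrization of each $j$-face by $\omega$ and matching it with the variables $(\omega,q,\sigma)$ and $(\lambda,r,\tau)$ used in the proof of Lemma \ref{lemb1}. The overcounting created by the fact that a single $j$-face lies in the closure of several cubes $Q_\ve(U+2\ve K)$ is harmless for an upper bound, contributing only a combinatorial constant depending on $n$ and $j$.
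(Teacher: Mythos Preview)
Your proposal is correct and follows essentially the same route as the paper: a Fubini argument for the $L^p$ part, and for the seminorm part a reduction to the quantities $\overline D_L$ (with $|L|\le 1$) of Section \ref{appb} together with the fixed-$\ve$ bound \eqref{yd1} from Remark \ref{uc5}. The only cosmetic difference is that the paper first splits the seminorm as $I_1+I_2$ according to $|X^j_\ast-Y^j_\ast|<\ve$ or $\ve\le|X^j_\ast-Y^j_\ast|<2\ve$ and disposes of $I_2$ via the $L^p$ estimate of Step 1, whereas you treat all pairs at once through the $\overline D_L$; your observation that $|X^j_\ast-Y^j_\ast|<2\ve$ already forces $|L|\le 1$ makes this split unnecessary.
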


\begin{lemm}
\label{l1ab}
We have 
\begin{equation}
\label{e1ab}
\int\limits_{Q_\ve} dT\int\limits_{{\mathcal C}_{j, T}}
dX_\ast^j\, \frac{|f(X_\ast^j)-f(X_\ast^{j-1})|^p}{|X_\ast^j-X_\ast^{j-1}|^{sp}}
\le C(\ve)\|f\|_{W^{s,p}}^p,\ \forall\, j\in\llbracket 1, n-1\rrbracket.
\end{equation}
\end{lemm}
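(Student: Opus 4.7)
The plan is to unfold the $T$-average into an integral over $\R^n$ and then reduce to the Besov type inequality of Lemma \ref{lema6}. First I parametrize the contribution of a single $j$-cell. A $j$-cell $F$ of ${\cal C}_{j,T}$ is determined by a triple $(K,I,\epsilon)$ with $K\in\Z^n$, $I\subset\llbracket 1,n\rrbracket$ of cardinality $j$, and $\epsilon\in\{-1,1\}^{I^c}$; its center is
\[
c=c_{K,I,\epsilon}(T)=T+2\ve K+\sum_{k\in I^c}\ve\epsilon_k e_k,
\]
and points of $F$ are written as $c+r\omega$ with $r\in[0,\ve]$ and $\omega$ on the sup-norm unit sphere $S_I$ of the coordinate plane $\R^I$. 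Then $X_\ast^{j-1}=c+\ve\omega$ and $|X_\ast^j-X_\ast^{j-1}|=\ve-r$, so the contribution of $F$ to the inner integral equals
\[
\int_0^\ve r^{j-1}\,dr\int_{S_I}d\omega\,\frac{|f(c+r\omega)-f(c+\ve\omega)|^p}{(\ve-r)^{sp}}.
\]

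For each fixed pair $(I,\epsilon)$, as $T$ varies in $Q_\ve$ and $K$ over $\Z^n$, the center $c$ covers $\R^n$ exactly once. Renaming $U=c$, then substituting $X=U+\ve\omega$ and $t=\ve-r$, the left-hand side of \eqref{e1ab} equals
\[
\sum_{(I,\epsilon)}\int_{\R^n}dX\int_0^\ve(\ve-t)^{j-1}\,dt\int_{S_I}d\omega\,\frac{|f(X-t\omega)-f(X)|^p}{t^{sp}}.
\]
Passing to Cartesian coordinates $\lambda=t\omega$ in $\R^I$ (with Jacobian $t^{j-1}=|\lambda|^{j-1}$), using $(\ve-|\lambda|)^{j-1}\le\ve^{j-1}$, together with the trivial bound $|\lambda|^{-(j-1+sp)}\le\ve\,|\lambda|^{-(j+sp)}$ valid for $|\lambda|\le\ve$, each summand is majorized by
\[
\ve^{j}\int_{\R^n}dX\int_{\lambda\in\R^I,\,|\lambda|\le\ve}d\lambda\,\frac{|f(X-\lambda)-f(X)|^p}{|\lambda|^{j+sp}}.
\]
Lemma \ref{lema6} with $l=j$ (after relabeling coordinates so that $I=\llbracket 1,j\rrbracket$) then yields an upper bound of the form $C\ve^j\|f\|_{W^{s,p}(\R^n)}^p$, and summing over the finitely many $(I,\epsilon)$ gives \eqref{e1ab} with $C(\ve)=C(n,j,s,p)\,\ve^j$.

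The main technical point I anticipate is the exponent matching in the application of Lemma \ref{lema6}: the radial change of variables inside a $j$-face naturally produces the denominator $|\lambda|^{j-1+sp}$, whereas Besov's inequality in dimension $j$ requires $|\lambda|^{j+sp}$. Here the mismatch is absorbed into an additional factor of $\ve$ extracted from $|\lambda|\le\ve$, which is legitimate because the statement only asks for an $\ve$-dependent constant. This contrasts with the averaged estimates of Section \ref{appa}, where the analogous radial computations had to be carried out without such slack and required the more delicate cancellations of Lemmas \ref{lema2} and \ref{lema5}.
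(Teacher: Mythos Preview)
Your proof is correct. There is one cosmetic blemish: in your parametrization by triples $(K,I,\epsilon)$, each $j$-face of ${\cal C}_{j,T}$ is represented $2^{n-j}$ times (once for each adjacent $n$-cube), so the word ``equals'' should be ``is bounded above by a constant times''. This is harmless for the inequality.

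Your route differs from the paper's. The paper first establishes the case $j=n$, obtaining
\[
\int_{Q_\ve}dT\int_{\R^n}dX_\ast^n\,\frac{|f(X_\ast^n)-f(X_\ast^{n-1})|^p}{|X_\ast^n-X_\ast^{n-1}|^{sp}}\le C(\ve)\,|f|_{W^{s,p}(\R^n)}^p,
\]
and then reduces the general $j$ to this via slicing: applying the $n$-dimensional inequality inside each $j$-plane of ${\cal C}_{j,T}$ yields a right-hand side involving $|f|_{W^{s,p}({\cal C}_{j,T})}^p$, and this is finally controlled by Lemma~\ref{l1aa}. You instead parametrize the $j$-faces directly, perform the same radial computation in $\R^I$, and invoke Lemma~\ref{lema6} (Besov's inequality) with $l=j$ to pass from the $j$-dimensional increment integral to the full $W^{s,p}(\R^n)$ seminorm. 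Your argument is more self-contained: it avoids the intermediate appeal to Lemma~\ref{l1aa}, whose proof (through Remark~\ref{uc5} and the estimates of Step~3 in Lemma~\ref{lemb1}) is considerably heavier than the single application of Lemma~\ref{lema6} you use. The paper's detour has the mild advantage of recycling already-established machinery, but for this particular lemma your direct approach is the shorter one.
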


For $j\in\llbracket 1, n-1\rrbracket$, we define an ad hoc space ${\mathcal W}^{s,p}_j={\mathcal W}^{s,p}_{j, T,\ve}$ as follows: ${\mathcal W}^{s,p}_j$ consists of the functions  $g:{\mathcal C}_{j,T}\to\R^m$ such that
\beq
\label{e1ac}
\|g\|_{W^{s,p}({\mathcal C}_{\ell, T})}<\infty,\ \forall\, \ell\in \llbracket 1, j\rrbracket
\eeq
and
\beq
\label{e1ad}
\int\limits_{{\mathcal C}_{\ell, T}}dX_\ast^\ell\, 
\frac{|f(X_\ast^\ell)-f(X_\ast^{\ell-1})|^p}{|X_\ast^\ell-X_\ast^{\ell-1}|^{sp}}<\infty,\ \forall\, \ell\in \llbracket 1, j\rrbracket. 
\eeq

Though this is not needed in order to understand the remaining part of this article, we pause here to comment the definition of ${\mathcal W}^{s,p}_j$, which is inspired by Hang and Lin \cite[Section 3]{hl} and also by \cite{bm}.  \cite{hl} deals with $W^{1,p}$-maps, and ${\mathcal W}^{1,p}_j$ is defined there as the space of (say Borel) maps defined on ${\cal C}_j$ such that $f_{|{\cal C}_\ell}$ is in $W^{1,p}({\cal C}_\ell)$, $ \ell=\llbracket 1, j\rrbracket$, and such that 
\beq
\label{fho1}
\tr \left( f_{|{\cal C}_\ell} \right)=f_{|{\cal C}_{\ell-1}}, \ \ell=\llbracket 1, j\rrbracket.
\eeq

Clearly, if $f\in {\mathcal W}^{s,p}_{j, T, \ve}$ and $\ell\le j$, then the restriction of $f$ to an $\ell$-dimensional cube $C$ of the mesh ${\mathcal C}_{\ell, T, \ve}$ belongs to $W^{s,p}(C)$. When $sp>1$ (and thus maps in $\wsp$ have traces), one may prove that condition \eqref{e1ad} implies 
\eqref{fho1}.

When $sp=1$, we are in a limiting case of the trace theory: maps in $W^{1/p, p}$ do not have traces, but sometimes have \enquote{good restrictions} \cite[Appendix B]{bm}. In this case, condition \eqref{e1ad} implies that $f_{|{\cal C}_{\ell-1}}$ is the good restriction to ${\cal C}_{\ell-1}$ of $f_{|{\cal C}_{\ell}}$ (which is the substitute of \eqref{fho1} when $sp=1$). 

When $sp<1$, one may still view \eqref{e1ad} as a substitute of \eqref{fho1}. Note however that in this case condition \eqref{e1ad} is very mild, since the value of $f$ at the interior of ${\cal C}_{\ell}$ combined with condition \eqref{e1ad} does not determine ( ${\cal H}^{\ell-1}$-a.e.) the value of $f$ on  ${\cal C}_{\ell-1}$.

As we will see in the next section, property  \eqref{e1ad} is essential in the proof of Lemma \ref{ua1}.

Let $s$, $p$ be such that $1\le sp<n$. Let $j$ be an integer such that $sp<j+1\le n$. For such $j$, we consider $f_{T,\ve}$ as in Section \ref{appb}. Combining  Lemmas \ref{l1aa} and \ref{l1ab} with the fact that, by the proof of Lemma \ref{lemb1}, there exists a sequence $\ve_k\to 0$ such that
\be*
\frac 1{(\ve_k)^n}\int\limits_{Q_{\ve_k}}\|f-f_{T,\ve_k}\|_{W^{s,p}}^p\,  dT\to 0\ \text{as }k\to\infty,
\ee*
 we obtain the following
\begin{coro}
\label{c1aa}
Let $s$, $p$, $j$ be such that $1\le sp<j+1\le n$. Let $f\in W^{s,p}(\R^n ; \R^m)$ be a Borel function. Then there exist sequences $\ve_k\to 0$ and $\{ T_k\}\subset\R^n$ such that:\\
1. The restriction  $f^k$ of $f$ to ${\mathcal C}_{j, T_k,\ve_k}$ belongs to ${\mathcal {W}}^{s,p}_{j, T_k, \ve_k}$, $\forall\, k$.\\
2. If $f_k$ is the $j$-homogeneous extension  of $f^k$, then $f_k\to f$ in $W^{s,p}$ as $k\to\infty$.
\end{coro}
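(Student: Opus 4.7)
The plan is a straightforward averaging/pigeonhole argument that combines the three ingredients now at our disposal. Namely, by the proof of Lemma \ref{lemb1} (as recalled in the paragraph preceding the statement), there exists a sequence $\ve_k\to 0$ with
\be*
\alpha_k := \frac{1}{\ve_k^n}\int\limits_{Q_{\ve_k}} \|f - f_{T,\ve_k}\|_{W^{s,p}}^p\, dT \longrightarrow 0.
\ee*
On the other hand, Lemmas \ref{l1aa} and \ref{l1ab} give, for each $\ve>0$ and each $\ell\in\llbracket 1,j\rrbracket$, averaged $T$-integrability of $\|f\|_{W^{s,p}({\mathcal C}_{\ell,T,\ve})}^p$ and of the slicing quantity appearing in \eqref{e1ab}, respectively. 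The whole point is to extract a single $T_k\in Q_{\ve_k}$ for which all these quantities are simultaneously under control.

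Concretely, I would introduce the following exceptional subsets of $Q_{\ve_k}$:
\be*
B_k^{(1)} := \{ T\in Q_{\ve_k}\ ;\ \|f - f_{T,\ve_k}\|_{W^{s,p}}^p > \sqrt{\alpha_k}\},
\ee*
which by Chebyshev satisfies $|B_k^{(1)}|\le \ve_k^n\sqrt{\alpha_k}$; and, for each $\ell\in\llbracket 1, j\rrbracket$, the two sets $B_k^{(2,\ell)}$, $B_k^{(3,\ell)}$ on which, respectively, $\|f\|_{W^{s,p}({\mathcal C}_{\ell,T,\ve_k})}=\infty$ or the $X_\ast^{\ell}\!\to\! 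X_\ast^{\ell-1}$ integral of \eqref{e1ab} is infinite. By Lemmas \ref{l1aa} and \ref{l1ab}, each of these latter sets is Lebesgue-negligible. Hence the total bad set has measure at most $\ve_k^n\sqrt{\alpha_k}$, which is strictly smaller than $|Q_{\ve_k}|=(2\ve_k)^n$ for all large $k$; one then picks any $T_k\in Q_{\ve_k}$ outside this union.

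For such $T_k$, the inequality $\|f - f_{T_k,\ve_k}\|_{W^{s,p}}^p\le \sqrt{\alpha_k}\to 0$ immediately yields claim 2, once we observe that, by the very definition \eqref{fgh1}, $f_k:=f_{T_k,\ve_k}$ is exactly the $j$-homogeneous extension of $f^k=f_{|{\mathcal C}_{j,T_k,\ve_k}}$. Simultaneously, the finiteness conditions \eqref{e1ac} and \eqref{e1ad} hold for $f^k$ and every $\ell\in\llbracket 1, j\rrbracket$, so $f^k\in{\mathcal W}^{s,p}_{j,T_k,\ve_k}$, giving claim 1. There is no real obstacle: the only delicate point is that all three properties must be imposed on one and the same center $T_k$, which is why the argument is packaged as a comparison of measures rather than treating each property in isolation.
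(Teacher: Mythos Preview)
Your argument is correct and is precisely the fleshing-out of what the paper leaves implicit: the paper simply says ``combining Lemmas \ref{l1aa} and \ref{l1ab} with the fact that $\dfrac{1}{\ve_k^n}\displaystyle\int_{Q_{\ve_k}}\|f-f_{T,\ve_k}\|_{W^{s,p}}^p\,dT\to 0$'' and states the corollary, whereas you make the selection of a common $T_k$ explicit via Chebyshev plus the null sets coming from the two lemmas. This is exactly the intended averaging/pigeonhole step, so there is no substantive difference in approach.
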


The remaining part of this section is devoted to the proofs of Lemmas \ref{l1aa} and \ref{l1ab}. 

A word about the proofs. Many of the calculations we need in  Sections \ref{appc}--\ref{appe}  are quite close to the ones in Section \ref{appb}. For such calculations, we point to the analog formulas in Section \ref{appb} and omit part of details.

We will use the same notation as in Section \ref{appb}, and more specifically as in Step 3.1 in the proof of Lemma \ref{lemb1}; see on the one hand \eqref{equb} and \eqref{equc}, and on the other hand \eqref{defXn} and the derivation of \eqref{equa} starting from \eqref{defXn}.

\begin{proof}[Proof of Lemma \ref{l1aa}] {\bf Step 1.} Averaged estimate of $\|f\|_{L^p({\cal C}_{j, T})}^p$\\
We establish here the identity
\beq
\label{ua2}
\int\limits_{Q_\ve}\|f\|_{L^p({\cal C}_{j, T})}^p\, dT= C(n, j)\, \ve^{j}\, \|f\|_{L^p(\R^n)}^p.
\eeq

Indeed, arguing as in the proof of \eqref{equb} and \eqref{equa} and with $X^j$ as in \eqref{defXj}, we have
\beq
\label{ua3}
\begin{aligned}
\int\limits_{Q_\ve}\|f\|_{L^p({\cal C}_{j, T})}^p\, dT&=
2^{j-n}\sum_{q\in \{ -1, 1\}^{n-j}}\ 
\sum_{\sigma\in S_{n-j, n}}\ve^j\int\limits_{|\omega|\le 1}d\omega\int\limits_{\R^n} dU 
  \,  |f(U+X^j)|^p.
\end{aligned}
\eeq
[The constant $2^{j-n}$ comes from the fact that  on the right-hand side of \eqref{ua3} the integral over a $j$-dimensional cube $C$ of ${\cal C}_{j, T}$ is counted $2^{n-j}$ times.]

In order to obtain \eqref{ua2}, it suffices to observe that the 
last integral in \eqref{ua3} does not depend on $\omega$. 

\medskip
\noindent
{\bf Step 2.} Averaged estimate of $|f|_{W^{s, p}({\cal C}_{j, T})}^p$\\
We have
\beq
\label{uc1}
\begin{aligned}
|f|_{W^{s,p}({\cal C}_{j, T})}^p=&\iint\limits_{\shortstack{$\scriptstyle {(X_\ast^j, Y_\ast^j)\in {\cal C}_{j, T}\times {\cal C}_{j, T}} $ \\ $\scriptstyle {|X_\ast^j-Y_\ast^j|< 2\ve} $}} dX_\ast^j dY_\ast^j\, \frac{|f(X_\ast^j)-f(Y_\ast^j)|^p}{|X_\ast^j-Y_\ast^j|^{j+sp}}
=I_1(T)+I_2(T),
\end{aligned}
\eeq
where
\be*
I_1(T)=\iint\limits_{|X_\ast^j-Y_\ast^j|<\ve}\ldots,\ I_2(T)=\iint\limits_{\ve\le |X_\ast^j-Y_\ast^j|<2\ve}\ldots
\ee*

We first note that
\beq
\label{ucc}
I_2(T)\le C\int\limits_{{\cal C}_{j, T}}dX_\ast^j \int\limits_{\ve\le |X_\ast^j-Y_\ast^j|<2 \ve}dY_\ast^j\, \frac 1{|Y_\ast^j-X_\ast^j|^{j+sp}}\, |f(X_\ast^j)|^p= C(\ve) \int\limits_{{\cal C}_{j, T}}dX_\ast^j\, |f(X_\ast^j)|^p,
\eeq
since
\be*
\int\limits_{\ve\le |X_\ast^j-Y_\ast^j|< 2 \ve}dY_\ast^j\, \frac 1{|Y_\ast^j-X_\ast^j|^{j+sp}}= C(\ve)<\infty, \ \forall\, T,\ \forall\,  X_\ast^j.
\ee*

By \eqref{ucc} and Step 1, we have
\beq
\label{uc2}
\int\limits_{Q_\ve}  I_2(T)\, dT\le C(\ve)\, \|f\|_{L^p(\R^n)}^p.
\eeq

We next note that (with notation as in \eqref{defXj} and \eqref{equa})
\beq
\label{uc3}
I_1(T)\le C\, \ve^{2j}\sum^\circ \int\limits_{|\omega|\le 1}d\omega\int\limits_{|\lambda|\le 1}d\lambda\, \frac{|f(T+2\ve K+X^j)-f(T+2\ve K+2\ve L+Y^j)|^p}{|X^j-(2\ve L+Y^j)|^{j+sp}},
\eeq
where 
\be*
\sum^\circ=\sum_{\shortstack{$\scriptstyle {L\in\Z^n} $ \\ $\scriptstyle {|L|\le 1} $}}\, \sum_{K\in\Z^n}\, \sum_{q, r\in \{-1,1\}^{n-j}}\, \sum_{\sigma, \tau\in S_{n-j, n}}.
\ee*

Integrating \eqref{uc3}, we find that
\beq
\label{uc4}
\int\limits_{Q_\ve}I_1(T)\, dT\le C\, \ve^{2j} \sum_\circ\ \int\limits_{|\omega|\le 1}d\omega\int\limits_{|\lambda|\le 1}d\lambda\, \int_{\R^n}dU\, \frac{|f(U+X^j)-f(U+2\ve L+Y^j)|^p}{|X^j-(2\ve L+Y^j)|^{j+sp}}, 
\eeq
with 
\be*
\sum_\circ=\sum_{\shortstack{$\scriptstyle {L\in\Z^n} $ \\ $\scriptstyle {|L|\le 1} $}}\, \sum_{q, r\in \{-1,1\}^{n-j}}\, \sum_{\sigma, \tau\in S_{n-j, n}}.
\ee*
By \eqref{uc4} and estimate \eqref{yd1} in Remark \ref{uc5}, we have
\beq
\label{uc6}
\int\limits_{Q_\ve}I_1(T)\, dT\le C(\ve)|f|_{W^{s,p}(\R^n)}^p.
\eeq

We complete the proof of Lemma \ref{l1aa} using \eqref{uc1}, \eqref{uc2} and \eqref{uc6}.
\end{proof}

\begin{proof}[Proof of Lemma \ref{l1ab}] {\bf Step 1.} A dimensional reduction\\
Assume for the moment that we proved the following estimate (with $X_\ast^{n-1}$ the projection of $X_\ast^n$ onto ${\cal C}_{n-1, T,\ve}$):
\beq
\label{ue1}
I=\int\limits_{Q_\ve}dT\int\limits_{\R^n}dX_\ast^n\, \frac{|f(X_\ast^n)-f(X_\ast^{n-1})|^p}{|X_\ast^n-X_\ast^{n-1}|^{sp}}\le C(n,\ve)\iint\limits_{|X_\ast^n-Y_\ast^n|<\ve}dX_\ast^ndY_\ast^n\, \frac{|f(X_\ast^n)-f(Y_\ast^n)|^p}{|X_\ast^n-Y_\ast^n|^{n+sp}}.
\eeq

Then we claim that the conclusion of the lemma holds. Indeed, if $j\in\llbracket 1, n-1\rrbracket$ then \eqref{ue1} applied with $n=j$ and with $\R^n$ replaced by the intersection of ${\cal C}_{j, T}$ with the $j$-dimensional plane
\be*
\{ (x_1,\ldots, x_n);\, x_l=T_l+2\ve\, K_l, \forall\, l\in I\}, \ \text{with }\# I=n-j\text{ and }K_l\in\Z
\ee*
leads (after  the use of the Fubini theorem in the variables $T_l$ with $l\not\in I$ and summation in $I$) to 
\beq
\label{ue2}
\int\limits_{Q_\ve}dT\int\limits_{{\mathcal C}_{j, T}}
dX_\ast^j\, \frac{|f(X_\ast^j)-f(X_\ast^{j-1})|^p}{|X_\ast^j-X_\ast^{j-1}|^{sp}}\le C(n, j, \ve)\int\limits_{Q_\ve}dT\iint\limits_{\shortstack{$\scriptstyle {(X_\ast^j, Y_\ast^j)\in {\cal C}_{j, T}\times {\cal C}_{j, T}} $ \\ $\scriptstyle {|X_\ast^j-Y_\ast^j|<\ve} $}}dX_\ast^j dY_\ast^j\, \frac{|f(X_\ast^j)-f(Y_\ast^j)|^p}{|X_\ast^j-Y_\ast^j|^{j+sp}}.
\eeq

We then obtain the conclusion of Lemma \ref{l1ab} using \eqref{ue2} and Lemma \ref{l1aa}.

\medskip
\noindent
{\bf Step 2.} Proof of \eqref{ue1}\\
We follow Step 2 in the proof of Lemma \ref{lema1} in Section \ref{appa}. Following the calculation \eqref{ue6}, the left-hand side $I$ of \eqref{ue1} satisfies 
\beq
\label{ue8}
\begin{aligned}
I=\int\limits_{\R^n}dX\int\limits_{|Y|<\ve}dY\, \frac{(\ve-|Y|)^{n-1}}{|Y|^{n+sp-1}}\, |f(X)-f(X-Y)|^p.
\end{aligned}
\eeq

We obtain \eqref{ue1} by noting that 
\be*
\frac{(\ve-|Y|)^{n-1}}{|Y|^{n+sp-1}}\le C(\ve)\frac 1{|Y|^{n+sp}}   \ \text{if }|Y|<\ve.\qedhere
\ee*
\end{proof}

\section{Approximation of maps defined on good skeletons}
\l{appd}
\def\mc{\mathscr C}
Throughout the next two sections, we take $0<s<1$, $1\le p<\infty$, $j\in\llbracket 1, n-1\rrbracket$ and we use the same notation as in Sections \ref{appb} and \ref{appc}. We consider a fixed finite submesh $\mc$ of $\mc_n$ and a map $g:{\cal C}_j\cap \mc\to\R^m$. For such maps, we  define  the norm
\be*
\|g\|_{L^p}^p=\|g\|_{L^p(\mc_j\cap\mc)}^p=\int\limits_{\mc_j\cap\mc}dX_\ast^j\, |g(X_\ast^j)|^p
\ee*
and the semi-norm
\beq
\label{fgz1}
|g|_{W^{s,p}}^p=|g|_{W^{s,p}(\mc_j\cap\mc)}^p=\int\limits_{\mc_j\cap\mc}dX_\ast^j\ \int\limits_{\mc_j\cap\mc}dY_\ast^j\, \frac{|g(X_\ast^j)-g(Y_\ast^j)|^p}{|X_\ast^j-Y_\ast^j|^{j+sp}}.
\eeq

Note that, in line with definition of $W^{s,p}({\cal C}_j)$, we could have restricted the double integral in \eqref{fgz1} to $|X^j_\ast-Y^j_\ast|<2\ve$. However, ${\cal C}$ being fixed and bounded, the double integral over $|X^j_\ast-Y^j_\ast|\ge 2\ve$ is controlled by $\|g\|_{L^p}^p$.

With the natural definition, we also consider the space ${\cal W}^{s,p}_j={\cal W}^{s,p}_j(\mc_j\cap\mc)$.

In this section, we adapt to the fractional Sobolev case some approximation techniques of maps defined on skeletons devised by Hang and Lin    \cite[Section 3]{hl}. The main result  is the following 
\begin{lemm}
\label{ua1}
Let $0<s<1$, $1\le p<\infty$  and $j\in\N$ be such that $1\le j\le sp< n$.  Let $N$ be a compact manifold without boundary embedded in $\R^m$. Let $g\in {\mathcal W}^{s,p}_j({\mathcal C}_j\cap\mc ; N)$. Then there exists a sequence $\{g^k\}\subset \Lip ({\mathcal C}_j\cap\mc ; N)$ such that $g^k\to g$ in $W^{s,p}({\mathcal C}_j\cap \mc)$.
\end{lemm}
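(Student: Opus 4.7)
The plan is to proceed by induction on the dimension $\ell\in \llbracket 0,j\rrbracket$, constructing for each $\varepsilon>0$ a Lipschitz map $g^{\varepsilon}_{\ell}:\mc_\ell\cap\mc\to \R^m$ (a priori not $N$-valued) such that $g^{\varepsilon}_{\ell}$ is close to $g$ in $W^{s,p}(\mc_\ell\cap\mc)$, takes values in a small neighborhood $N_\delta$ of $N$, and its restriction to $\mc_{\ell-1}$ coincides with $g^{\varepsilon}_{\ell-1}$. Once $\ell=j$ is reached, the smooth nearest-point retraction $\pi_N:N_\delta\to N$ is Lipschitz, so $g^k:=\pi_N\circ g^{\varepsilon_k}_j$ will be Lipschitz, $N$-valued, and converge to $g$ in $W^{s,p}$ because $\pi_N(g(X_\ast^j))=g(X_\ast^j)$ a.e. and $\pi_N$ does not increase the $W^{s,p}$-seminorm modulo a constant.

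The base case $\ell=0$ is immediate: $\mc_0$ is finite and, by \eqref{e1ad} applied at $\ell=1$ (or directly, since $g$ is everywhere defined and Borel and the $0$-skeleton is discrete), we may simply take $g^\varepsilon_0:=g|_{\mc_0}$, which is $N$-valued and trivially Lipschitz. For the inductive step $\ell\to\ell+1$, fix an $(\ell+1)$-cube $C$ of the mesh. The strategy on $C$ has three ingredients: (i) a convolution $g_{\eta}:=g|_C\ast\rho_\eta$ on the central region $C_{\text{in}}$ of $C$ (well-defined for small $\eta$), which is smooth and satisfies $\|g_\eta-g\|_{W^{s,p}(C_{\text{in}})}\to 0$ and $\mathrm{dist}(g_\eta(x),N)\to 0$ uniformly on compacts by Lebesgue's differentiation theorem combined with the $W^{s,p}$-continuity of shifts; (ii) the Lipschitz boundary datum $h:=g^\varepsilon_\ell$ on $\partial C\subset \mc_\ell$, which we Lipschitz-extend to a narrow collar $\{x\in C:\operatorname{dist}(x,\partial C)<\eta\}$ by the formula $\widetilde h(x)=h(\pi_{\partial C}(x))$, where $\pi_{\partial C}$ is the radial projection from the center of $C$; (iii) a cutoff $\chi_\eta$ supported in $C_{\text{in}}$ and equal to $1$ away from a slightly thicker collar, used to form $g^\varepsilon_{\ell+1}:=\chi_\eta\, g_\eta+(1-\chi_\eta)\,\widetilde h$ on $C$. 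The $W^{s,p}(C)$-convergence $g^\varepsilon_{\ell+1}\to g|_C$ is then reduced to estimating the Gagliardo seminorm of a blending in a thin collar, which uses \eqref{e1ad} in the form that $\|g|_C-\widetilde h\|_{W^{s,p}(\text{collar})}$ is controlled by the good-restriction quantity $\int_{\partial C}|g(X_\ast^{\ell+1})-g(X_\ast^\ell)|^p/|X_\ast^{\ell+1}-X_\ast^\ell|^{sp}\, dX_\ast^{\ell+1}$ (which is the crucial role of the hypothesis $g\in\mathcal{W}^{s,p}_j$). The collar width $\eta$ and the mollification parameter are coupled so that the collar contribution tends to $0$ as $\varepsilon\to 0$.

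The main obstacle is step (iii): showing that the blending has $W^{s,p}$-seminorm close to that of $g|_C$ in the critical regime $sp\ge\ell+1$. One must decompose the double integral in \eqref{fgz1} into interior-interior, boundary-boundary, and interior-boundary pieces; the first two are controlled by the inductive hypothesis and by standard mollification estimates, while the third is controlled by a Hardy-type inequality relating $\int |g(x)-\widetilde h(x)|^p\, \mathrm{dist}(x,\partial C)^{-sp}\, dx$ to the trace-like quantity \eqref{e1ad}. This is the fractional analogue of the Hang--Lin collar estimate for $W^{1,p}$, and the assumption $sp\ge j\ge \ell+1$ is precisely what makes this Hardy inequality available (via the $W^{s,p}$-trace theory on the cube, or directly by adapting Lemma \ref{lema6}).

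Once the inductive construction is complete at $\ell=j$, the pointwise closeness of $g^{\varepsilon}_j$ to the essential image of $g$ (which lies in $N$) combined with a Luzin-type argument based on the $W^{s,p}$-convergence gives $\mathrm{dist}(g^\varepsilon_j,N)\to 0$ uniformly outside a set of small $\mathcal H^j$-measure; a simple modification of $g^\varepsilon_j$ on this bad set (using again the Lipschitz boundary data on lower skeletons) ensures that $g^\varepsilon_j$ takes values in $N_\delta$ everywhere. Composing with $\pi_N$ then yields the desired sequence $\{g^k\}\subset \Lip(\mc_j\cap\mc;N)$ with $g^k\to g$ in $W^{s,p}$, completing the proof.
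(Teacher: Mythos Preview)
Your overall architecture---induction on the skeleton dimension, mollification in the interior of each cube, Lipschitz extension of the already-constructed boundary datum into a collar, blending, and finally composition with the nearest-point projection $\pi_N$---is exactly the scheme the paper follows. The paper packages the collar step slightly differently: before doing anything else it replaces $g$ by the ``filled-in'' map $g_\mu$ (Lemma~\ref{ug1}), which makes $g$ coincide with its radial boundary value in a whole collar; the convergence $g_\mu\to g$ in $W^{s,p}$ is the technical heart of Section~\ref{appd}, and it plays the role of your Hardy-type collar estimate. So on that front your sketch and the paper are close.

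There is, however, a genuine gap in your argument, and it concerns precisely where the hypothesis $j\le sp$ is used. You claim that $\dist(g_\eta(x),N)\to 0$ ``uniformly on compacts by Lebesgue's differentiation theorem combined with the $W^{s,p}$-continuity of shifts''. This is false as stated: Lebesgue differentiation together with $L^p$-continuity of translations yields only $L^p$ convergence of $g_\eta$ to $g$, not uniform convergence, and there is no reason for the averages of an arbitrary $L^p$ map into $N$ to stay near $N$. The correct mechanism is the Schoen--Uhlenbeck/VMO observation (Lemma~\ref{uh2} in the paper): because the $(\ell+1)$-cube has dimension $\ell+1\le j\le sp$, the embedding $W^{s,p}\hookrightarrow \text{VMO}$ (or $C^0$ when $sp>\ell+1$) forces the mean oscillation of $g$ over small balls to be small, hence $g\ast\rho_t$ lies in $N_\delta$ for all sufficiently small $t$. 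This is the only place where $j\le sp$ is used; it has nothing to do with a Hardy inequality. Your later ``Luzin-type argument'' to repair the bad set is both vague and unnecessary once this point is corrected---and as written it is not clear how one modifies a Lipschitz map on a small-measure set while keeping it Lipschitz and $W^{s,p}$-close.

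A secondary point: in the transition region where $0<\chi_\eta<1$, you need the convex combination $\chi_\eta g_\eta+(1-\chi_\eta)\widetilde h$ to stay in $N_\delta$. For this it is not enough that each term separately lies in $N_\delta$; you need them to be \emph{uniformly close to each other} there. The paper handles this by first passing to $g_\mu$ (so that $g$ is radially constant in the collar, hence $g_\eta\approx g(X^{j-1}_\ast)$ there) and by using that the lower-skeleton approximant $F^k\to f$ \emph{uniformly} (since $j-1<sp$ gives a continuous embedding). Your sketch does not address this, and without the preliminary $g_\mu$ step it is not automatic.
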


Two difficulties arise in the proof of Lemma \ref{ua1}. The first one is to show that $\R^m$-valued maps $g$ in $ {\mathcal W}^{s,p}_j$ can be approximated by Lipschitz maps. This is already a non trivial task. An additional difficulty occurs when $g$ is $N$-valued. In this case, we have to prove approximation with $N$-valued Lipschitz maps.

It will be convenient to start by reducing  Lemma \ref{ua1} to a slightly easier to prove statement.

\begin{lemm}
\label{uf1}
Let $0<s<1$, $1\le p<\infty$  and $j\in\N$ be such that $1\le j\le sp< n$.  Let $N$ be a compact manifold without boundary embedded in $\R^m$. Let $\delta>0$ be sufficiently small and define
\beq
\label{uh3}
M=\{ x\in\R^m;\, \dist (x, N)\le\delta\}.
\eeq

Let $g\in {\mathcal W}^{s,p}_j({\mathcal C}_j\cap {\cal C} ; N)$. Then there exists a sequence $\{G^k\}\subset \Lip ({\mathcal C}_j\cap {\cal C} ; M)$ such that $G^k\to g$ in $W^{s,p}({\mathcal C}_j\cap {\cal C})$.
\end{lemm}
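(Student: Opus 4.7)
The plan is to prove the lemma by induction on $j$, exploiting the assumption $j\le sp$, which guarantees that on each $j$-dimensional cube of $\mathcal C_j$ the trace operator onto the boundary $(j-1)$-skeleton is well-defined; the compatibility condition \eqref{e1ad} in the definition of $\mathcal W^{s,p}_j$ then forces this trace to coincide with $g|_{\mathcal C_{j-1}}$. A preliminary reduction is to construct $\mathbb R^m$-valued Lipschitz approximations $\widetilde G^k\to g$ in $W^{s,p}(\mathcal C_j\cap\mathcal C)$, and then post-compose with a Lipschitz retraction $\Pi_M\colon\mathbb R^m\to M$, which exists when $\delta$ is small enough since $N$ is a smooth compact submanifold of $\mathbb R^m$. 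Then $G^k:=\Pi_M\circ\widetilde G^k$ takes values in $M$; since $\Pi_M$ fixes $M$ and $g$ is $N$-valued (so $\Pi_M\circ g=g$), the standard continuity of Lipschitz compositions on $W^{s,p}$ yields $G^k\to g$ in $W^{s,p}$.

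For the base case $j=1$, on each edge $e$ of $\mathcal C_1\cap\mathcal C$ the restriction $g|_e$ lies in $W^{s,p}(e)$ with $sp\ge 1$, hence has well-defined endpoint values that, by \eqref{e1ad}, agree with $g|_{\mathcal C_0}$ at the relevant vertices. I would extend $g|_e$ outside $e$ by reflection, mollify with a kernel of width $1/k$, and add an affine correction supported near the endpoints to restore the exact vertex values. This yields Lipschitz approximations on each edge that paste globally along $\mathcal C_0$.

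For the inductive step, assume Lipschitz maps $\widetilde H^k$ have been constructed on $\mathcal C_{j-1}\cap\mathcal C$ with $\widetilde H^k\to g|_{\mathcal C_{j-1}}$ in $W^{s,p}$. On each $j$-cube $Q\subset\mathcal C_j$, Lipschitz-extend $\widetilde H^k|_{\partial Q}$ to $\overline H^k\colon Q\to\mathbb R^m$ (e.g.\ via McShane's theorem or a smoothed homogeneous extension centered in $Q$), mollify $g|_Q$ to a Lipschitz map $g_k$, and glue via
\[
\widetilde G^k|_Q=(1-\chi_k)\,\overline H^k+\chi_k\, g_k,
\]
where $\chi_k$ is a smooth cutoff vanishing within a shrinking $\eta_k$-neighborhood of $\partial Q$ and equal to $1$ away from this boundary layer. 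By construction $\widetilde G^k=\widetilde H^k$ on $\partial Q$, so the pieces assemble into a globally Lipschitz map on $\mathcal C_j\cap\mathcal C$.

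The main obstacle is proving the $W^{s,p}$-convergence $\widetilde G^k|_Q\to g|_Q$. The $L^p$ part is immediate, but the Gagliardo double integral contains cross terms coupling the boundary-layer piece $\overline H^k$ and the interior piece $g_k$. The decisive estimate, which drives the whole scheme, is that within the layer of width $\eta_k$ around $\partial Q$ the map $g|_Q$ is close in $W^{s,p}$ to the extension $\overline H^k$ of its boundary trace; this closeness is quantified precisely by the trace-type quantity $\int|g(X^j_\ast)-g(X^{j-1}_\ast)|^p/|X^j_\ast-X^{j-1}_\ast|^{sp}$ appearing in \eqref{e1ad}--\eqref{e1ab}, whose finiteness is encoded in the definition of $\mathcal W^{s,p}_j$. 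Balancing the decay rates of $\eta_k$, of the mollification parameter, and of the Lipschitz constants of $\widetilde H^k$ is the technical heart of the proof; and it is precisely here that the hypothesis $j\le sp$ is indispensable, since only then are these trace-type quantities at our disposal.
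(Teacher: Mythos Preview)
Your reduction step contains a fatal topological error: there is \emph{no} Lipschitz retraction $\Pi_M\colon\R^m\to M$ onto the $\delta$-tubular neighborhood $M$ of $N$, unless $N$ is contractible. Indeed $M$ deformation-retracts onto $N$, so if $N$ has any nontrivial homotopy (e.g.\ $N=\so\subset\R^2$, where $M$ is an annulus), $M$ cannot be a retract of the contractible space $\R^m$. Consequently, constructing $\R^m$-valued Lipschitz approximations $\widetilde G^k$ and post-composing with such a $\Pi_M$ simply cannot produce $M$-valued maps. Your affine corrections and McShane extensions produce maps whose values may be arbitrarily far from $N$, and there is no way to push them back into $M$ by a global projection.

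The paper's proof avoids this obstruction by ensuring \emph{a priori} that every piece of the approximation already takes values in $M$. The mechanism is the Schoen--Uhlenbeck observation (Lemma~\ref{uh2}): since $j\le sp$, on each $j$-cube the space $W^{s,p}$ embeds into $\mathrm{VMO}$, and mollified averages $g\ast\rho_t$ of an $N$-valued map automatically lie within $\delta$ of $N$ for small $t$. This is the true role of the hypothesis $j\le sp$ here---not the trace-type condition \eqref{e1ad}, which the paper uses for a different purpose (the ``filling a hole'' Lemma~\ref{ug1}, replacing $g$ by $g_\mu$ so that near $\partial{\mathfrak C}$ the map equals the homogeneous extension of its own boundary data). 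The paper then glues the $M$-valued mollified interior with the inductively-obtained $N$-valued Lipschitz boundary extension via a cutoff; the transition region is handled by uniform convergence of $g\ast\rho_t$ to the continuous boundary map (continuity following from $j-1<sp$). Your scheme could be repaired along these lines, but as written it misses the key ingredient that keeps the approximations inside $M$.
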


\begin{proof}[Lemma \ref{uf1} implies Lemma \ref{ua1}]
Let $\Pi:M\to N$ denote the nearest point projection. Let $g^k=\Pi (G^k)$. We note that $g=\Pi(g)$, and that $g^k$ is clearly Lipschitz. In order to conclude, it suffices to invoke the continuity of the map
\be*
W^{s,p}({\mathcal C}_j\cap {\cal C} ; M)\ni G\mapsto \Pi (G)\in W^{s,p}({\mathcal C}_j\cap {\cal C} ; N).
\ee* 
This  is standard for maps in smooth domains; see e.g. \cite[Proof of (5.43), p. 56]{bbm1} for a slightly more general continuity result. The argument in \cite{bbm1} adapts readily to maps defined on ${\mathcal C}_j\cap {\cal C}$.
\end{proof}

We next turn our attention to the proof of Lemma \ref{uf1}. Since $\mc$ and $j$ are fixed, we will simplify the notation and omit \enquote{$\mc_j\cap\mc$} in the norms and function spaces. With no loss of generality, we may assume that $\ve=1$. For the convenience of the reader, we start by stating the  main technical ingredients required in the proof of Lemma \ref{uf1}. Before proceeding, let us define \enquote{a cube in ${\cal C}_\ell$} (or \enquote{an $\ell$-dimensional cube in ${\cal C}_\ell$}) by backward induction as follows. A cube in ${\cal C}_n$ is any cube of the mesh ${\cal C}_n$. A cube in ${\cal C}_{n-1}$ is any of the $2^n$ faces of a cube in ${\cal C}_n$. For $\ell\le n-2$, a cube in ${\cal C}_\ell$ is any of the $2^{\ell+1}$ faces of any cube in ${\cal C}_{\ell+1}$. 

Let $g:{\cal C}_j\cap{\cal C}\to\R^m$. 
For $\mathfrak C$ a $j$-dimensional cube in ${\cal C}_j\cap{\cal C}$, we let $0_{\mathfrak C}$ be its center. Clearly, if $X_\ast^j\in {\mathfrak C}$, then the projection $X_\ast^{j-1}$ of $X_\ast^j$ on ${\cal C}_{j-1}\cap{\cal C}$ is 
\be*
\d X_\ast^{j-1}=0_{\mathfrak C}+\frac{X_\ast^j-0_{\mathfrak C}}{|X_\ast^j-0_{\mathfrak C}|}.
\ee*

We now define a convenient approximation $g_\mu$ of $g$. 
For $0<\mu<1$ and $X^j\in {\mathfrak C}$, we set
\be*
g_\mu(X_\ast^j)=\begin{cases}
g(X_\ast^{j-1}),&\text{if }|X_\ast^j-0_{\mathfrak C}|\ge 1-\mu\\
\d g\left(0_{\mathfrak C}+\frac{X_\ast^j-0_{\mathfrak C}}{1-\mu}\right),&\text{if }|X_\ast^j-0_{\mathfrak C}|< 1-\mu
\end{cases}.
\ee*

This definition is inspired by the \enquote{filling a hole} technique of Brezis and Li \cite{brezisli}. See also \cite[Lemma 3.1]{hl} and, in the context of fractional spaces, \cite[Appendix D]{bm}. 

We  have the following result, whose proof is postponed to the end of this section.
\begin{lemm}
\label{ug1}
Let $g\in {\cal W}^{s,p}_j$. Then $g_\mu\to g$ in $W^{s,p}$ as $\mu\to 0$.
\end{lemm}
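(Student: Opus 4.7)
The proof should go cube by cube. Fix a $j$-cube $\mathfrak C\subset{\cal C}_j\cap\mc$ with center $0_{\mathfrak C}$, and set $A_\mu:=\{X\in\mathfrak C\, ;\, |X-0_{\mathfrak C}|<1-\mu\}$, $B_\mu:=\mathfrak C\setminus A_\mu$. By construction $g_\mu$ coincides with $g$ on the lower-dimensional skeleton ${\cal C}_{j-1}\cap\mc$, is the affine rescaling $g\circ\Phi_\mu^{-1}$ on $A_\mu$ (with $\Phi_\mu(X)=0_{\mathfrak C}+(1-\mu)(X-0_{\mathfrak C})$), and equals the radial freezing $g(X^{j-1})$ on the thin annulus $B_\mu$. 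Write $h_\mu:=g_\mu-g$.

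$L^p$-convergence is straightforward: on $A_\mu$, a change of variables identifies $\|h_\mu\|_{L^p(A_\mu)}^p$ with $(1-\mu)^j\|g-g\circ\Phi_\mu\|_{L^p(\mathfrak C)}^p$, which vanishes by $L^p$-continuity of linear contractions. On $B_\mu$, polar coordinates $X=0_{\mathfrak C}+rV$, $r\in(1-\mu,1)$, $V\in\p\mathfrak C$, and the estimate $|X-X^{j-1}|=1-r\le\mu$ yield
\be*
\int_{B_\mu}|h_\mu|^p\, dX\le\mu^{sp}\int_{\mathfrak C}\frac{|g(X)-g(X^{j-1})|^p}{|X-X^{j-1}|^{sp}}\, dX=O(\mu^{sp}),
\ee*
the right-hand side being finite by the good-restriction condition \eqref{e1ad}.

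For the Gagliardo seminorm, decompose $|h_\mu|_{W^{s,p}}^p$ by the pair of sub-regions containing the two variables and by the pair of cubes. The \emph{$A_\mu\times A_\mu$} contribution, after two applications of $\Phi_\mu$, becomes $(1-\mu)^{-sp}$ times the $W^{s,p}(\mathfrak C)$-seminorm of $g-g\circ\Phi_\mu$, which tends to $0$ by continuity of linear dilations in $W^{s,p}$. The \emph{$B_\mu\times B_\mu$} term is split via $|h_\mu(X)-h_\mu(Y)|^p\le C(|g(X)-g(Y)|^p+|g(X^{j-1})-g(Y^{j-1})|^p)$: the first piece vanishes by dominated convergence since $|B_\mu|\to 0$; for the second, polar parametrization $X=0_{\mathfrak C}+rV$, $Y=0_{\mathfrak C}+sW$ factors the integral as
\be*
\int_{\p\mathfrak C\times\p\mathfrak C}|g(V)-g(W)|^p\, K_\mu(V,W)\, dV\, dW,\qquad K_\mu(V,W)=\int_{1-\mu}^1\int_{1-\mu}^1\frac{r^{j-1}s^{j-1}}{|rV-sW|^{j+sp}}\, dr\, ds.
\ee*
A two-regime analysis of $K_\mu$—when $|V-W|\ge 4\mu$, the estimate $|rV-sW|\ge|V-W|/2$ gives $K_\mu\le C\mu^2/|V-W|^{j+sp}$; when $|V-W|<4\mu$, a $2$-dimensional change of variables in the plane spanned by $V-W$ and $V+W$ gives $K_\mu\le C\mu/|V-W|^{j+sp-1}$—combines into the single bound $K_\mu\le C\mu/|V-W|^{(j-1)+sp}$, so the contribution is $O(\mu)\, |g|_{W^{s,p}(\p\mathfrak C)}^p$, finite by \eqref{e1ac}. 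The mixed \emph{$A_\mu\times B_\mu$} and \emph{cross-cube} terms are handled by the same tools: splitting $|h_\mu(X)-h_\mu(Y)|^p\le C(|g_\mu(X)-g_\mu(Y)|^p+|g(X)-g(Y)|^p)$, using $g_\mu=g$ on ${\cal C}_{j-1}$ to reduce the first piece via the triangle inequality through common boundary points to estimates of types already obtained, and using dominated convergence on the second.

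\textbf{Main obstacle.} The decisive estimate is the $B_\mu\times B_\mu$ contribution involving $|g(X^{j-1})-g(Y^{j-1})|$. The radial projection $X\mapsto X^{j-1}$ is \emph{not} Lipschitz for the sup metric—it blows up distances between nearly radial points—so one cannot directly replace $|X-Y|^{j+sp}$ with $|X^{j-1}-Y^{j-1}|^{j+sp}$. The sharp two-regime bound for $K_\mu$ extracts exactly one factor of $\mu$ from the joint effect of the thin-annulus volume and the integrable planar singularity, which matches precisely the power needed for the $(j-1)$-dimensional Gagliardo seminorm of $g$ on $\p\mathfrak C$ (finite by \eqref{e1ac}) to absorb the estimate. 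It is here that the regularity of $g$ on lower-dimensional skeletons, built into the definition of ${\cal W}^{s,p}_j$, enters essentially.
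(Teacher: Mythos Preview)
Your same-cube analysis is on the right track, though the paper organises it more economically: rather than splitting $\mathfrak C\times\mathfrak C$ into $A_\mu\times A_\mu$, $B_\mu\times B_\mu$ and $A_\mu\times B_\mu$, the paper extends $g|_{\mathfrak C}$ to the doubled cube $\mathfrak C^\ast$ by setting $h=g(X^{j-1})$ on $\mathfrak C^\ast\setminus\mathfrak C$, shows $h\in W^{s,p}(\mathfrak C^\ast)$ (this is where the good-restriction condition \eqref{e1ad} enters, via Lemma~\ref{tt2}), and then observes that $g_\mu|_{\mathfrak C}$ is simply the restriction of the dilation $h^{1/(1-\mu)}$. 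Continuity of dilations in $W^{s,p}(\R^j)$ then handles all of $\mathfrak C\times\mathfrak C$ at once, and the somewhat delicate $A_\mu\times B_\mu$ piece you leave to ``same tools'' is absorbed for free. Your kernel estimate for $K_\mu$ is essentially correct and close in spirit to the paper's Lemma~\ref{lema2}, but the two-regime argument as written needs the transversality of $V-W$ and $V+W$ in the sup norm, which is not immediate when $V,W$ lie on different faces.

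The real gap is the cross-cube part. Your proposed split $|h_\mu(X)-h_\mu(Y)|^p\le C(|g_\mu(X)-g_\mu(Y)|^p+|g(X)-g(Y)|^p)$ cannot yield convergence on, say, $A_\mu^{\mathfrak C}\times A_\mu^{\mathfrak C'}$ with $\mathfrak C\neq\mathfrak C'$: as $\mu\to 0$ the second integral \emph{increases} to the full $\mathfrak C\times\mathfrak C'$ integral, not to zero, so dominated convergence gives nothing, and the first integral behaves the same way. You need the cancellation in $h_\mu$, which the split destroys. The paper gets around this with a two-scale argument: one first fixes a small $\mu_0$, disposes of the region where either point is at distance $>\mu_0$ from $\partial\mathfrak C$ by the already-established $L^p$ convergence (Step~2 in the paper), and only then applies your split on the thin region $\mathfrak C_{\mu,\mu_0}\times\mathfrak C'_{\mu,\mu_0}$, where both pieces are small because the region itself is small. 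Even then, controlling the $g_\mu$ piece on $\mathfrak C_{\mu,\mu_0}\times\mathfrak C'_{\mu,\mu_0}$ after the change of variables requires the non-obvious geometric fact (Lemma~\ref{tu4}) that the simultaneous dilations towards $0_{\mathfrak C}$ and $0_{\mathfrak C'}$ form a bi-Lipschitz map on $\mathfrak C\cup\mathfrak C'$, which your ``triangle inequality through common boundary points'' does not supply. The $F\times F$ and $E\times F$ cross-cube pieces similarly require their own geometric lemmas (\ref{tw1} and~\ref{tc2}); this is where the bulk of the paper's work lies, and it is not covered by ``estimates of types already obtained.''
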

[Here, we do not require $j\le sp$.]

Let $\rho\in C^\infty_c(Q)$ (with $Q$ the unit cube in $\R^j$) be a standard mollifier and set 
\be*
\rho_t(x)=\frac 1{t^j}\, \rho(x/t),\  \forall\, t>0,\ \forall\, x\in\R^j. 
\ee*

 Fix some function $\eta\in C^\infty_c([0,1) ; [0,1])$. We let ${\mathfrak C}$ and $0_{\mathfrak C}$ be as above. Given $g:{\cal C}_j\cap{\cal C}\to\R^m$, we define, with a slight abuse of notation and after identifying the $j$-plane containing ${\mathfrak C}$ with $\R^j$, 
\beq
\label{uh1}
g\ast\rho_t(X_\ast^j)=\int_{\mathfrak C} dY_\ast^j\,  g(Y_\ast^j)\rho_t(X_\ast^j-Y_\ast^j)\ \text{ for }X_\ast^j\in {\mathfrak C}\text{ such that }|X_\ast^j-0_{\mathfrak C}|<1-t.
\eeq

We note that  for small $t$ the quantity 
\be*
g^t(X_\ast^j)=\eta(|X_\ast^j-0_{\mathfrak C}|)\, g\ast\rho_t(X_\ast^j)
\ee*
is well-defined in ${\cal C}_j\cap {\cal C}$.
We also let
\be*
g^0(X_\ast^j)=\eta(|X_\ast^j-0_{\mathfrak C}|)\, g(X_\ast^j).
\ee*

We now state a standard result  on the approximation by smoothing in fractional Sobolev spaces, whose straightforward proof is left to the reader. 
\begin{lemm}
\label{ug2}
Let $g\in W^{s,p}$.
Then $g^t\to g^0$ in $W^{s,p}$ as $t\to 0$.
\end{lemm}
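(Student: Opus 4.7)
\medskip\noindent
\textbf{Plan for the proof of Lemma \ref{ug2}.} The plan is to exploit the fact that $\eta\in C^\infty_c([0,1);[0,1])$ vanishes in a neighborhood of $1$, so the multiplier $X_\ast^j\mapsto \eta(|X_\ast^j-0_{\mathfrak C}|)$ vanishes in a neighborhood of $\partial\mathfrak C$ for every $j$-dimensional cube $\mathfrak C$ of ${\cal C}_j\cap\mc$. This reduces the whole question to standard interior mollifier estimates, and in addition separates the cross-terms between distinct cubes.

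First I would fix $a\in (0,1)$ with $\supp\eta\subset [0,a]$ and, from now on, restrict to $0<t<(1-a)/2$. For such $t$, the convolution in \eqref{uh1} is well-defined on the whole set $\{X_\ast^j\in\mathfrak C\ ;\ |X_\ast^j-0_{\mathfrak C}|<(1+a)/2\}$, which strictly contains the support of $\eta(|\cdot-0_{\mathfrak C}|)$; on $\mathfrak C$, we have $g^t-g^0=\eta(|\cdot-0_{\mathfrak C}|)(g\ast\rho_t-g)$ on that set, and $g^t-g^0\equiv 0$ elsewhere. Convergence $\|g^t-g^0\|_{L^p}\to 0$ is then immediate by summing over the finitely many $\mathfrak C\subset\mc$ and applying the standard $L^p$ convergence of mollifications on any set strictly inside $\mathfrak C$.

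Second, I would split the Gagliardo double integral defining $|g^t-g^0|_{W^{s,p}}^p$ into the diagonal part where $X_\ast^j,Y_\ast^j$ lie in the same $\mathfrak C$, and the cross part where they lie in distinct $j$-cubes $\mathfrak C_1,\mathfrak C_2$. For the diagonal part, I would identify $\mathfrak C$ with the unit cube $Q\subset\R^j$ and use that $g\ast\rho_t\to g$ in $W^{s,p}$ on $\{|X_\ast^j-0_{\mathfrak C}|<(1+a)/2\}$ (a classical mollifier fact in fractional Sobolev spaces), combined with the fact that multiplication by the bounded Lipschitz function $\eta(|\cdot-0_{\mathfrak C}|)$ is continuous on $W^{s,p}$; this gives $\eta(|\cdot-0_{\mathfrak C}|)(g\ast\rho_t)\to\eta(|\cdot-0_{\mathfrak C}|)g$ in $W^{s,p}(\mathfrak C)$.

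For the cross part, observe that $(g^t-g^0)(X_\ast^j)=0$ unless $X_\ast^j$ lies in the compact set $K_{\mathfrak C}:=\{X_\ast^j\in\mathfrak C\ ;\ |X_\ast^j-0_{\mathfrak C}|\le a\}$, and for $\mathfrak C_1\neq \mathfrak C_2$ one has $\dist(K_{\mathfrak C_1},K_{\mathfrak C_2})\ge c(a)>0$. Hence
\be*
\iint_{\mathfrak C_1\times\mathfrak C_2}\frac{|(g^t-g^0)(X_\ast^j)-(g^t-g^0)(Y_\ast^j)|^p}{|X_\ast^j-Y_\ast^j|^{j+sp}}\,dX_\ast^j\, dY_\ast^j\le C(a,\mc)\,\|g^t-g^0\|_{L^p}^p,
\ee*
which tends to $0$ by the first step. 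Summing over the finite collection of pairs of cubes completes the proof. There is no real obstacle here; the whole point is that the cut-off $\eta(|\cdot-0_{\mathfrak C}|)$ absorbs the only potentially problematic region (near $\partial\mathfrak C$), which is why the lemma is essentially standard.
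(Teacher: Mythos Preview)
Your proposal is correct. The paper does not supply a proof of this lemma---it calls it ``a standard result on the approximation by smoothing in fractional Sobolev spaces, whose straightforward proof is left to the reader''---so there is no argument to compare against; your outline is exactly the kind of routine verification the authors have in mind, and the key observation (that $\supp\eta\subset[0,a]$ forces $g^t-g^0$ to be supported in compact sets $K_{\mathfrak C}\subset\mathfrak C$ that are uniformly separated across distinct cubes, killing the cross terms) is precisely what makes the statement standard.
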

[Here, we do not require $j\le sp$.]

\smallskip
We next present another  auxiliary result, which is a rather easy consequence of Lemma \ref{tw1} (which is fully proved below) and whose proof (granted Lemma \ref{tw1}) is left to the reader. Given $f:{\cal C}_{j-1}\cap{\cal C}$, we consider its homogeneous extension $g$ to ${\cal C}_{j}\cap{\cal C}$. Let $\eta$ be as above. We assume in addition that $\eta=1$ near the origin. This implies that the map 
\be*
{\cal C}_{j}\cap{\cal C}\ni X_\ast^j\mapsto h(X_\ast^j)=\left(1-\eta(|X_\ast^j-0_{\mathfrak C}|)\right)g(X_\ast^j)
\ee*
is well-defined in each point.
\begin{lemm}
\label{ug3}
The mapping $f\mapsto h$ is continuous from $W^{s,p}({\cal C}_{j-1}\cap{\cal C})$ into $W^{s,p}({\cal C}_{j}\cap{\cal C})$. 
\end{lemm}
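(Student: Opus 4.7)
The plan is to exploit linearity of $f\mapsto h$ and prove the norm inequality $\|h\|_{W^{s,p}(\mc_j\cap\mc)}\le C\|f\|_{W^{s,p}(\mc_{j-1}\cap\mc)}$. The crucial point is that, since $\eta\equiv 1$ in some neighborhood of $0$, there exists $r_0>0$ such that $1-\eta\equiv 0$ on $[0,r_0]$. Consequently, for each $j$-cube $\mathfrak C\subset\mc_j\cap\mc$, the function $h$ vanishes on the small cube $\{X_\ast^j\in\mathfrak C: |X_\ast^j-0_{\mathfrak C}|\le r_0\}$ and is supported on the annular shell $A_{\mathfrak C}=\{X_\ast^j\in\mathfrak C:|X_\ast^j-0_{\mathfrak C}|\ge r_0\}$. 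On $A_{\mathfrak C}$, with polar coordinates $X_\ast^j=0_{\mathfrak C}+r\omega$ ($|\omega|=1$, so $0_{\mathfrak C}+\omega\in\p\mathfrak C$), the homogeneous extension satisfies $g(X_\ast^j)=f(0_{\mathfrak C}+\omega)$, and thus $h(X_\ast^j)=(1-\eta(r))f(0_{\mathfrak C}+\omega)$ exhibits a clean product structure for which the radial factor is smooth and bounded away from $0$.

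I would then split $\|h\|_{W^{s,p}}^p$ into the $L^p$ piece and the Gagliardo semi-norm. For the $L^p$ piece, polar integration on each $A_{\mathfrak C}$ yields $\|h\|_{L^p(\mathfrak C)}\le C\|f\|_{L^p(\p\mathfrak C)}$, which sums to the desired control. For the Gagliardo semi-norm I would split the double integral in \eqref{fgz1} into (I) contributions from pairs $(X_\ast^j,Y_\ast^j)$ in the same $j$-cube, and (II) pairs lying in distinct $j$-cubes. Part (II) is handled by observing that $h$ extends continuously across shared $(j-1)$-faces (since at $r=1$ the cutoff $1-\eta$ equals $1$, so $h_{|\p\mathfrak C}=g_{|\p\mathfrak C}=f$), so the cross-cube contributions reduce, by standard gluing estimates (and the fact that distances are bounded below unless the cubes share a face), to the single-cube estimates.

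The main work is in (I). On a single cube one writes
\[
h(X_\ast^j)-h(Y_\ast^j)=(1-\eta(r))\bigl[f(0_{\mathfrak C}+\omega)-f(0_{\mathfrak C}+\sigma)\bigr]+\bigl[\eta(\rho)-\eta(r)\bigr]f(0_{\mathfrak C}+\sigma),
\]
with $Y_\ast^j=0_{\mathfrak C}+\rho\sigma$. The first piece is the homogeneous extension of the difference and is controlled, after passing to polar coordinates on $A_{\mathfrak C}\times A_{\mathfrak C}$ and invoking Lemma \ref{tw1} (which furnishes precisely the $W^{s,p}$ bound on the homogeneous extension away from the singular center), by $C|f|_{W^{s,p}(\p\mathfrak C)}^p$. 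The second piece, thanks to smoothness of $\eta$, gains a factor $|r-\rho|$; since $r,\rho\ge r_0$ the kernel $|r-\rho|^p/|X_\ast^j-Y_\ast^j|^{j+sp}$ is integrable on $A_{\mathfrak C}\times A_{\mathfrak C}$ (the singularity of the Gagliardo kernel is absorbed by the extra $|r-\rho|$ factor in a standard way), so this piece is bounded by $C\|f\|_{L^p(\p\mathfrak C)}^p$.

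The main obstacle I anticipate is simply the careful bookkeeping of the polar change of variables and verifying that Lemma \ref{tw1} applies in precisely the form needed for the first piece above; once it does, summing over the finitely many cubes in $\mc$ concludes. Since $\eta$ is fixed and smooth, all implicit constants depend only on $\eta,n,s,p,j,\mc$, which is harmless.
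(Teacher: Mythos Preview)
Your decomposition is the right one and matches the paper's intended route (the paper only says the lemma is ``a rather easy consequence of Lemma~\ref{tw1}'' and leaves the details to the reader). However, you have the role of Lemma~\ref{tw1} reversed, and this creates a real gap.

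Lemma~\ref{tw1} is stated and proved \emph{only} for pairs of \emph{distinct} cubes ${\mathfrak C}\neq{\mathfrak C}'$; it is precisely the tool for your part~(II), the cross-cube contributions. Your proposed treatment of (II) via ``standard gluing estimates'' does not work: two $j$-cubes in ${\cal C}_j\cap{\cal C}$ may share a face of any dimension $0\le\ell\le j-1$, and when $\ell<j-1$ the union $\overline{\mathfrak C}\cup\overline{{\mathfrak C}'}$ is not bi-Lipschitz equivalent to a nice domain, distances between points of $A_{\mathfrak C}$ and $A_{{\mathfrak C}'}$ are \emph{not} bounded below, and no ``trace matching'' argument is available. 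This is exactly the difficulty that the geometric estimates \eqref{tw3}, \eqref{tw5} and Lemma~\ref{tz1} in the proof of Lemma~\ref{tw1} are designed to overcome. So for (II) you should invoke Lemma~\ref{tw1} (with $1/2$ replaced by $1-r_0$; the proof there works verbatim), together with your second-piece estimate for the $[\eta(\rho)-\eta(r)]$ term, which goes through across cubes as well since $|r-\rho|\le|X^j_\ast-Y^j_\ast|$.

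Conversely, for the single-cube piece~(I) you do not need Lemma~\ref{tw1}. The estimate
\[
\int_{r_0}^1\!\int_{r_0}^1\frac{r^{j-1}\rho^{j-1}}{|r\omega-\rho\sigma|^{j+sp}}\,dr\,d\rho\le \frac{C(r_0)}{|\omega-\sigma|^{j-1+sp}},\qquad |\omega|=|\sigma|=1,
\]
follows directly from the computation in the proof of Lemma~\ref{lema2} (see \eqref{a13}); since the radii are bounded away from $0$ no condition like $sp<j$ is needed, consistent with the remark that Lemma~\ref{ug3} does not require $j\le sp$. Your treatment of the second piece on a single cube is correct: $|\eta(r)-\eta(\rho)|\le C|X^j_\ast-Y^j_\ast|$ and $s<1$ make the resulting kernel integrable.
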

[Here, we do not require $j\le sp$.]

The final auxiliary result is deeper, and was essentially observed by Schoen and Uhlenbeck \cite{su}. For the fractional version we present below, see \cite[Example 2, p. 210, and eqn (7), p. 206]{bn}. The argument in \cite{bn} (where maps are defined in domains) adapts readily to the case of maps defined on skeletons.
\begin{lemm}
\label{uh2}
Let $0<s<1$ and $1\le p<\infty$ be such that $sp<n$. Let $j\in\N$ be such that $1\le j\le sp$.
Let $g\in W^{s,p}({\cal C}_j\cap{\cal C} ; N)$.  Let $0<t<1$ and let $\delta>0$ be arbitrarily small (but fixed). Let $M$ be as in \eqref{uh3} and $g\ast\rho_t$ be as in \eqref{uh1}. Then, for sufficiently small $t$, we have 
\beq
\label{uh5}
g\ast\rho_t(X_\ast^j)\in M,\ \forall\, X_\ast^j\in{\cal C}_j\cap {\cal C}\text{ such that }\dist\, (X_\ast^j, {\cal C}_{j-1}\cap{\cal C})>t.
\eeq
\end{lemm}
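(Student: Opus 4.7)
The plan is to reduce to a local problem on a single $j$-dimensional face of $\mathcal{C}_j\cap\mathcal{C}$ and then invoke the Schoen--Uhlenbeck / Brezis--Nirenberg mechanism for mollification of Sobolev maps with values in a compact manifold. Since $\mathcal{C}$ is finite, it suffices to fix one $j$-dimensional cube $\mathfrak{C}$ and prove that
\begin{equation*}
\sup\{\dist(g\ast\rho_t(X^j_\ast),N)\ ;\ X^j_\ast\in\mathfrak{C},\ \dist(X^j_\ast,\partial\mathfrak{C})>t\}\longrightarrow 0\ \text{as}\ t\to 0.
\end{equation*}
After identifying the affine $j$-plane containing $\mathfrak{C}$ with $\mathbb{R}^j$, the mollification is the standard one, and the restriction $g_{|\mathfrak{C}}$ belongs to $W^{s,p}(\mathfrak{C};N)$ with $sp\ge j=\dim\mathfrak{C}$.

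The heart of the argument is the pointwise bound coming from $g(Y)\in N$ a.e.: for $X^j_\ast$ with $B_t(X^j_\ast)\subset\mathfrak{C}$,
\begin{equation*}
\dist(g\ast\rho_t(X^j_\ast),N)^p\le \fint_{B_t(X^j_\ast)}|g\ast\rho_t(X^j_\ast)-g(Y)|^p\, dY.
\end{equation*}
Since $\rho_t\le C/t^j$, two applications of Jensen give, after multiplying and dividing by $|Z-Y|^{j+sp}$,
\begin{equation*}
\fint_{B_t(X^j_\ast)}|g\ast\rho_t(X^j_\ast)-g(Y)|^p\, dY\le C\, t^{sp-j}\, |g|_{W^{s,p}(B_t(X^j_\ast))}^p.
\end{equation*}

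When $sp>j$, we are done: $t^{sp-j}\to 0$ while $|g|_{W^{s,p}(B_t(X^j_\ast))}^p\le |g|_{W^{s,p}(\mathfrak{C})}^p$ uniformly in $X^j_\ast$, so the right-hand side tends to $0$ uniformly and $g\ast\rho_t(X^j_\ast)\in M$ for $t$ small. The main (and only) obstacle is the limiting case $sp=j$: there $t^{sp-j}=1$ and the remaining quantity $|g|_{W^{s,p}(B_t(X^j_\ast))}^p$ is only pointwise small by absolute continuity of the integral, not uniformly. To overcome this, we invoke the Brezis--Nirenberg embedding $W^{s,p}(\mathfrak{C})\hookrightarrow\text{VMO}(\mathfrak{C})$ available precisely when $sp=j$ \cite{bn}. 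A VMO map whose essential range lies in the compact set $N$ has the property that its mollifications stay uniformly close to $N$ (this is exactly Example 2, p.~210 and equation (7), p.~206 of \cite{bn}): the mean oscillation over $B_t(X^j_\ast)$ tends to $0$ uniformly in $X^j_\ast$, hence so does $\dist(g\ast\rho_t(X^j_\ast),N)$. Combining the two cases and summing over the finitely many cubes $\mathfrak{C}$ yields \eqref{uh5}.
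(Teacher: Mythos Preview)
Your proposal is correct and follows exactly the route the paper indicates: the paper does not give a detailed proof but simply refers to Schoen--Uhlenbeck \cite{su} and to \cite[Example 2, p.~210, and eqn (7), p.~206]{bn}, remarking that the argument there adapts from domains to skeletons. Your reduction to a single $j$-cube, the pointwise Jensen estimate yielding $\dist(g\ast\rho_t,N)^p\le C\,t^{sp-j}|g|_{W^{s,p}(B_t)}^p$, and the splitting into the case $sp>j$ (power decay) versus $sp=j$ (VMO embedding from \cite{bn}) is precisely the adaptation the paper has in mind.
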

[Here, we do require $j\le sp$.]

\begin{proof}[Proof of Lemma \ref{uf1} using Lemmas \ref{ug1}--\ref{uh2}] The proof relies on two ingredients: approximation of maps as in Lemma \ref{ug1} and induction on $j$. 

\medskip
\noindent
{\bf Step 1.} Proof of the lemma for $j=1$\\
By Lemma \ref{ug1}, it suffices to prove the lemma when $g$ is replaced by $g_\mu$. Since $j=1$ and thus ${\cal C}_0\cap{\cal C}$ is a finite collection of points, this simply means that we may assume that $g$ is constant near each point in ${\cal C}_0\cap{\cal C}$: there exists some $\mu>0$ such that 
\beq
\label{uj1}
g(X_\ast^1)=g(X_\ast^0)\text{ if  }\dist\, (X_\ast^1, {\cal C}_0\cap{\cal C})\le \mu. 
\eeq

Let now $\eta\in C^\infty ([0,1] ; [0,1])$ be such that 
\be*
\eta(x)=\begin{cases}
1,&\text{if }0\le x\le 1-\mu/2\\
0,&\text{if }x>1-\mu/3
\end{cases}.
\ee*

When $0<t<\mu/3$, the map 
\be*
X_\ast^1\mapsto G^t(X_\ast^1)=\eta(|X_\ast^1-0_{\mathfrak C}|)\, g\ast\rho_t(X_\ast^1)+\left( 1-\eta(|X_\ast^1-0_{\mathfrak C}|) \right)\, g(X_\ast^0)
\ee*
is well-defined everywhere, and is clearly Lipschitz. Moreover, by Lemma \ref{ug2} and the choice of $\eta$, we have 
\be*
G^t\to g\text{ in }W^{s,p}\text{ as }t\to 0.
\ee*

It remains to prove that, for small $t$, we have 
\beq
\label{ui1}
G^t(X_\ast^1)\in M,\ \forall\, X_\ast^1\in {\cal C}_1\cap{\cal C}.
\eeq

By Lemma \ref{uh2}, property \eqref{ui1} holds when $|X_\ast^1-0_{\mathfrak C}|\le 1-\mu/2$. Clearly,  \eqref{ui1} holds also when $|X_\ast^1-0_{\mathfrak C}|\ge 1-\mu/3$. Finally, when $1-\mu/2<|X_\ast^1-0_{\mathfrak C}|< 1-\mu/3$ and $t<\mu/3$, we have
\be*
G^t(X_\ast^1)=g\ast\rho_t(X_\ast^1)=g(X_\ast^0)\in N.
\ee*

\medskip
\noindent
{\bf Step 2.} Proof of the lemma for $j\ge 2$\\
Let  $f$ be the restriction of $g$ to ${\cal C}_{j-1}\cap{\cal C}$. By Lemma \ref{ug1}, we may assume that there exists some $\mu\in (0,1)$ such that 
\beq
\label{uj3}
g(X_\ast^j)=f(X_\ast^{j-1}),\ \forall\, {\mathfrak C}\subset {\cal C}_j\cap{\cal C}, \forall\, X^j_\ast\in {\mathfrak C}\text{ such that }|X_\ast^j-0_{\mathfrak C}|>1-\mu.
\eeq

We argue by induction on $j$. By the induction hypothesis and the reduction of Lemma \ref{ua1} to Lemma \ref{uf1},  the map $f$  (which clearly belongs to ${\cal W}^{s,p}_{j-1}({\cal C}_{j-1}\cap{\cal C} ; N)$) is the limit in $W^{s,p}$ of a sequence $\{ F^k\}\subset\Lip ({\cal C}_{j-1}\cap{\cal C} ; N)$. With $\eta$ as in Step 1 and $0<t<\mu/3$, we define
 the Lipschitz maps 
\be*
X_\ast^j\mapsto G^{k,t}(X_\ast^j)=\eta(|X_\ast^j-0_{\mathfrak C}|)\, g\ast\rho_t(X_\ast^j)+\left( 1-\eta(|X_\ast^j-0_{\mathfrak C}|) \right)\, F^k(X_\ast^{j-1}).
\ee*

By Lemmas \ref{ug2} and \ref{ug3}, we have 
\be*
\lim_{k\to\infty}\lim_{t\searrow 0}G^{k,t}=g\text{ in }W^{s,p}.
\ee*

In order to complete Step 2 it remains to prove that, for large $k$ and sufficiently small $t$ (possibly depending on $k$) we have 
\beq
\label{uk1}
G^{k, t}(X_\ast^j)\in M,\ \forall\, X_\ast^j\in {\cal C}_j\cap{\cal C}.
\eeq

As in Step 1, \eqref{uk1} holds when $|X_\ast^j-0_{\mathfrak C}|\le 1-\mu/2$ or $|X_\ast^j-0_{\mathfrak C}|\ge 1-\mu/3$. When $1-\mu/2<|X_\ast^j-0_{\mathfrak C}|< 1-\mu/3$, we argue as follows. Since $j\le sp$, we have $j-1<sp$. By the Sobolev embeddings, $f$ and $F^k$ are continuous and we have $F^k\to f$ uniformly. Let $k_0$ be such that 
\beq
\label{ul1}
\|F^k-f\|_{L^\infty}\le \delta/2,\ \forall\, k\ge k_0.
\eeq 

By \eqref{uj3} and the continuity of $f$, for every fixed $k$ we have
\beq
\label{um1}
\begin{aligned}
&\lim_{t\searrow 0}G^{k,t}(X_\ast^j)=\eta(|X_\ast^j-0_{\mathfrak C}|)\, f(X_\ast^{j-1})+\left( 1-\eta(|X_\ast^j-0_{\mathfrak C}|) \right)\, F^k(X_\ast^{j-1})\\
&\text{ uniformly in the set }\bigcup_{{\mathfrak C}\subset {\cal C}_j\cap{\cal C}}\{ X_\ast^j\in{\mathfrak C};\, 1-\mu/2<|X_\ast^j-0_{\mathfrak C}|< 1-\mu/3\}.
\end{aligned}
\eeq

We complete the proof of \eqref{uk1} using \eqref{ul1} and \eqref{um1}.
\end{proof}

In order to complete the proof of Lemma \ref{uf1}, it remains to proceed to the 
\begin{proof}[Proof of Lemma \ref{ug1}]
We may assume that $\ve=1$ and that $T=0$. We set 
\beq
\label{tu9}
\Omega={\cal C}_j\cap{\cal C},\ E=E_\mu=\{ X^j_\ast\in\Omega;\, |X^j_\ast-X^{j-1}_\ast|>\mu\}, \ F=F_\mu=\{ X^j_\ast\in\Omega;\, |X^j_\ast-X^{j-1}_\ast|<\mu\}.
\eeq

If ${\mathfrak C}$ is a cube in ${\cal C}_j$ and $0<\mu<\mu_0<1$, then we define
\beq
\label{tu2}
{\mathfrak C}_\mu=\{ X^j_\ast\in {\mathfrak C};\, |X^j_\ast-X^{j-1}_\ast|>\mu\},\  {\mathfrak C}_{\mu, \mu_0}=\{ X^j_\ast\in {\mathfrak C};\, \mu<|X^j_\ast-X^{j-1}_\ast|<\mu_0\},\ {\mathfrak C}^c_\mu={\mathfrak C}\setminus {\mathfrak C}_\mu.
\eeq

If ${\mathfrak C}'$ is another cube in ${\cal C}_j$, we define similarly ${\mathfrak C}'_{\mu}$, etc.

We clearly have 
\beq
\label{ha10}
g_\mu\to g\text{ in }L^p\text{ as }\mu\to 0.
\eeq
[For a more general property, see \eqref{tu1} below.]

 It thus remains to prove that 
\beq
\label{ha1}
I=\iint\limits_{\Omega\times\Omega} dX^j_\ast dY^j_\ast\, \frac{|[g_\mu(X^j_\ast)-g(X^j_\ast)]-[g_\mu(Y^j_\ast)-g(Y^j_\ast)]|^p}{|X^j_\ast-Y^j_\ast|^{j+sp}}\to 0\text{ as }\mu\to 0.
\eeq

We split 
\be*
I=I_{E,\, E}+2I_{E,\, F}+I_{F,\, F},\ \text{where }I_{A,\, B}=I_{A,\, B,\, \mu}=\iint\limits_{A\times B}\cdots
\ee*

We have to prove that $I_{E,\, E}\to 0$, $I_{E,\, F}\to 0$ and $I_{F,\, F}\to 0$ as $\mu\to 0$.

\medskip
\noindent
{\bf Step 1.} For every cube ${\mathfrak C}$ in ${\cal C}_j\cap{\cal C}$ we have $I_{{\mathfrak C},\, {\mathfrak C}}\to 0$ as $\mu\to 0$\\
Indeed, we may assume that ${\mathfrak C}$ is open, and then we identify ${\mathfrak C}$ with the unit cube $Q_1\subset\R^j$. We let ${\mathfrak C}^*=Q_2$ denote the double of ${\mathfrak C}$, and set 
\be*
h(X^j_\ast)=\begin{cases}
g(X^j_\ast),&\text{if }X^j_\ast\in {\mathfrak C}\\
g(X^{j-1}_\ast),&\text{if }X^j_\ast\in {\mathfrak C}^\ast\setminus {\mathfrak C}
\end{cases}.
\ee*

\begin{lemm}
\label{tt1}
We have $h\in W^{s,p}({\mathfrak C}^\ast)$.
\end{lemm}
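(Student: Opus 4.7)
The strategy is to estimate $\|h\|_{W^{s,p}(\mathfrak{C}^\ast)}^p$ by splitting $\mathfrak{C}^\ast = \mathfrak{C} \cup B$ with $B := \mathfrak{C}^\ast\setminus\mathfrak{C}$, using three properties of $g\in\mathcal{W}^{s,p}_j$: (a) $g|_{\mathfrak{C}}\in W^{s,p}(\mathfrak{C})$; (b) $g|_{\partial\mathfrak{C}}\in W^{s,p}(\partial\mathfrak{C})$, inherited from $g|_{\mathcal{C}_{j-1}\cap\mathcal{C}}\in W^{s,p}$; and (c) the trace-type control \eqref{e1ad}. I will systematically use the cubic polar coordinates of Section~\ref{appb} ($X^j_\ast = 0_\mathfrak{C} + t\omega$, $\omega\in\partial Q_1$) to reduce kernel integrals to one-variable estimates in the style of Lemma~\ref{lema2}.

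The $L^p$-contribution on $\mathfrak{C}$ is $\|g\|_{L^p(\mathfrak{C})}^p$, and on $B$ the parametrization $X^j_\ast = 0_\mathfrak{C} + t\omega$ with $t\in(1,2)$ gives $\int_B |g(X^{j-1}_\ast)|^p\,dX^j_\ast \le C\|g\|_{L^p(\partial\mathfrak{C})}^p < \infty$. Split the Gagliardo seminorm as $\iint_{\mathfrak{C}^\ast\times\mathfrak{C}^\ast} = \iint_{\mathfrak{C}\times\mathfrak{C}} + 2\iint_{\mathfrak{C}\times B} + \iint_{B\times B}$: the inner-inner piece equals $|g|_{W^{s,p}(\mathfrak{C})}^p$, and for the outer-outer piece, polar coordinates on both sides ($Y^j_\ast = 0_\mathfrak{C} + u\sigma$, $t,u\in(1,2)$) yield $\iint_{\partial Q_1\times\partial Q_1} |g(0_\mathfrak{C}+\omega)-g(0_\mathfrak{C}+\sigma)|^p K(\omega,\sigma)\,d\omega\,d\sigma$ with $K(\omega,\sigma) = \int_1^2\int_1^2 t^{j-1}u^{j-1}|t\omega - u\sigma|^{-(j+sp)}\,dt\,du$. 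Following Lemma~\ref{lema2} (by symmetry WLOG $t\le u$; substitute $t = u\tau$; perform the bounded $u$-integration; apply the one-variable inequality \eqref{a13}) yields $K(\omega,\sigma)\lesssim|\omega-\sigma|^{-(j-1+sp)}$, whence this piece is bounded by $C|g|_{W^{s,p}(\partial\mathfrak{C})}^p < \infty$.

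For the cross piece, apply the triangle inequality $|g(X^j_\ast)-g(Y^{j-1}_\ast)|^p\lesssim|g(X^j_\ast)-g(X^{j-1}_\ast)|^p+|g(X^{j-1}_\ast)-g(Y^{j-1}_\ast)|^p$. For the first summand, for fixed $X^j_\ast\in\mathfrak{C}$ the inner integral obeys $\int_B |X^j_\ast-Y^j_\ast|^{-(j+sp)}\,dY^j_\ast \lesssim |X^j_\ast-X^{j-1}_\ast|^{-sp}$ (the distance from $X^j_\ast$ to $\overline{B}$ being $|X^j_\ast-X^{j-1}_\ast|$), and the summand then reduces exactly to the integral in \eqref{e1ad}, which is finite. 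For the second summand, polar decomposition with $t\in(0,1)$ for $X^j_\ast$ and $u\in(1,2)$ for $Y^j_\ast$ leaves the kernel $K'(\omega,\sigma) = \int_0^1\int_1^2 t^{j-1}u^{j-1}|t\omega - u\sigma|^{-(j+sp)}\,dt\,du$; since $t<1<u$, the substitution $t=u\tau$ and \eqref{a13} again give $K'\lesssim|\omega-\sigma|^{-(j-1+sp)}$, so this summand is also bounded by $C|g|_{W^{s,p}(\partial\mathfrak{C})}^p$. The main obstacle is precisely this kernel estimate in the present regime $j\le sp$: Lemma~\ref{lema2} applied in the $j$-dimensional ambient would demand $sp<j$, which fails. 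The remedy is the observation that the argument only invokes the one-variable bound \eqref{a13}, whose proof needs only $n+sp>1$; the restriction $sp<n$ in Lemma~\ref{lema2} arises solely from the factor $\int_0^1 t^{n-sp-1}\,dt$, while our radial integrations $\int_1^2 t^{j-1-sp}\,dt$ and $\int_0^1 t^{j-1}\,dt$ are unconditionally finite.
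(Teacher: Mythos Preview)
Your argument is correct and gives a self-contained proof, but it proceeds by a genuinely different route than the paper's. The paper does not compute the Gagliardo seminorm of $h$ globally; instead it observes that $h\in W^{s,p}(\mathfrak{C})$ and $h\in W^{s,p}(\mathfrak{C}^\ast\setminus\overline{\mathfrak{C}})$ separately, and then argues that it suffices to show $h\in W^{s,p}$ in a neighbourhood of each point of $\partial\mathfrak{C}$. After a bi-Lipschitz flattening of the boundary, this localized statement reduces to Lemma~\ref{tt2} (quoted from \cite[Appendix B]{bm}), which handles precisely the situation of a function equal to $u$ on one side of a hyperplane and to a ``trace-compatible'' function $v$ on the other side.

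What each approach buys: the paper's proof is shorter once Lemma~\ref{tt2} is granted, and it isolates the one non-obvious mechanism (gluing across $\partial\mathfrak{C}$ using condition \eqref{e1ad}) into a clean black box. Your approach avoids the external reference and the bi-Lipschitz localization entirely, replacing them by explicit kernel estimates on $\mathfrak{C}^\ast$ in the spirit of Section~\ref{appa}. Your observation that \eqref{a13} only needs $j+sp>1$ (and not $sp<j$, which is false here) is the key point making this direct computation go through; it shows that the homogeneous extension from $\partial\mathfrak{C}$ outward to $B$ is $W^{s,p}$-bounded even in the regime $j\le sp$, because the problematic radial factor $\int_0^1 t^{j-sp-1}\,dt$ from Lemma~\ref{lema2} is replaced by the harmless $\int_1^2 u^{j-1-sp}\,du$.
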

\begin{proof}[Proof of Lemma \ref{tt1}]
Clearly, since $g\in {\cal W}^{s,p}_j$, we have $h\in W^{s,p}({\mathfrak C})$ and $h\in W^{s,p}({\mathfrak C}^\ast\setminus \overline {\mathfrak C})$. It thus suffices to prove that $h\in W^{s,p}$ near each point of $\p {\mathfrak C}$. After a bi-Lipschitz change of variables, and taking the definition of ${\cal W}^{s,p}_j$ into account, we are then reduced to the following lemma, established in \cite[Appendix B, Lemma B.1]{bm}.
\end{proof}
\begin{lemm}
\label{tt2}
Let $0<s<1$, $1\le p<\infty$, $u\in W^{s,p}((0,1)^j)$ and $v\in W^{s,p}((0,1)^{j-1})$ be such that
\be*
\int\limits_{(0,1)^j}dX_1\ldots dX_j\, \frac{|u(X_1,\ldots, X_j)-v(X_1,\ldots, X_{j-1})|^p}{X_j^{sp}}<\infty.
\ee* 
Define
\be*
w(X_1,\ldots, X_j)=\begin{cases}
u(X_1,\ldots, X_j),&\text{if }(X_1,\ldots, X_j)\in (0,1)^j\\
v(X_1,\ldots, X_{j-1}),&\text{if }(X_1,\ldots, X_j)\in (0,1)^{j-1}\times (-1, 0]
\end{cases}.
\ee*

Then $w\in W^{s,p}((0,1)^{j-1}\times (-1,1))$.
\end{lemm}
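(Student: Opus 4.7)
The plan is to reduce the claim to verifying that the Gagliardo seminorm of $w$ on $\widetilde\Omega:=(0,1)^{j-1}\times(-1,1)$ is finite, since the $L^p$ bound is immediate from $u\in L^p$ and $v\in L^p$. Set $\Omega^+=(0,1)^j$ and $\Omega^-=(0,1)^{j-1}\times(-1,0)$, and split
\be*
|w|_{W^{s,p}(\widetilde\Omega)}^p=\iint_{\Omega^+\times\Omega^+}\!\!+\iint_{\Omega^-\times\Omega^-}\!\!+\,2\iint_{\Omega^+\times\Omega^-}\frac{|w(X)-w(Y)|^p}{|X-Y|^{j+sp}}\,dXdY.
\ee*
The first piece equals $|u|_{W^{s,p}(\Omega^+)}^p$, so it is finite.

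For the second piece I would use that $w\equiv v(X')$ on $\Omega^-$ (writing $X=(X',X_j)$) and carry out the $(X_j,Y_j)$ integration first. Using $\int_{-\infty}^\infty(r^2+t^2)^{-(j+sp)/2}dt=Cr^{1-j-sp}$ with $r=|X'-Y'|$, the piece is bounded by $C|v|_{W^{s,p}((0,1)^{j-1})}^p<\infty$.

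The main work is the cross term
\be*
J=\iint_{\Omega^+\times\Omega^-}\frac{|u(X)-v(Y')|^p}{|X-Y|^{j+sp}}\,dXdY.
\ee*
The natural device is the triangle inequality $|u(X)-v(Y')|^p\le C\bigl(|u(X)-v(X')|^p+|v(X')-v(Y')|^p\bigr)$, giving $J\le C(J_1+J_2)$. For $J_1$ I would fix $X\in\Omega^+$ and integrate first in $Y$: since $X_j>0>Y_j$ forces $|X_j-Y_j|\ge X_j$, extending $Y'$ over $\R^{j-1}$ yields
\be*
\int_{\R^{j-1}}\int_{-1}^{0}\frac{dY'dY_j}{(|X'-Y'|^2+(X_j-Y_j)^2)^{(j+sp)/2}}\le C\int_{0}^{\infty}\frac{d\tau}{(X_j+\tau)^{1+sp}}=CX_j^{-sp},
\ee*
so $J_1\le C\int_{\Omega^+}|u(X)-v(X')|^p X_j^{-sp}dX<\infty$ by the hypothesis of the lemma. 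For $J_2$ the integrand depends only on $X'$ and $Y'$, and the same one-dimensional trick (integrating out $X_j,Y_j$ over a set of measure at most $1$ per fiber) reduces it to $C|v|_{W^{s,p}((0,1)^{j-1})}^p$.

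The main obstacle is the estimate of $J_1$: this is the only place where the hypothesis $\int|u-v|^p X_j^{-sp}<\infty$ enters, and it must be exploited in an asymmetric way (integrating out the $\Omega^-$-variable first to generate the correct weight $X_j^{-sp}$). The remaining pieces are standard Fubini-type reductions relying on the one-variable integral $\int_\R(r^2+t^2)^{-\alpha/2}dt\simeq r^{1-\alpha}$ for $\alpha>1$, valid here since $\alpha=j+sp>1$.
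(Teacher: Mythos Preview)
The paper does not actually prove this lemma; it simply cites it as \cite[Appendix B, Lemma B.1]{bm} and remarks that the argument there (stated for $1<p<\infty$) also covers $p=1$. So there is no in-paper proof to compare against.

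Your argument is correct and self-contained. The three-piece splitting of the Gagliardo seminorm, the handling of the $\Omega^-\times\Omega^-$ block and of $J_2$ by integrating out the $j$-th variable via $\int_\R (r^2+t^2)^{-(j+sp)/2}\,dt=Cr^{1-j-sp}$ (valid since $j+sp>1$), and the asymmetric treatment of $J_1$ (integrate $Y'$ over $\R^{j-1}$, then $Y_j$ over $(0,\infty)$ to produce the weight $X_j^{-sp}$) are exactly the right moves. This is essentially the computation one expects behind the cited reference; in particular nothing in your argument requires $p>1$, so it simultaneously justifies the paper's remark about the case $p=1$. The edge case $j=1$ is harmless: then $v$ is a constant, the $\Omega^-\times\Omega^-$ block and $J_2$ vanish, and $J_1$ reduces to a one-dimensional integral handled the same way.
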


\noindent
[In the statement of Lemma B.1 in \cite{bm} it is assumed that $1<p<\infty$, but the argument there still holds for $p=1$.]

\medskip
\noindent
{\bf Step 1 completed.} We may extend $h$ to a map, still denoted $h$,  in $W^{s,p}(\R^j)$. Define 
\beq
\label{tt7}
h^t(X^j_\ast)=h(X^j_\ast/t),\ \forall\, X^j_\ast\in\R^j,\ \forall\, t>0.
\eeq

Note that
\beq
\label{tu1}
\text{if }h\in W^{\sigma, p}\text{ for some }\sigma\ge 0,\text{ then the mapping }t\mapsto h^t\in W^{\sigma, p}\text{ defined by \eqref{tt7} is continuous}.
\eeq

Using \eqref{tu1} with $\sigma = s$, we obtain that
\be*
I_{{\mathfrak C}, {\mathfrak C}}=\iint\limits_{{\mathfrak C}\times {\mathfrak C}}dX^j_\ast dY^j_\ast\, \frac{\left|\left[h^{1-\mu}(X^j_\ast)-h^1(X^j_\ast)\right]-\left[h^{1-\mu}(Y^j_\ast)-h^1(Y^j_\ast)\right]\right|^p}{|X^j_\ast-Y^j_\ast|^{j+sp}}\le \left|h^{1-\mu}-h^1\right|_{W^{s,p}(\R^j)}^p\to 0\text{ as }\mu\to 0.
\ee*

\medskip
\noindent
{\bf Step 2.} For every cube ${\mathfrak C}$ in ${\cal C}_j\cap{\cal C}$ and for every fixed $\mu_0\in (0,1)$ we have $I_{{\mathfrak C},\, E_{\mu_0}}\to 0$ as $\mu\to 0$\\
Indeed, by Step 1 it suffices to prove that for every cube ${\mathfrak C}'\neq {\mathfrak C}$ in ${\cal C}_j$ we have, with ${\mathfrak C}_{\mu_0}$ as in \eqref{tu2},  
\beq
\label{tt3}
I_{{\mathfrak C},\, {\mathfrak C}'_{\mu_0}}\to 0\text{ as }\mu\to 0.
\eeq

We note that
\beq
\label{tt4}
|X^j_\ast-Y^j_\ast|\ge \mu_0,\ \forall\, X^j_\ast\in {\mathfrak C},\ \forall\, Y^j_\ast\in {\mathfrak C}'_{\mu_0}. 
\eeq

By \eqref{tt4}, we have
\beq
\label{tt5}
I_{{\mathfrak C},\, {\mathfrak C}'_{\mu_0}}\le C(n, p, \mu_0)\left(\int\limits_{\mathfrak C} dX^j_\ast\, |g_\mu(X^j_\ast)-g(X^j_\ast)|^p+\int\limits_{{\mathfrak C}'_{\mu_0}}dY^j_\ast\, |g_\mu(Y^j_\ast)-g(Y^j_\ast)|^p  \right).
\eeq

We obtain \eqref{tt3} using \eqref{tt5} and \eqref{ha10}. 

\medskip
\noindent
{\bf Step 3.} We have $I_{E,\, E}\to 0$ as $\mu\to 0$\\
By Steps 1 and 2, Step 3 amounts to the following. Let $\xi>0$ be fixed arbitrarily small. Let ${\mathfrak C}\neq {\mathfrak C}'$ be two cubes in ${\cal C}_j$. Then there exists some $0<\mu_0<1$ such that 
\beq
\label{tu22}
I_{{\mathfrak C}_{\mu, \mu_0},\, {\mathfrak C}'_{\mu, \mu_0}}<\xi\ \text{for every }0<\mu<\mu_0.
\eeq

In order to establish \eqref{tu22}, we start from
\beq
\label{tu3}
I_{A, B}\le 2^{p-1}\left(\ \iint\limits_{A\times B}dX^j_\ast dY^j_\ast\, \frac{|g_\mu(X^j_\ast)-g_\mu(Y^j_\ast)|^p}{|X^j_\ast-Y^j_\ast|^{j+sp}}+\iint\limits_{A\times B}dX^j_\ast dY^j_\ast\, \frac{|g(X^j_\ast)-g(Y^j_\ast)|^p}{|X^j_\ast-Y^j_\ast|^{j+sp}}\right).
\eeq

We next establish the following estimate.
\begin{lemm}
\label{tu4}
Let ${\mathfrak C}\neq{\mathfrak C}'$ be two cubes in ${\cal C}_j\cap{\cal C}$. Then, for $0<\mu<1/2$ and for $X^j_\ast\in {\mathfrak C}$ and $Y^j_\ast\in {\mathfrak C}'$ such that
\beq
\label{tv1}
|X_\ast^j-0_{\mathfrak C}|< 1-\mu\text{ and }
 |Y_\ast^j-0_{{\mathfrak C}'}|< 1-\mu, 
\eeq
we have
\beq
\label{tu5}
\left|\left[0_{\mathfrak C}+\frac{X_\ast^j-0_{\mathfrak C}}{1-\mu}\right]- \left[0_{{\mathfrak C}'}+\frac{Y_\ast^j-0_{{\mathfrak C}'}}{1-\mu}\right]\right|\le C\,|X_\ast^j-Y_\ast^j|.
\eeq
\end{lemm}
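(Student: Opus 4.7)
My plan is to reduce \eqref{tu5}, via a direct algebraic identity, to a geometric separation estimate of the form $|X_\ast^j - Y_\ast^j|\ge c\mu$, and then to deduce the latter from the elementary observation that distinct $j$-cubes of the mesh meet only along their boundaries.

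First, a direct expansion gives the identity
\[
\left[0_{\mathfrak C}+\frac{X_\ast^j-0_{\mathfrak C}}{1-\mu}\right]- \left[0_{{\mathfrak C}'}+\frac{Y_\ast^j-0_{{\mathfrak C}'}}{1-\mu}\right] = \frac{1}{1-\mu}\bigl[(X_\ast^j - Y_\ast^j) - \mu(0_{\mathfrak C} - 0_{{\mathfrak C}'})\bigr].
\]
Since $\mu<1/2$ we have $1/(1-\mu)\le 2$; and since ${\cal C}$ is a fixed finite mesh, all cube centers lie in a bounded set, hence $|0_{\mathfrak C} - 0_{{\mathfrak C}'}|\le M$ with $M=M({\cal C})$. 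The triangle inequality then bounds the left-hand side of \eqref{tu5} by $2\,|X_\ast^j - Y_\ast^j| + 2M\mu$. Therefore the lemma is reduced to proving a separation of the form
\[
|X_\ast^j - Y_\ast^j|\ge c\,\mu,
\]
for some $c>0$ depending only on ${\cal C}$ (in particular, independent of $\mu$).

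To obtain this separation, I would use that $|X_\ast^j - 0_{\mathfrak C}|<1-\mu$ places $X_\ast^j$ at sup-distance strictly greater than $\mu$ from every face of $\p{\mathfrak C}$. Since ${\mathfrak C}$ and ${\mathfrak C}'$ are distinct $j$-cubes of the mesh, ${\mathfrak C}\cap{\mathfrak C}'\subset\p{\mathfrak C}$. Consequently the Euclidean segment from $X_\ast^j$ to $Y_\ast^j$ either terminates at a point of ${\mathfrak C}\cap{\mathfrak C}'\subset\p{\mathfrak C}$ (in case $Y_\ast^j\in{\mathfrak C}$) or must leave ${\mathfrak C}$ and hence cross $\p{\mathfrak C}$. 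In either case
\[
|X_\ast^j - Y_\ast^j|\ge \dist(X_\ast^j,\p{\mathfrak C})\ge \mu
\]
in sup norm (with an inessential loss of constant when switching to the Euclidean norm). Combined with the algebraic reduction above, this yields \eqref{tu5}.

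I do not foresee any serious obstacle: the displayed identity is a one-line computation, and the geometric content boils down to the set-theoretic inclusion ${\mathfrak C}\cap{\mathfrak C}'\subset\p{\mathfrak C}$ together with the uniform boundedness of the mesh. The only mild bookkeeping point is the case analysis according to whether $Y_\ast^j$ lies in ${\mathfrak C}$ or not, but both cases give the same bound, so the argument goes through uniformly.
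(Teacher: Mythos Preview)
Your algebraic reduction is correct, and the separation bound $|X_\ast^j-Y_\ast^j|\ge\mu$ you aim for is also true; together they give \eqref{tu5} with $C=2+2M$ (where $M$ is the diameter of the set of centers in ${\cal C}$). This is a different and in some ways cleaner route than the paper's, which works coordinate by coordinate, proving $\bigl|[C_i+x_i/(1-\mu)]-[C'_i+y_i/(1-\mu)]\bigr|\le C\,|(C_i+x_i)-(C'_i+y_i)|$ through six cases on the parity and size of $|C_i-C'_i|$; that approach yields a universal constant rather than your ${\cal C}$-dependent one, but the latter is harmless for the application in Step~3 of Lemma~\ref{ug1}.

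However, your justification of the separation has a gap. Since $j<n$, the $j$-cubes ${\mathfrak C},{\mathfrak C}'$ need not lie in the same $j$-plane; when they do not, the segment $[X_\ast^j,Y_\ast^j]$ leaves $\overline{\mathfrak C}$ at its very starting point, by exiting the $j$-plane of ${\mathfrak C}$, and never meets the relative boundary $\partial{\mathfrak C}$. (Take $n=2$, $j=1$, ${\mathfrak C}=\{1\}\times(-1,1)$, ${\mathfrak C}'=(-1,1)\times\{1\}$: the segment from $(1,0)$ to $(0,1)$ is disjoint from $\partial{\mathfrak C}=\{(1,\pm 1)\}$.) So ``must leave ${\mathfrak C}$ and hence cross $\partial{\mathfrak C}$'' fails in general, and $|X_\ast^j-Y_\ast^j|\ge\dist(X_\ast^j,\partial{\mathfrak C})$ does not follow from your argument. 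The fix is a short coordinate check in the spirit of the paper. Write $0_{\mathfrak C}=(C_i)$, with $C_i$ odd iff $i$ is fixed for ${\mathfrak C}$; the free index sets $F,F'$ of ${\mathfrak C},{\mathfrak C}'$ each have size $j$. If $F\ne F'$, pick $i\in F\setminus F'$: then $(Y_\ast^j)_i=C'_i$ is an odd integer while $(X_\ast^j)_i=C_i+x_i$ with $C_i$ even and $|x_i|<1-\mu$, so $|(X_\ast^j)_i-(Y_\ast^j)_i|\ge 1-|x_i|>\mu$. If $F=F'$, pick $i$ with $C_i\ne C'_i$: either both are odd and $|(X_\ast^j)_i-(Y_\ast^j)_i|=|C_i-C'_i|\ge 2$, or both are even and $|(X_\ast^j)_i-(Y_\ast^j)_i|>|C_i-C'_i|-2(1-\mu)\ge 2\mu$. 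In every case $|X_\ast^j-Y_\ast^j|>\mu$, which is what you need.
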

\begin{proof}[Proof of Lemma \ref{tu4}] Recall that we assume that $\ve=1$ and $T=0$. Write
\beq
\label{ha11}
0_{\mathfrak C}=(C_1,\ldots, C_n),\ 0_{{\mathfrak C}'}=(C'_1,\ldots, C'_n).
\eeq
 
 One may check the following properties of the $C_i$'s:\\
 a) Each $C_i$ is an integer. \\
b) Exactly $n-j$ $C_i$'s are odd. \\
c) The open cube ${\mathfrak C}$ is given by the following system of equations and inequalities:
\be*
X_i=C_i,\ \text{if }C_i\text{ is odd},\ |X_i-C_i|<1,\ \text{if }C_i\text{ is even}.
\ee*
d) Thus every point $X^j_\ast$ as in \eqref{tv1} is of the form 
\be*
X^j_\ast=(C_1+x_1,\ldots, C_n+x_n),\text{ with }x_i=0\text{ if }C_i\text{ is odd and }|x_i|<1-\mu\text{ if }C_i\text{ is even}.
\ee*

Similarly if we write $Y^j_\ast=(C'_1+y_1,\ldots, C'_n+y_n)$.

\smallskip
Estimate \eqref{tu5} will follow from the next estimate, valid for each coordinate:
\beq
\label{tv2}
\left|\left[C_i+\frac{x_i}{1-\mu}\right]-\left[C'_i+\frac{y_i}{1-\mu}\right]  \right|\le C\, |[C_i+x_i]-[C'_i+y_i]|\text{ if }0<\mu<1/2,\ |x_i|<1-\mu,\ |y_i|<1-\mu.
\eeq

In order to establish the validity of \eqref{tv2}, we consider the following cases.

\medskip
\noindent
{\bf Case 1.} $|C_i-C'_i|\ge 3$\\
Then we have
\be*
|[C_i+x_i]-[C'_i+y_i]|\ge |C_i-C'_i|-2\text{ and }\left|\left[C_i+\frac{x_i}{1-\mu}\right]-\left[C'_i+\frac{y_i}{1-\mu}\right]  \right|\le |C_i-C'_i|+2,  
\ee*
and thus \eqref{tv2} holds with $C=5$.

\medskip
\noindent
{\bf Case 2.} $|C_i-C'_i|=2$ and $C_i$ is odd\\
Then $x_i=y_i=0$ and thus \eqref{tv2} holds with $C=1$.

\smallskip
The same argument applies to the next case.

\medskip
\noindent
{\bf Case 3.} $C_i=C'_i$ and $C_i$ is odd

\medskip
\noindent
{\bf Case 4.} $|C_i-C'_i|=2$ and $C_i$ is even\\
We may assume that $C_i=2$, $C'_i=0$, and we have to prove that
\be*
\left|2+\frac{x-y}{1-\mu}\right|\le C\, |2+(x-y)|\text{ when }|x|<1-\mu\text{ and }|y|<1-\mu,
\ee*
which amounts to 
\be*
2+\frac{x-y}{1-\mu}\le C\, [2+(x-y)]\text{ when }|x|<1-\mu\text{ and }|y|<1-\mu.
\ee*

The above inequality holds with $C=2$ (provided $0<\mu<1/2$). 

\medskip
\noindent
{\bf Case 5.} $C_i=C'_i$ and $C_i$ is even\\
Then \eqref{tv2} holds with $C=2$ (provided $0<\mu<1/2$). 

\medskip
\noindent
{\bf Case 6.} $|C_i-C'_i|=1$\\
We may assume that $C_i=1$ and $C'_i=0$. As above, for $0<\mu<1/2$ estimate \eqref{tv2} follows from \be*
\left|1-\frac{y}{1-\mu}\right|\le 2\, |1-y|\text{ when }|y|<1-\mu.\qedhere
\ee*
\end{proof}

\medskip
\noindent
{\bf Step 3 completed.} We estimate $I_{{\mathfrak C}_{\mu, \mu_0},\, {\mathfrak C}'_{\mu, \mu_0}}$ using  \eqref{tu3} with $A={\mathfrak C}_{\mu, \mu_0}$  and $B={\mathfrak C}'_{\mu, \mu_0}$. After the changes of variables 
\beq
\label{td1}
{\mathfrak C}_{\mu, \mu_0}\ni X^j_\ast\mapsto 0_{\mathfrak C}+\frac{X_\ast^j-0_{\mathfrak C}}{1-\mu}\in {\mathfrak C}^c_{(\mu_0-\mu)/(1-\mu)} ,\ {\mathfrak C}'_{\mu, \mu_0}\ni Y^j_\ast\mapsto 0_{{\mathfrak C}'}+\frac{Y_\ast^j-0_{{\mathfrak C}'}}{1-\mu}\in {\mathfrak C}_{(\mu_0-\mu)/(1-\mu)}^{\prime\, c}
\eeq
in the first double integral in \eqref{tu3}, Lemma \ref{tu4} implies that for $0<\mu_0<1/2$ we have
\beq
\label{tu55}
\begin{aligned}
I_{{\mathfrak C}_{\mu, \mu_0},\, {\mathfrak C}'_{\mu, \mu_0}}\le & C(n, p)\iint\limits_{{\mathfrak C}^c_{(\mu_0-\mu)/(1-\mu)}\times {\mathfrak C}^{\prime\, c}_{(\mu_0-\mu)/(1-\mu)}}dX^j_\ast dY^j_\ast\, \frac{|g(X^j_\ast)-g(Y^j_\ast)|^p}{|X^j_\ast-Y^j_\ast|^{j+sp}}\\
&+C(n, p)\iint\limits_{{\mathfrak C}_{\mu, \mu_0}\times {\mathfrak C}'_{\mu, \mu_0}}dX^j_\ast dY^j_\ast\, \frac{|g(X^j_\ast)-g(Y^j_\ast)|^p}{|X^j_\ast-Y^j_\ast|^{j+sp}} \\
\le & C(n, p) \iint\limits_{{\mathfrak C}^c_{2\, \mu_0}\times {\mathfrak C}^{\prime\, c}_{2\, \mu_0}}dX^j_\ast dY^j_\ast\, \frac{|g(X^j_\ast)-g(Y^j_\ast)|^p}{|X^j_\ast-Y^j_\ast|^{j+sp}}.
\end{aligned}
\eeq

We complete Step 3 by noting that the last double integral in \eqref{tu55} goes to $0$ as $\mu_0\to 0$ (since $g\in W^{s,p}({\cal C}_j\cap {\cal C}$)).

\medskip
\noindent
{\bf Step 4.} We have $I_{F,\, F}\to 0$ as $\mu\to 0$\\
In view of Step 1, Step 4 is an immediate consequence of the fact that the restriction of $g$ to ${\cal C}_{j-1}\cap {\cal C}$ belongs to $W^{s,p}$ and of the following
\begin{lemm}
\label{tw1} 
Let $0<s<1$, $1\le p<\infty$, $j\ge 1$ and $h\in W^{s,p}({\cal C}_{j-1}\cap {\cal C})$. Then, with $C=C(j, s, p, {\cal C})$, we have 
\beq
\label{tw22}
\sum_{{\mathfrak C}\neq{\mathfrak C}'}\ \iint\limits_{{\mathfrak C}^c_{1/2}\times {\mathfrak C}^{\prime\, c}_{1/2}}dX^j_\ast dY^j_\ast\, \frac{|h(X^{j-1}_\ast)-h(Y^{j-1}_\ast)|^p}{|X^j_\ast-Y^j_\ast|^{j+sp}}\le C\, |h|_{W^{s,p}({\cal C}_{j-1}\cap{\cal C})}^p.
\eeq

\end{lemm}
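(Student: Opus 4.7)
My plan is to reduce the left-hand side of \eqref{tw22} to the Gagliardo seminorm of $h$ on the $(j-1)$-skeleton via a radial change of variables followed by a kernel estimate. In each cube $\mathfrak C$ of ${\mathcal C}_j\cap{\mathcal C}$, parametrize ${\mathfrak C}^c_{1/2}\setminus\{0_{\mathfrak C}\}$ by $(r,v)\in[1/2,1]\times \partial \mathfrak C$ through $X^j_\ast=(1-r)0_{\mathfrak C}+rv$, so that $v=X^{j-1}_\ast$ and $dX^j_\ast=r^{j-1}\,dr\,dv$, the Jacobian $r^{j-1}$ being bounded above and below by positive constants on $r\in[1/2,1]$. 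Treating $Y^j_\ast\in {\mathfrak C}^{\prime\, c}_{1/2}$ the same way via $(\rho,w)$ and setting $u=1-r$, $\tau=1-\rho$, $A=v-w$, $\omega=v-0_{\mathfrak C}$, $\sigma=w-0_{{\mathfrak C}'}$ (so $|\omega|=|\sigma|=1$ in sup-norm), a direct computation gives
\be*
X^j_\ast-Y^j_\ast=A-u\omega+\tau\sigma.
\ee*
The task reduces to bounding the kernel
\be*
K(v,w):=\int_{1/2}^1\int_{1/2}^1\frac{r^{j-1}\rho^{j-1}\,dr\,d\rho}{|X^j_\ast-Y^j_\ast|^{j+sp}}.
\ee*

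The key step is the pointwise geometric lower bound
\be*
|X^j_\ast-Y^j_\ast|\ge \frac 1C\bigl(|v-w|+(1-r)+(1-\rho)\bigr),
\ee*
uniform in $X^j_\ast\in{\mathfrak C}^c_{1/2}$, $Y^j_\ast\in{\mathfrak C}^{\prime\, c}_{1/2}$, ${\mathfrak C}\neq{\mathfrak C}'$, with $C=C({\mathcal C})$. I would establish this by a case analysis on the relative position of ${\mathfrak C}$ and ${\mathfrak C}'$. When the two cubes share no face, $|X^j_\ast-Y^j_\ast|\ge \dist({\mathfrak C},{\mathfrak C}')>0$ while the right-hand side is bounded above, so the inequality is automatic. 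When ${\mathfrak C}$ and ${\mathfrak C}'$ share a $k$-dimensional face with $0\le k\le j-1$, I inspect the identity $X^j_\ast-Y^j_\ast=A-u\omega+\tau\sigma$ coordinate by coordinate: along each coordinate in which $0_{{\mathfrak C}'}-0_{\mathfrak C}$ is nonzero (the directions \enquote{transverse} to the shared face), the signs and magnitudes of the corresponding entries of $\omega$ and $\sigma$ are rigidly determined by the geometry and force the resulting component of $-u\omega+\tau\sigma$ to have absolute value of order $u$ or $\tau$; along the remaining \enquote{tangential} coordinates, the component equals $A$ modulo an $O(u+\tau)$ perturbation. Taking the sup-norm and splitting into the subcases $|A|\le u+\tau$ and $|A|\ge u+\tau$ then yields the claimed bound. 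This geometric inequality is the main obstacle of the proof; the shared-$(j-1)$-face configuration already illustrates the full mechanism, and the lower-dimensional shared-face cases proceed analogously.

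Granted the pointwise bound, the substitution $u=1-r$, $\tau=1-\rho$ gives
\be*
K(v,w)\le C\int_0^{1/2}\int_0^{1/2}\frac{du\,d\tau}{(|v-w|+u+\tau)^{j+sp}}\le \frac{C}{|v-w|^{(j-1)+sp}},
\ee*
the last step following from iterated one-dimensional integration (with logarithmic or bounded modifications when $j+sp\le 2$, absorbed using the upper bound on $|v-w|$ in terms of the diameter of ${\mathcal C}$). Changing variables back, summing over ${\mathfrak C}\neq{\mathfrak C}'$, and using that each point of ${\mathcal C}_{j-1}\cap{\mathcal C}$ lies on $\partial \mathfrak C$ for at most boundedly many cubes $\mathfrak C$ of ${\mathcal C}_j\cap{\mathcal C}$, the left-hand side of \eqref{tw22} is bounded above by
\be*
C\iint_{({\mathcal C}_{j-1}\cap{\mathcal C})^2}\frac{|h(v)-h(w)|^p}{|v-w|^{(j-1)+sp}}\,dv\,dw\le C|h|_{W^{s,p}({\mathcal C}_{j-1}\cap{\mathcal C})}^p,
\ee*
which is the stated estimate.
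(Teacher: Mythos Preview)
Your overall architecture coincides with the paper's: both parametrize ${\mathfrak C}^c_{1/2}$ radially, reduce \eqref{tw22} to the kernel bound
\be*
\int_{1/2}^1\int_{1/2}^1\frac{dr\,d\rho}{|X^j_\ast-Y^j_\ast|^{j+sp}}\le\frac{C}{|v-w|^{(j-1)+sp}},
\ee*
and then sum over faces. The divergence is in how you justify the kernel bound. You assert the symmetric pointwise inequality $|X^j_\ast-Y^j_\ast|\ge C(|v-w|+u+\tau)$ and integrate; the paper proceeds asymmetrically, first proving $|X^j_\ast-Y^j_\ast|\ge C|X^j_\ast-w|$ (one-sided projection) and then handling the remaining one-variable integral by an explicit split.

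Your symmetric lower bound is in fact true, and once granted your integration is fine. The gap is in your justification of it. The claim that in the transverse coordinates \enquote{the signs and magnitudes of the corresponding entries of $\omega$ and $\sigma$ are rigidly determined} is not correct as stated: for two $j$-cubes sharing a $(j-1)$-face, in a direction where (say) ${\mathfrak C}'$ is constant but ${\mathfrak C}$ varies, the entry of $\omega$ ranges freely over $[-1,1]$ as $v$ moves along $\partial{\mathfrak C}$; only the entry of $\sigma$ is pinned. The inequality still holds in such coordinates (one checks $|(X^j_\ast-Y^j_\ast)_i|\ge u$ or $\ge\tau$ by a monotonicity argument, not by rigidity), and the tangential coordinates require a separate balance between $|A_i|$ and $u+\tau$; but a complete case analysis is more delicate than your sketch suggests, and the claim that lower-dimensional shared faces \enquote{proceed analogously} hides real work.

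The paper avoids this case analysis entirely. Its device is a Lipschitz-gluing lemma: whenever $\overline{\mathfrak C}\cap\overline{{\mathfrak C}'}\neq\emptyset$, any map that is $L$-Lipschitz on each closed cube separately is $CL$-Lipschitz on their union (proved by a short compactness argument). Applying this to the map that is the identity on $\overline{\mathfrak C}$ and the radial projection onto $\partial{\mathfrak C}'$ on $\overline{{\mathfrak C}'}$ yields the one-sided bound $|X^j_\ast-Y^j_\ast|\ge C|X^j_\ast-w|$ in one stroke, uniformly over all shared-face configurations. Your symmetric bound can in fact be recovered from the same lemma (apply it to the map that radially projects on \emph{both} cubes), so the cleanest fix to your proof is to replace the coordinate analysis by that Lipschitz-gluing step.
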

\begin{proof}[Proof of Lemma \ref{tw1}]
Estimate \eqref{tw22} is a special case of the following more general inequality, valid for nonegative measurable $f$: 
\beq
\label{tw2}
\iint\limits_{{\mathfrak C}^c_{1/2}\times {\mathfrak C}^{\prime\, c}_{1/2}}dX^j_\ast dY^j_\ast\, \frac{f(X^{j-1}_\ast, Y^{j-1}_\ast)}{|X^j_\ast-Y^j_\ast|^{j+sp}}\le C\, \iint\limits_{(\overline{\mathfrak C}\cap {\cal C}_{j-1})\times (\overline{{\mathfrak C}'}\cap{\cal C}_{j-1})}dX^{j-1}_\ast dY^{j-1}_\ast\, \frac{f(X^{j-1}_\ast, Y^{j-1}_\ast)}{|X^{j-1}_\ast-Y^{j-1}_\ast|^{j-1+sp}}.
\eeq

If we express the left-hand side of \eqref{tw2} using polar coordinates on $\overline{\mathfrak C}\cap {\cal C}_{j-1}$ (respectively on $\overline{{\mathfrak C}'}\cap {\cal C}_{j-1}$), then \eqref{tw2} amounts to the following
\beq
\label{tw3}
\int\limits_{1/2}^1\int\limits_{1/2}^1 dt d\tau\, \frac 1{|[(1-t)\, 0_{{\mathfrak C}}+t\, X^{j-1}_\ast]-[(1-\tau)\, 0_{{\mathfrak C}'}+\tau\, Y^{j-1}_\ast]|^{a}}\le C\, \frac{1}{|X^{j-1}_\ast-Y^{j-1}_\ast|^{a-1}},
\eeq
which  is valid whenever $a>1$, ${\mathfrak C}\neq{\mathfrak C}'$, $X^{j-1}_\ast\in \overline{\mathfrak C}\cap {\cal C}_{j-1}$ and $Y^{j-1}_\ast\in \overline{{\mathfrak C}'}\cap {\cal C}_{j-1}$.

Clearly, estimate \eqref{tw3} holds when $\overline{\mathfrak C}\cap \overline{{\mathfrak C}'}=\emptyset$ (since both sides of \eqref{tw3} are bounded from above and below by finite positive constants). 

We may thus assume that 
\beq
\label{tw4}
\overline{\mathfrak C}\cap \overline{{\mathfrak C}'}\neq\emptyset.
\eeq

 In this case, the idea is to mimic the proof of the estimate \eqref{a13}.

\medskip
\noindent
{\bf Step 1 in the proof of \eqref{tw3}.} We claim that, assuming \eqref{tw4}, there exists some $C=C(n, j)$ such that for $X^{j-1}_\ast\in \overline{\mathfrak C}\cap {\cal C}_{j-1}$ and $Y^{j-1}_\ast\in \overline{{\mathfrak C}'}\cap {\cal C}_{j-1}$ and $1/2\le t, \tau \le 1$ we have
\beq
\label{tw5}
|[(1-t)\, 0_{{\mathfrak C}}+t\, X^{j-1}_\ast]-[(1-\tau)\, 0_{{\mathfrak C}'}+\tau\, Y^{j-1}_\ast]|\ge C\, |[(1-t)\, 0_{{\mathfrak C}}+t\, X^{j-1}_\ast]-Y^{j-1}_\ast|.
\eeq

The proof of \eqref{tw5} relies on the following geometrically clear inequality, whose proof is postponed.

\begin{lemm}
\label{tz1}
Assume that \eqref{tw4} holds. Then there exists some $C=C(n, j)$ such that if $X^j_\ast\in \overline{\mathfrak C}$ and $Y^j_\ast\in\overline{{\mathfrak C}'}$, then there exists some $Z^j_\ast\in \overline{\mathfrak C}\cap {{\mathfrak C}'}$ such that
\beq
\label{tz2}
|X^j_\ast-Z^j_\ast|+|Y^j_\ast-Z^j_\ast|\le C\, |X^j_\ast-Y^j_\ast|.
\eeq

Equivalently, if $P:\overline{\mathfrak C}\cup \overline{{\mathfrak C}'}\to\R^\ell$ is L-Lipschitz on $\overline{\mathfrak C}$ and on $\overline{{\mathfrak C}'}$, then $P$ is $CL$-Lipschitz on $\overline{\mathfrak C}\cup \overline{{\mathfrak C}'}$.
\end{lemm}

Assuming Lemma \ref{tz1} established, we proceed as in the proof of Lemma \ref{lemb3}: we let
\be*
P(X^j_\ast)=X^j_\ast, \ \forall\, X^j_\ast\in \overline{\mathfrak C}, P(Y^j_\ast)=Y^{j-1}_\ast,\ \forall\, Y^j_\ast\in \overline{{\mathfrak C}'}^c_{1/2}.
\ee*
We extend $P$ from ${\overline{{\mathfrak C}^{\prime}}}^c_{1/2}$ to $\overline{{\mathfrak C}'}$ without increasing its Lipschitz constant (which is independent of ${\mathfrak C}'$). For this $P$, estimate \eqref{tw5} reads 
\be*
|P(X^j_\ast)-P(Y^j_\ast)|\le \frac 1C\, |X^j_\ast-Y^j_\ast|,\ \forall\, X^j_\ast\in {\mathfrak C}^c_{1/2},\ \forall\, Y^j_\ast\in {\mathfrak C}^{\prime\, c}_{1/2},
\ee*
which follows from Lemma \ref{tz1}.

\medskip
\noindent
{\bf Step 2 in the proof of \eqref{tw3}.} In view of \eqref{tw5}, we have reduced \eqref{tw3} to 
\beq
\label{tw6}
\int\limits_{1/2}^1 dt\, \frac 1{|[(1-t)\, 0_{{\mathfrak C}}+t\, X^{j-1}_\ast]- Y^{j-1}_\ast|^{a}}\le C\, \frac{1}{|X^{j-1}_\ast-Y^{j-1}_\ast|^{a-1}}.
\eeq

Combining \eqref{tw4} with the fact that ${\mathfrak C}\cap{\mathfrak C}'=\emptyset$, we find that 
\beq
\label{tw7}
1\le |Y^{j-1}_\ast-0_{\mathfrak C}|\le 3 \text{ and }|X^{j-1}_\ast-Y^{j-1}_\ast|\le 24.
\eeq

Using \eqref{tw7}, we obtain that
\beq
\label{tw8}
\begin{aligned}
|[(1-t)\, 0_{{\mathfrak C}}+t\, X^{j-1}_\ast]- Y^{j-1}_\ast|&=|[0_{{\mathfrak C}}-Y^{j-1}_\ast]-t[0_{{\mathfrak C}}-X^{j-1}_\ast]|\\
&\ge 1-t\text{ when }1/2\le t\le 1-|X^{j-1}_\ast-Y^{j-1}_\ast|/100
\end{aligned}
\eeq
and
\beq
\label{tw9}
\begin{aligned}
|[(1-t)\, 0_{{\mathfrak C}}+t\, X^{j-1}_\ast]- Y^{j-1}_\ast|&=|t[X^{j-1}_\ast-Y^{j-1}_\ast]+(1-t)[0_{{\mathfrak C}}-Y^{j-1}_\ast]|\\
&\ge t|X^{j-1}_\ast-Y^{j-1}_\ast|-3(1-t)\\
&\ge C|X^{j-1}_\ast-Y^{j-1}_\ast|\text{ when }1-|X^{j-1}_\ast-Y^{j-1}_\ast|/100< t\le 1.
\end{aligned}
\eeq

Estimate \eqref{tw6} follows from \eqref{tw8} and \eqref{tw9}. \end{proof}

In order to complete Step 4, it remains to proceed to the 

\begin{proof}[Proof of Lemma \ref{tz1}]
Let ${\mathfrak E}=\overline{\mathfrak C}\cap  \overline{{\mathfrak C}'}$, and let $\ell$ be the Hausdorff dimension of ${\mathfrak E}$. Let us note that ${\mathfrak E}$ is a cube in ${\cal C}_\ell$. After translation and permutation of the coordinates, we may identify ${\mathfrak E}$ with a cube in $\R^\ell$, and then we may write 
\be*
\overline{{\mathfrak C}}={\mathfrak D}\times {\mathfrak E},\ \overline{{\mathfrak C}'}={\mathfrak D}'\times {\mathfrak E}
\ee*
with ${\mathfrak D}$, ${\mathfrak D}'$ closed cubes in ${\cal C}_{j-\ell}(\R^{n-\ell})$ such that 
\beq
\label{fha1}
{\mathfrak D}\cap{\mathfrak D}'=\{Z'\}\text{ for some point }Z' \text{ (which has to be a vertex of both }{\mathfrak D}\text{ and }{\mathfrak D}'\text{)}.  
\eeq

We will split a point $X^j_\ast\in \overline{{\mathfrak C}}$ as 
\be*
X^j_\ast=(X', X''),\text{ with }X'\in {\mathfrak D}, \ X''\in {\mathfrak E};\text{ similarly for a point }Y^j_\ast\in \overline{{\mathfrak C}'}.
\ee*

Assume that we have established the estimate
\beq
\label{tz3}
|X'-Z'|+|Y'-Z'|\le C|X'-Y'|,\ \forall\, X'\in {\mathfrak D},\ \forall\, Y'\in {\mathfrak D}'.
\eeq

Then clearly $(Z', X'')\in \overline{\mathfrak C}\cap  \overline{{\mathfrak C}'}$ and
\be*
|X^j_\ast-(Z', X'')|+|Y^j_\ast-(Z', X'')|\le C|X^j_\ast-Y^j_\ast|,
\ee*
i.e. \eqref{tz2} holds.

It thus remains to prove \eqref{tz3}. This is obtained by contradiction. Assume that there are sequences $\{ X^{', k}_\ast\}\subset {\mathfrak D}\setminus \{Z'\}$ and $\{ Y^{', k}_\ast\}\subset {\mathfrak D}'\setminus \{Z'\}$ such that 
\beq
\label{tz4}
|X^{', k}-Z'|+|Y^{', k}-Z'|\ge k|X^{', k}-Y^{', k}|.
\eeq

By symmetry and after passing to a subsequence, we may assume that 
\be*
X^{', k}-Z'=\delta_k W^k, \ Y^{', k}-Z'=\lambda_k T^k,\ |W^k|=1,\ W^k\to W,\ |T^k|=1,\ T^k\to T,\ 0\le\lambda_k\le\delta_k,\ \lambda_k/\delta_k\to \mu.
\ee*

Using \eqref{tz4}, we obtain that $W=T$ (and $\mu=1$). However, this cannot happen. Indeed, since $X^{', k}\in {\mathfrak D}$, we have $Z'+W^k\in {\mathfrak D}$ (check it on a picture using \eqref{fha1}). Thus $Z'+W\in {\mathfrak D}$. Similarly, $Z'+T\in {\mathfrak D}'$. Since $W=T$, we obtain that ${\mathfrak D}\cap {\mathfrak D}'$ contains $Z'+W$, a contradiction.
\end{proof}

Step 4 is complete.

\medskip
\noindent
{\bf Step 5.} We have $I_{E,\, F}\to 0$ as $\mu\to 0$\\

In view of Steps 1 and 2, it suffices to establish the following. Let $\xi>0$ be fixed arbitrarily small. Let ${\mathfrak C}\neq {\mathfrak C}'$ be two cubes in ${\cal C}_j$. Then there exists some $0<\mu_0<1$ such that 
\beq
\label{tz22}
I_{{\mathfrak C}_{\mu, \mu_0},\, {\mathfrak C}^{\prime\, c}_{\mu}}=\iint\limits_{{\mathfrak C}_{\mu, \mu_0}\times {\mathfrak C}^{\prime\, c}_{\mu}}dX^j_\ast dY^j_\ast\, \frac{|g_\mu(X^{j}_\ast)-g(Y^{j-1}_\ast)|^p}{|X^j_\ast-Y^j_\ast|^{j+sp}}<2\xi\ \text{for every }0<\mu<\mu_0.
\eeq

By Step 4,   we have 
\beq
\label{tz23}
\iint\limits_{{\mathfrak C}_{\mu, \mu_0}\times {\mathfrak C}^{\prime\, c}_{\mu}}dX^j_\ast dY^j_\ast\, \frac{|g(X^{j-1}_\ast)-g(Y^{j-1}_\ast)|^p}{|X^j_\ast-Y^j_\ast|^{j+sp}}\to 0\text{ uniformly in }0<\mu<\mu_0\text{ as }\mu_0\to 0.
\eeq

Using \eqref{tz23}, \eqref{tz22} amounts to the existence of some $\mu_0$ such that
\beq
\label{tc1}
J=\iint\limits_{{\mathfrak C}_{\mu, \mu_0}\times {\mathfrak C}^{\prime\, c}_{\mu}}dX^j_\ast dY^j_\ast\, \frac{|g_\mu(X^{j}_\ast)-g(X^{j-1}_\ast)|^p}{|X^j_\ast-Y^j_\ast|^{j+sp}}<\xi\ \text{for every }0<\mu<\mu_0.
\eeq

The key ingredient in the proof of \eqref{tc1} is the following
\begin{lemm}
\label{tc2}
Let $a>j$. Then for ${\mathfrak C}\neq{\mathfrak C}'$ cubes in ${\cal C}\cap {\cal C}_j$ and for $X^j_\ast\in {\mathfrak C}^c_{1/2}$ we have
\beq
\label{tc3}
\int\limits_{{\mathfrak C}^{\prime\, c}_{1/2}}dY^j_\ast\, \frac 1{|X^j_\ast-Y^j_\ast|^{a}}\le C(n, j, a, {\cal C})\frac 1{|X^j_\ast-X^{j-1}_\ast|^{a-j}}.
\eeq
\end{lemm}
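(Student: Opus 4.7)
The plan is to bound the integral over the smaller set ${\mathfrak C}^{\prime c}_{1/2}$ by the integral over all of $\overline{\mathfrak C'}$, use Lemma \ref{tz1} to show that the whole of $\overline{\mathfrak C'}$ sits at distance at least $d/C_0$ from $X^j_\ast$ (where $d := |X^j_\ast - X^{j-1}_\ast|$), and conclude with a standard dyadic decomposition exploiting that $\overline{\mathfrak C'}$ is a $j$-dimensional set.

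First, if $\overline{\mathfrak C} \cap \overline{\mathfrak C'} = \emptyset$, then $|X^j_\ast - Y^j_\ast|$ is bounded below by a positive constant depending only on ${\cal C}$, so the left-hand side is bounded, while the right-hand side is at least $2^{a-j}$ since $d \le 1/2$. The inequality is then trivial.

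Otherwise $\mathfrak E := \overline{\mathfrak C} \cap \overline{\mathfrak C'} \ne \emptyset$, and I apply Lemma \ref{tz1} to the pair $({\mathfrak C}, {\mathfrak C'})$: for any $Y^j_\ast \in \overline{\mathfrak C'}$, there exists $Z^j_\ast \in \mathfrak E$ with $|X^j_\ast - Z^j_\ast| + |Y^j_\ast - Z^j_\ast| \le C_0\,|X^j_\ast - Y^j_\ast|$. Since ${\mathfrak C} \ne {\mathfrak C}'$, the face $\mathfrak E$ has dimension at most $j-1$ and hence lies in the $(j-1)$-skeleton of $\mathfrak C$, which is precisely the set on which $X^{j-1}_\ast$ ranges. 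In the sup norm, the radial projection $X^j_\ast \mapsto X^{j-1}_\ast$ satisfies $|X^j_\ast - X^{j-1}_\ast| = \dist(X^j_\ast, \partial \mathfrak C) = d$, so $|X^j_\ast - Z^j_\ast| \ge d$ and
\be*
|X^j_\ast - Y^j_\ast| \ge d_0 := \frac{d}{C_0}, \qquad \forall\, Y^j_\ast \in \overline{\mathfrak C'}.
\ee*

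Finally, since $\overline{\mathfrak C'}$ is a bounded $j$-dimensional cube in $\R^n$, it satisfies ${\cal H}^j(\overline{\mathfrak C'} \cap B(X^j_\ast, R)) \le C R^j$ for every $R > 0$. Setting $R_k = 2^k d_0$ for $k \ge 0$ and decomposing $\overline{\mathfrak C'}$ dyadically into shells,
\be*
\int_{{\mathfrak C}^{\prime c}_{1/2}} \frac{dY^j_\ast}{|X^j_\ast - Y^j_\ast|^a} \le \int_{\overline{\mathfrak C'}} \frac{dY^j_\ast}{|X^j_\ast - Y^j_\ast|^a} \le \sum_{k \ge 0} \frac{C\,R_{k+1}^j}{R_k^a} = C\,d_0^{\,j-a} \sum_{k \ge 0} 2^{k(j-a)} \le \frac{C}{d^{a-j}},
\ee*
where the geometric series converges thanks to $a > j$. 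The only non-routine step is the distance bound, which rests on Lemma \ref{tz1} together with the observation $\mathfrak E \subset \partial \mathfrak C$; this latter is immediate since $\mathfrak E$, being the intersection of two distinct $j$-cubes of the mesh, is a proper face of $\mathfrak C$.
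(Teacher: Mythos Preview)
Your proof is correct, and the route you take is a genuine (and somewhat cleaner) alternative to the paper's argument.

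Both proofs dispose of the case $\overline{\mathfrak C}\cap\overline{\mathfrak C'}=\emptyset$ trivially. For the intersecting case, the paper proceeds by combining two estimates: the inequality $|X^{j-1}_\ast-Y^j_\ast|\le C|X^j_\ast-Y^j_\ast|$ (obtained from \eqref{tw5}, itself a consequence of Lemma~\ref{tz1}) and the direct observation $|X^j_\ast-X^{j-1}_\ast|\le |X^j_\ast-Y^j_\ast|$, to reach the pointwise comparison $|X^j_\ast-Y^j_\ast|\ge C(d+|Z-Y^j_\ast|)$ with $Z$ the orthogonal projection of $X^{j-1}_\ast$ onto the $j$-plane $\Pi$ spanned by ${\mathfrak C}'$; it then integrates this explicitly over all of $\Pi$. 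You instead extract from Lemma~\ref{tz1} only the uniform lower bound $|X^j_\ast-Y^j_\ast|\ge d/C_0$ (via the observation that $\mathfrak E\subset\partial\mathfrak C$, so that $|X^j_\ast-Z^j_\ast|\ge d$), and then run a dyadic shell decomposition using the fact that $\overline{\mathfrak C'}$ is $j$-dimensional. Your approach is more direct---it avoids the intermediate estimates \eqref{tw5} and \eqref{te2}--\eqref{te4} and the passage to the ambient $j$-plane---at the modest cost of a geometric sum. The paper's finer pointwise comparison could in principle yield more information, but for the statement at hand your argument is equally effective.
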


\begin{proof}[Proof of Lemma \ref{tc2}]  
When $\overline{\mathfrak C}\cap\overline{{\mathfrak C}'}=\emptyset$, the left-hand side of \eqref{tc3} is bounded from above by a positive constant, and the right-hand side of \eqref{tc3} is bounded from below by a positive constant, so that the conclusion is clear. 
We may thus assume that $\overline{\mathfrak C}\cap\overline{{\mathfrak C}'}\neq\emptyset$. We are then in position to apply estimate \eqref{tw5} (with the roles of $X^j_\ast$ and $Y^j_\ast$ reversed) and infer that 
\beq
\label{te1}
|X^{j-1}_\ast-Y^j_\ast|\le C|X^j_\ast-Y^j_\ast|,\ \forall\, X^j_\ast\in {\mathfrak C}^c_{1/2},\ \forall\, Y^j_\ast\in {\mathfrak C}^{\prime\, c}_{1/2}.
\eeq

On the other hand, since ${\mathfrak C}\cap {\mathfrak C}'=\emptyset$, we clearly have
\beq
\label{te2}
|X^j-X^{j-1}_\ast|\le |X^j_\ast-Y^j_\ast|,\ \forall\, X^j_\ast\in {\mathfrak C},\ \forall\, Y^j_\ast\in {\mathfrak C}^{'}.
\eeq

By \eqref{te1} and \eqref{te2}, we have
\beq
\label{te3}
|X^j_\ast-Y^j_\ast|\ge C(|X^j-X^{j-1}_\ast|+|X^{j-1}_\ast-Y^j_\ast|),\ \forall\, X^j_\ast\in {\mathfrak C}^c_{1/2}, \ \forall\, Y^j_\ast\in {\mathfrak C}'{}^c_{1/2}.
\eeq
If $Z$ is the orthogonal projection of $X^{j-1}_\ast$ on the $j$-dimensional affine plane $\Pi$ spanned by ${\mathfrak C}'$, then \eqref{te3} leads to
\beq
\label{te4}
|X^j_\ast-Y^j_\ast|\ge C(|X^j-X^{j-1}_\ast|+|Z-Y^j_\ast|),\ \forall\, X^j_\ast\in {\mathfrak C}^c_{1/2}, \ \forall\, Y^j_\ast\in {\mathfrak C}^{\prime\, c}_{1/2}.
\eeq

Using \eqref{te4}, we find that
\be*
\int\limits_{{\mathfrak C}^{\prime{}\, c}_{1/2}}dY^j_\ast\, \frac 1{|X^j_\ast-Y^j_\ast|^{a}}\le C\, \int\limits_{\Pi}dY^j_\ast\, \frac 1{[|X^j-X^{j-1}_\ast|+|Z-Y^j_\ast|]^a}=C\frac 1{|X^j-X^{j-1}_\ast|^{a-j}}.
\qedhere
\ee*
\end{proof}
\medskip
\noindent
{\bf Step 5 completed.} Using Lemma \ref{tc2} and a change of variables as in \eqref{td1}, we obtain that the left-hand side $J$ of \eqref{tc1} satisfies, when $0<\mu_0<1/2$, 
\be*
\begin{aligned}
J&\le C \int\limits_{{\mathfrak C}_{\mu, \mu_0}}dX^j_\ast\, \frac{|g_\mu(X^{j}_\ast)-g(X^{j-1}_\ast)|^p}{|X^j_\ast-X^{j-1}_\ast|^{sp}}
=C\, (1-\mu)^{j-sp}\int\limits_{{\mathfrak C}^c_{(\mu_0-\mu)/(1-\mu)}}dX^j_\ast\,\frac{|g(X^{j}_\ast)-g(X^{j-1}_\ast)|^p}{[|X^j_\ast-X^{j-1}_\ast|+\mu/(1-\mu)]^{sp}}\\
&\le C\, \int\limits_{{\mathfrak C}^c_{2\mu_0}}dX^j_\ast\,\frac{|g(X^{j}_\ast)-g(X^{j-1}_\ast)|^p}{|X^j_\ast-X^{j-1}_\ast|^{sp}}\to 0\text{ as }\mu_0\to 0, 
\end{aligned}
\ee*
(here, we use the fact that $g\in {\cal W}^{s,p}_j$) and thus \eqref{tc1} holds.

Step 5 and the proof of Lemma \ref{ug1} are complete.
\end{proof}
\section{ Continuity of the map $g\mapsto h$. Proof of Theorem \ref{thme}}
\l{appe}

Let ${\cal C}$ be a finite submesh of ${\cal C}_n$ and let $j\in\llbracket 0, n-1\rrbracket$. Let $g:{\mathcal C}_j\cap {\cal C}\to \R^m$ and let $h$ be its $j$-piecewise homogeneous extension to $\cal C$.

The main result in this section is the following

\begin{lemm}
\label{cont1} Let $0<s<1$, $1\le p<\infty$ be such that $sp<j+1$. Then we have
\beq
\label{e1}
\|h\|_{L^p(\mc)}^p\le C \|g\|_{L^p(\mc_j\cap\mc)}^p
\eeq
and
\beq
\label{e2}
|h|_{W^{s,p}(\mc)}^p\le C |g|_{W^{s,p}(\mc_j\cap\mc)}^p.
\eeq
[Here, $\|\ \|_{L^p(\mc)}$ and $|\ |_{W^{s,p}(\mc)}$ are naturally defined, and we allow constants depending on $\mc$.]

Equivalently, the map $W^{s,p}(\mc_j\cap\mc)\ni g\mapsto h\in W^{s,p}(\mc)$ is continuous.
\end{lemm}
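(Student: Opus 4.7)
The plan is to follow the interpolation strategy outlined at the start of Section \ref{appb} for the \enquote{short proof} of Theorem \ref{thmd}. Since $T\colon g\mapsto h$ is linear, I will establish the two endpoint estimates
\[
\|h\|_{L^q(\mc)}\le C\,\|g\|_{L^q(\mc_j\cap\mc)}\qquad(1\le q<\infty)
\]
and
\[
\|h\|_{W^{1,r}(\mc)}\le C\,\|g\|_{W^{1,r}(\mc_j\cap\mc)}\qquad(1\le r<j+1),
\]
and then interpolate. The $L^q$ bound with $q=p$ will give \eqref{e1} directly, and the interpolation will produce the continuity of $T\colon W^{s,p}(\mc_j\cap\mc)\to W^{s,p}(\mc)$, which is equivalent to \eqref{e1} and \eqref{e2} taken together.

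For the $L^q$ bound, I will work on one $n$-cube $Q$ of $\mc$ at a time. Partitioning $Q$ into the sectors $Q_{\ve,q,\sigma}$ of \eqref{part} and parametrizing each sector by $(t,\omega)$ as in \eqref{defXn}, the Jacobian equals $\ve^n\,t_1^{n-1}t_2^{n-2}\cdots t_{n-j}^{j}$, whose $t$-integral over $[0,1]^{n-j}$ is a finite positive constant. Since $h(X^n)=g(X^j)$ depends only on $(\omega,q,\sigma)$, and since $\omega\mapsto X^j$ is the affine rescaling of a $j$-face by the factor $\ve$ (see \eqref{defXj}), integrating $|h|^q$ and undoing this rescaling will give $\int_Q|h|^q\le C\int_{\mc_j\cap Q}|g|^q$; summing over cubes yields the $L^q$ bound.

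For the $W^{1,r}$ bound, the key input is the pointwise chain-rule estimate
\[
|\nabla h(X^n)|\le C\,\frac{\ve}{(X^n)_{n-j}}\,|\nabla g(X^j_\ast)|\quad\text{on each sector }Q_{\ve,q,\sigma},
\]
derived just as in \eqref{yb7} of Step 2 of the short proof of Theorem \ref{thmd}; here $(X^n)_{n-j}$ denotes the $(n-j)$-th largest absolute value of the coordinates of $X^n$ relative to the center of $Q$, and $\nabla g$ is the tangential gradient on the $j$-face containing $X^j_\ast$. Raising to the $r$-th power and integrating via the same change of variables, the resulting factor $[(X^n)_{n-j}]^{j-r}$ is integrable precisely because $r<j+1$; this produces $\int_Q|\nabla h|^r\le C\int_{\mc_j\cap Q}|\nabla g|^r$ after undoing the rescaling. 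Exactly as in the short proof, Lemma \ref{yb5} will be invoked to identify the a.e.\ gradient of $h$ off the $(n-j-1)$-dimensional dual skeleton with the Sobolev gradient of $h$ on $Q$.

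To conclude, given $0<s<1$, $1\le p<\infty$ with $sp<j+1$, I will pick $1<r<j+1$ and $1<q<\infty$ with $1/p=s/r+(1-s)/q$, which is possible exactly when $sp<j+1$ (cf.\ Step 3 of the short proof). The interpolation identity $[W^{1,r},L^q]_{s,p}=W^{s,p}$ applied on both $\mc$ and $\mc_j\cap\mc$, combined with the operator interpolation bound \eqref{aaa3} for $T$, will give the required continuity, hence \eqref{e2}. The main obstacle will be the $W^{1,r}$ estimate, where one must identify $\nabla g$ as a tangential gradient on the appropriate $j$-face, sum cleanly over the sectors $Q_{\ve,q,\sigma}$ covering $Q$, and verify that $r<j+1$ is exactly the integrability threshold near the dual skeleton. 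The interpolation step itself will be routine, modulo the interpretation of the interpolation identity on the piecewise-smooth source $\mc_j\cap\mc$, which is tractable because its $W^{s,p}$-norm is defined by a single integral over the whole skeleton rather than face by face.
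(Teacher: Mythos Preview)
Your route is genuinely different from the paper's. The paper proves \eqref{e2} by a direct computation: it writes $|h|_{W^{s,p}(\mc)}^p$ as a sum over pairs of sectors, isolates a kernel $k^\flat_{L,M}(\omega,\lambda)$ of exactly the type appearing in Step~3.1 of Lemma~\ref{lemb1}, and bounds it pointwise by $C/|X^j_\ast-Y^j_\ast|^{j+sp}$ via a short Lemma~\ref{leme2} (which simply packages Lemmas~\ref{lemb2} and~\ref{lemb4} together with a trivial estimate for far-apart cubes). This yields the seminorm inequality \eqref{e2} itself, not merely continuity. The paper acknowledges (opening of Section~\ref{appa}) that the interpolation strategy could serve here too; your gain is that the endpoint estimates are short, while the paper's gain is that it recycles kernel bounds already proved and delivers the sharper homogeneous estimate directly.

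There is, however, a step you pass over too quickly. On the target side $[W^{1,r}(\mc),L^q(\mc)]_{s,p}=W^{s,p}(\mc)$ is standard, since $\mc$ is a Lipschitz domain. On the source side you need $W^{s,p}(\mc_j\cap\mc)\hookrightarrow[W^{1,r}(\mc_j\cap\mc),L^q(\mc_j\cap\mc)]_{s,p}$, with the \emph{global} seminorm \eqref{fgz1} on the left. Your stated reason---that the norm is a single integral over the whole skeleton---cuts the wrong way: a face-by-face norm would make the interpolation a direct sum of Euclidean cases, whereas the global norm couples distinct faces, and that coupling must be reproduced by the $K$-functional splitting. Moreover, the $W^{1,r}$ endpoint forces you to take $W^{1,r}(\mc_j\cap\mc)$ with trace compatibility across $(j-1)$-faces: if $g_0$ jumps there, the homogeneous extension $Tg_0$ jumps across an $(n-1)$-dimensional set inside $\mc$ and is not in $W^{1,r}(\mc)$. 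Producing compatible splittings $g=g_0+g_1$ with the correct $K$-functional bound on a branching $j$-complex is not covered by \eqref{aaa1} and needs a separate argument (local bi-Lipschitz flattening near the lower skeleta, or an extension--restriction device). Once this is supplied your argument gives continuity of $g\mapsto h$, which is what Theorem~\ref{thme} actually uses; note though that it does not by itself give the pure seminorm bound \eqref{e2}.
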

\begin{proof} 
We may assume that  $T=0$. We use the notation in Section \ref{appb}. In particular, $\omega$ and $\lambda$ will be points in $\R^j$.

\medskip
\noin{\bf Step 1.} Estimate of $\|h\|_{L^p({\cal C})}^p$\\
As in Step 3.1  in  the proof of Lemma \ref{lemb1} (more precisely, by mimicking the derivation of \eqref{equa} with the help of \eqref{defXn}), we have 
\be*
\label{e3}
\|h\|_{L^p({\cal C})}^p=\sum^\circ_{\shortstack{$\scriptstyle \sigma\in S_{n-j,n}$ \\  $\scriptstyle q\in \{-1, 1\}^{n-j}$\\ $\scriptstyle L\in\Z^n$}}\ \ \int\limits_{|\omega|\le 1}d\omega\, k^\sharp(\omega)\,|g(2\ve L+X^j)|^p,
\ee*
where $\d\sum^\circ$ denotes a sum taken only over the $L$'s such that $Q_\ve+2\ve L\in\mc$ and
\be*
\label{e4}
k^\sharp(\omega)=\ve^n\int\limits_{0\leq t\leq 1}
 dt\, t^{n-1}_{1} \ldots  t^j_{n-j} =C(n, j)\, \ve^n.
\ee*
Thus
\be*
\label{e5}
\|h\|_{L^p({\cal C})}^p=C(n, j)\, \ve^n\sum^\circ_{\shortstack{$\scriptstyle \sigma\in S_{n-j,n}$ \\  $\scriptstyle q\in \{-1, 1\}^{n-j}$\\ $L\in\Z^n$}}\ \ \int\limits_{|\omega|\le 1}d\omega\, |g(2\ve L+X^j)|^p  \le C(n, j)\, \ve^{n-j}\int\limits_{\mc_j\cap\mc}|g|^p .
\ee*

\medskip
\noin{\bf Step 2.} Estimate of $|h|_{W^{s,p}({\cal C})}^p$\\
In the spirit of Step 1 above, we have
\be*
\label{e6}
|h|_{W^{s,p}({\cal C})}^p=\sum^\ast_{\shortstack{$\scriptstyle \sigma, \tau\in S_{n-j,n}$ \\  $\scriptstyle q, r\in \{-1, 1\}^{n-j}$\\ $\scriptstyle L, M\in\Z^n$}}\ \ \int\limits_{|\omega|\le 1}d\omega\ \int\limits_{|\lambda|\le 1}d\lambda\, k^\flat_{L, M} (\omega, \lambda)\,|g(2\ve L+X^j)-g(2\ve M+Y^j)|^p,
\ee*
where
$\d\sum^\ast$ denotes a sum taken only over the $L$'s and $M$'s  such that $Q_\ve+2\ve L\in\mc$ and $Q_\ve+2\ve M\in\mc$, 
and we set
\be*
\label{e7}
k^\flat_{L, M}(\omega, \lambda)=\int\limits_{0\leq t\leq 1}dt\ \ \int\limits_{0\leq u\leq 1}du\, 
 t^{n-1}_{1} \ldots  t^j_{n-j} u^{n-1}_{1} \ldots  u^j_{n-j}\, \frac {\ve^{2n}}{|(2\ve L+X^n)-(2\ve M+Y^n)|^{n+sp}}.
\ee*

We rely on the following variant of Lemma \ref{lemb4}.
\begin{lemm} 
\l{leme2} 
Assume that $sp<j+1$. Then
\beq
\label{e8}
k^\flat_{L, M}(\omega, \lambda)\le  \frac{C({\cal C})}{|(2\ve L+X^j) - (2\ve M+ Y^j)|^{j+sp}}.
\eeq
\end{lemm}
\bpr
When $L=M$, the conclusion is given by Lemma \ref{lemb2}. When $|L-M|=1$, this is Lemma \ref{lemb4}. Finaly, when $|L-M|\ge 2$, both sides of \eqref{e8} are bounded from above and from below, with finite positive bounds depending on $\cal C$  (and thus on $\ve$) but independent of $L$, $M$, $X^n$ and $Y^n$.
\epr

\smallskip
\noindent
{\bf Step 2 completed.}
Using Lemma \ref{leme2}, we find that
\be*
\label{e9}
\begin{aligned}
|h|_{W^{s,p}({\cal C})}^p\le & C({\cal C})\sum_{\shortstack{$\scriptstyle \sigma, \tau\in S_{n-j,n}$ \\  $\scriptstyle q, r\in \{-1, 1\}^{n-j}$\\ $\scriptstyle L, M\in\Z^n$}}\ \ \int\limits_{|\omega|\le 1}d\omega\ \int\limits_{|\lambda|\le 1}d\lambda\,  \frac {|g(2\ve L+X^j)-g(2\ve M+Y^j)|^p}{|(2\ve L+X^j)-(2\ve M+Y^j)|^{j+sp}}\\
\le & C({\cal C})\, |g|_{W^{s,p}(\mc_j\cap\mc)}^p.\hskip 115mm\qedhere
\end{aligned}
\ee*
\end{proof} 
We end with the
\begin{proof}[Proof of Theorem \ref{thme}] Theorem \ref{thme} is  a straightforward consequence of Corollary \ref{c1aa}, Lemma \ref{ua1} and Lemma \ref{cont1}.
\end{proof}

\vskip 1cm
\noindent
$^{(1)}$ Department of Mathematics, Rutgers University, 
Hill Center, Busch Campus, 110 Frelinghuysen Rd., Piscataway, NJ 08854, USA\\
brezis@math.rutgers.edu

\bigskip
\noindent
$^{(2)}$ 
Mathematics Department, 
Technion - Israel Institute of Technology, Haifa, 32000, 
Israel
\\
brezis@tx.technion.ac.il

\bigskip
\noindent
$^{(3)}$ 
Universit\'e de Lyon;
Universit\'e Lyon 1;
CNRS UMR 5208 Institut Camille Jordan;
43, boulevard du 11 novembre 1918,
F-69622 Villeurbanne Cedex, France
\\
mironescu@math.univ-lyon1.fr

\end{document}